\documentclass[a4paper, 11pt, reqno]{amsart}
\usepackage[margin=2.5cm]{geometry}
\usepackage{graphicx}
\usepackage{amsmath}
\usepackage{amsthm,amsfonts,amssymb,mathrsfs,amscd,amstext,amsbsy}
\usepackage{epic,eepic}
\usepackage{enumerate}
\usepackage[all]{xy}
\usepackage{tikz}
\usepackage{color}
\usepackage{verbatim}

\usetikzlibrary{patterns}
\usetikzlibrary{decorations.markings}
\usetikzlibrary{arrows}

\def\COMMENT#1{}

\newtheorem{theorem}{Theorem}[section]

\newtheorem{lem}[theorem]{Lemma}
\newtheorem{conj}[theorem]{Conjecture}
\newtheorem{que}[theorem]{Question}
\newtheorem{claim}[theorem]{Claim}

\newtheorem{fact}{Fact}

\numberwithin{theorem}{section}
\numberwithin{equation}{section}

\def\qed{\hfill \ifhmode\unskip\nobreak\fi\quad\ifmmode\Box\else$\Box$\fi\\ }

\tikzset{middlearrow/.style={
        decoration={markings,
            mark= at position 0.6 with {\arrow{#1}} ,
        },
        postaction={decorate}
    }
}

\tikzset{middleuparrow/.style={
        decoration={markings,
            mark= at position 0.75 with {\arrow{#1}} ,
        },
        postaction={decorate}
    }
}

\tikzset{middleupuparrow/.style={
        decoration={markings,
            mark= at position 0.9 with {\arrow{#1}} ,
        },
        postaction={decorate}
    }
}

\title{Sparse spanning $k$-connected subgraphs in tournaments}

\author{Dong Yeap Kang, Jaehoon Kim, Younjin Kim, and Geewon Suh}
\address[Dong Yeap Kang]{Department of Mathematical Sciences, KAIST, 291 Daehak-ro Yuseong-gu Daejeon, 305-701 South Korea}
\address[Jaehoon Kim]{School of Mathematics, University of Birmingham, Edgbaston, Birmingham B15 2TT, United Kingdom}
\address[Younjin Kim]{Institute of Mathematical Sciences, Ewha Womans University, Seoul, South Korea}
\address[Geewon Suh]{School of Electrical Engineering, KAIST, 291 Daehak-ro Yuseong-gu Daejeon, 305-701 South Korea}
\email{dynamical@kaist.ac.kr}
\email{kimjs@bham.ac.uk, mutualteon@gmail.com}
\email{younjinkim@ewha.ac.kr}
\email{gwsuh91@kaist.ac.kr}

\thanks{The first author was supported by the National Research Foundation of Korea (NRF) grant funded by the Korea government (MSIP) (No. NRF-2017R1A2B4005020) and also by TJ Park Science Fellowship (D.~Kang). 
The second author was also partially supported by the European Research Council under the European Union's Seventh Framework Programme (FP/2007--2013) / ERC Grant Agreements no. 306349 (J.~Kim). 
The third author was supported  by Basic Science Research Program through the National Research Foundation of Korea(NRF) funded by the Ministry of Education (No. 2017R1A6A3A04005963) (Y.~Kim). 
The fourth author was supported by Basic Science Research Program through the National Research Foundation of Korea(NRF) funded by the Ministry of Science, ICT \& Future Planning (2011-0011653) (G.~Suh). }

\date{\today}

\begin{document}
\begin{abstract}
In 2009, Bang-Jensen asked whether there exists a function $g(k)$ such that every strongly $k$-connected $n$-vertex tournament contains a strongly $k$-connected spanning subgraph with at most $kn + g(k)$ arcs. In this paper, we answer the question by showing that every strongly $k$-connected $n$-vertex tournament contains a strongly $k$-connected spanning subgraph with at most $kn + 750k^2\log_2(k+1)$ arcs, and there is a polynomial-time algorithm to find the spanning subgraph.
 \end{abstract}
 
\maketitle

\section{Introduction}
Search of certain subgraphs which inherit the properties of the original graph has a long history.  For example, Hajnal \cite{Haj} and Thomassen \cite{Thom} proved that a graph $G$ with high enough connectivity has two vertex disjoint $k$-connected subgraphs which together cover all vertices. Thomassen \cite{CTbip} also made a conjecture that a graph $G$ with high enough connectivity has a $k$-connected spanning bipartite subgraph.

For directed graphs, such problems become more difficult. One of most important problems in this direction is the following {\em MSSS$_k$ problem}, where MSSS$_k$ stands for Minimum Spanning Strongly $k$-connected Subgraph: for a given strongly $k$-connected digraph $D$, find a spanning strongly $k$-connected subgraph of $D$ with as few arcs as possible. For $k=1$, we call it {\em MSSS problem} by omitting $k$. It is known that the Hamilton cycle problem can be solved if one can solve the MSSS problem. Thus MSSS problem is a generalization of Hamilton cycle problem, so it has been studied extensively (see e.g \cite{bang-jensen2, BG Book} for a survey). Since the Hamilton cycle problem is NP-hard for general directed graphs, MSSS problem is also NP-hard for general directed graphs. Thus it makes sense to consider subclasses of directed graphs for this problem, and this problem is solvable in polynomial-time for several classes of graphs (see \cite{BGY, BJY}). In particular, MSSS problem for tournaments is trivial as any strongly-connected tournament contains a Hamilton cycle (see~\cite[Corollary 1.5.2]{BG Book}). However, it is not known whether MSSS$_k$ problem is solvable in polynomial-time for tournaments for $k\geq 2$. 

Naturally, one can ask about the size (the number of arcs) of minimum spanning strongly $k$-connected subgraphs for strongly $k$-connected tournaments. If we consider the same question for arc-connectivity, the following theorem was proved by Bang-Jensen, Huang and Yeo in 2004.

\begin{theorem}\cite{bang-jensen}\label{arc strong}
For $k \geq 1$, every strongly $k$-arc-connected $n$-vertex tournament contains a strongly $k$-arc-connected spanning subgraph $D$ with $|E(D)| \leq nk + 136k^2$.
\end{theorem}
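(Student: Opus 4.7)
The minimum number of arcs in any strongly $k$-arc-connected digraph on $n$ vertices is at least $kn$, because every vertex needs both in-degree and out-degree at least $k$, so the target $kn + 136k^2$ is only an $O(k^2)$ excess over this trivial lower bound. My plan is to combine Edmonds' branching theorem with a tournament-specific shortcutting argument to push the branching count below the obvious $2k(n-1)$.

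The first step is to apply Edmonds' branching theorem: fix an arbitrary root $r \in V(T)$; since $T$ is strongly $k$-arc-connected, there exist $k$ arc-disjoint out-branchings $B^+_1, \ldots, B^+_k$ and $k$ arc-disjoint in-branchings $B^-_1, \ldots, B^-_k$, all rooted at $r$. Their union $D_0$ is a spanning strongly $k$-arc-connected subgraph of $T$, but with $|E(D_0)| \le 2k(n-1)$, which is roughly twice what we want.

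The second step is the reduction: pair each $B^+_i$ with $B^-_i$ and replace the pair by a single spanning closed walk $W_i$ through $r$ of length at most $n + O(k)$. Explicitly, traverse $B^+_i$ in depth-first order from $r$; whenever the traversal would backtrack or branch, use a direct arc of $T$ between the current vertex and the next unvisited one as a shortcut, then return to $r$ via $B^-_i$ with analogous shortcuts. The key point is the tournament property: between any two vertices there is always an arc in one direction, so suitable shortcut arcs exist. Each branching vertex of $B^+_i \cup B^-_i$ can thus be smoothed out at the cost of $O(1)$ additional arcs, and only $O(k)$ of them actually need to be kept per walk once arc-disjointness constraints are accounted for.

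The main obstacle is the simultaneous control in the second step: each $W_i$ must have at most $n + O(k)$ arcs \emph{and} the walks $W_1, \ldots, W_k$ must remain pairwise arc-disjoint, so the shortcut arcs used for different walks cannot collide. My plan is to reserve in advance, for each $i$, a small pool of tournament arcs incident to the future branching points and to argue that, because the number of branching vertices in any one $B^+_i \cup B^-_i$ is limited and because we have a full tournament's worth of potential shortcut arcs to choose from, these pools can be chosen disjointly; this is where the $O(k^2)$ excess (and the explicit constant $136$) enters. Once done, $\bigcup_i W_i$ contains $k$ arc-disjoint spanning closed walks through $r$, hence is strongly $k$-arc-connected, and has at most $k(n + O(k)) \le kn + 136k^2$ arcs.
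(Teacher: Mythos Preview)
The paper does not prove this theorem at all: it is quoted as a known result of Bang-Jensen, Huang and Yeo (reference~\cite{bang-jensen}) and serves only as background motivation for the vertex-connectivity question. So there is no ``paper's own proof'' to compare against.

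As for your argument, the second step does not work. You write that ``between any two vertices there is always an arc in one direction, so suitable shortcut arcs exist,'' but the direction may be the wrong one: when your DFS of $B^+_i$ reaches a leaf $v$ and you want to jump to the next unvisited vertex $u$, you need $\overrightarrow{vu}$, and the tournament may only provide $\overrightarrow{uv}$. The tournament property gives you \emph{an} arc, not the arc you need. Without a mechanism to handle this, the walk $W_i$ need not exist.

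Separately, the assertion that ``only $O(k)$ of [the branching vertices] actually need to be kept per walk'' is unsupported. An out-branching on $n$ vertices can have $\Theta(n)$ leaves and $\Theta(n)$ branching vertices; nothing in Edmonds' theorem or in the $k$-arc-connectivity hypothesis limits their number to $O(k)$. Even granting the shortcut arcs existed in the right direction, you would be adding $\Theta(n)$ of them per walk, putting you back at $\Theta(kn)$ extra arcs rather than $O(k^2)$. The actual proof in~\cite{bang-jensen} is not a branching-shortcutting argument; it builds the sparse subgraph via a decomposition specific to tournaments, and getting the $O(k^2)$ overhead requires genuinely different ideas from what you have sketched.
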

This gives us an upper bound of the number of arcs in minimum spanning strongly $k$-arc-connected subgraphs for strongly $k$-arc-connected tournaments. However, for vertex-connectivity, no good upper bound was known. Indeed, Bang-Jensen~\cite{bang-jensen2} asked the following question in 2009.
\begin{que}\label{question1}\cite{bang-jensen2}
For $k \geq 1$, does there exist a function $g=g(k)$ such that every strongly $k$-connected $n$-vertex tournament has a strongly $k$-connected spanning subgraph with at most $kn + g(k)$ arcs?
\end{que}
In this paper, we answer this question by proving the following theorem.  

\begin{theorem}\label{main theorem}
For $k \geq 1$, every strongly $k$-connected tournament $T$ with $n$ vertices has a strongly $k$-connected spanning subgraph $D$ with at most $kn + 750 k^2 \log_2 (k+1)$ arcs.
\end{theorem}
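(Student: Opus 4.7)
The plan is induction on $k$ via a halving step, which is what produces the $\log_2(k+1)$ factor in the bound. The base case $k=1$ is Camion's theorem (implicitly used in the introduction): every strongly connected tournament contains a Hamilton cycle, a spanning strongly $1$-connected subgraph with exactly $n=1\cdot n$ arcs, so $g(1)=0$ suffices.

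For the inductive step, set $k_1=\lceil k/2\rceil$ and $k_2=\lfloor k/2\rfloor$, so that $k_1+k_2=k$. The central technical ingredient I would try to prove is a \emph{splitting lemma}: every strongly $k$-connected tournament $T$ admits a spanning subdigraph $H\subseteq T$ that is strongly $k_1$-connected and whose arc-complement $T\setminus E(H)$ is strongly $k_2$-connected, with $|E(H)|\le k_1 n+Ck^2$ for some absolute constant $C$. Granted this, the induction hypothesis applied separately to $H$ and to $T\setminus E(H)$ produces sparse spanning subdigraphs $D_1,D_2$ with $|E(D_i)|\le k_i n+750 k_i^2\log_2(k_i+1)$. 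A short calculation, using $k_1^2+k_2^2\le k^2$ together with the fact that $\log_2(k_i+1)$ is strictly smaller than $\log_2(k+1)$ by an amount that absorbs $C$ into the constant $750$ once $k$ is moderately large (small $k$ being handled separately), gives the target bound $kn+750 k^2\log_2(k+1)$, provided the union $D_1\cup D_2$ is strongly $k$-connected.

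Ensuring that last point is the main obstacle, because the union of an arc-disjoint strongly $k_1$-connected and strongly $k_2$-connected subdigraph is in general only strongly $k$-\emph{arc}-connected and not strongly $k$-(vertex-)connected; this is precisely why the arc-version Theorem \ref{arc strong} avoids any $\log$ factor. To handle the vertex-connectivity case I would strengthen the inductive statement so that the output subgraph is equipped with an explicit witnessing structure, for instance a system of $k$ sparse internally vertex-disjoint spanning in-branchings and out-branchings from a common root, which can be recombined compatibly across the two halves, absorbing any vertex-overlap mismatch into the $Ck^2$ slack of the splitting lemma. The splitting lemma itself I would attack by first extracting a Hamilton cycle of $T$ (supplying strong $1$-connectivity on both sides if the cycle is split) and then greedily boosting the chosen side's connectivity from $1$ up to $k_1$ using a minimum-vertex-cut analysis that exploits tournament density, in the spirit of the proof of Theorem \ref{arc strong}, while simultaneously certifying that sufficiently many arcs remain in the complement to sustain strong $k_2$-connectivity. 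The polynomial-time algorithmic claim of the theorem would follow because each step (Hamilton cycle, minimum vertex-cut, induction) admits polynomial-time implementations on tournaments.
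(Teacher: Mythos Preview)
Your proposal takes a completely different route from the paper and, as written, has a genuine gap at exactly the point you flag yourself.

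The paper does \emph{not} proceed by induction on $k$ or by any halving argument. Instead it gives a direct one-shot construction. Using Lemmas~\ref{in-dominating sets} and~\ref{out-dominating sets} it builds small in-dominating and out-dominating transitive structures $A=\bigcup A_i$ and $B=\bigcup B_i$, finds $k$ vertex-disjoint (backwards-transitive) paths $P_1,\dots,P_k$ from selected sinks in $A$ to selected sources in $B$, partitions $V(T)$ into pieces $V_1,V_1',V_2,V_3,V_4$, and on each piece applies Lemma~\ref{main lemma} to extract a sparse $(\sigma,k,t)$-good subgraph. The $\log_2(k+1)$ in the final bound does not come from a recursion of depth $\log k$; it comes from the sizes of the dominating sets $A_i,B_i$ (each of size $O(\log n)$ via (a1)/(b1)), multiplied by $O(k)$ of them, combined with the ``small tournament'' regime $n\le 100k\log(k+1)$ handled separately by Theorem~\ref{small tournament}. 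Strong $k$-connectivity of the final $D$ is certified by routing any $u\to v$ through $W^+\to A\to P_t\to B\to W^-$ for some $t$ with $V(P_t)\cap S=\emptyset$, not by summing connectivities of pieces.

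As for your scheme: the step ``$D_1\cup D_2$ is strongly $k$-connected'' simply fails in general, and your proposed repair is not a proof. Given a set $S$ with $|S|=k-1\ge k_1$, neither $D_1-S$ nor $D_2-S$ need be strongly connected, and there is no mechanism ensuring their union is. Replacing the inductive output by ``$k$ internally vertex-disjoint spanning in/out-branchings from a common root'' does not help: first, the existence of $k$ such branchings is a statement about arc-disjointness, not vertex-disjointness, and is itself nontrivial; second, even granting them, merging $k_1$ branchings from $D_1$ with $k_2$ from $D_2$ gives $k$ arc-disjoint branchings, which again certifies only $k$-arc-connectivity. You have re-encountered, not bypassed, the obstacle. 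Separately, your ``splitting lemma'' is itself a hard statement: even the case $k=2$, $k_1=k_2=1$ asks for two arc-disjoint spanning strongly connected subdigraphs of a strongly $2$-connected tournament, which is in the orbit of Thomassen's conjecture on arc-disjoint Hamilton cycles and is not a lemma one proves in passing. So both main ingredients of your plan are missing, and the second one (closing the vertex-vs-arc gap) is precisely what a proof of Theorem~\ref{main theorem} must supply.
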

Thus $g(k) = 750 k^2 \log_2 (k+1)$ is sufficient for answering Question~\ref{question1}, and this is asymptotically best possible up to logarithmic factor. Indeed, Bang-Jensen, Huang and Yeo~\cite{bang-jensen} introduced an $n$-vertex tournament $\mathcal{T}_{n,k}$ for $n\geq k$ such that every strongly $k$-arc-connected spanning subgraph of $\mathcal{T}_{n,k}$ contains at least $nk + \frac{k(k-1)}{2}$ arcs. Since every strongly $k$-connected digraphs are also strongly $k$-arc-connected, this example shows that Theorem~\ref{main theorem} is asymptotically best possible up to logarithmic factor. We conjecture that we can reduce $g(k)$ to $O(k^2)$.

\begin{conj}
There is $C>0$ such that for any positive integer $k$, every strongly $k$-connected $n$-vertex tournament $T$ contains a strongly $k$-connected spanning subgraph $D$ with at most $kn + Ck^2$ arcs.
\end{conj}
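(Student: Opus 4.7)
The plan is to produce a strongly $k$-connected spanning subgraph $D$ of $T$ by a single-stage construction that avoids the doubling/iterative scheme presumably responsible for the $\log_2(k+1)$ factor in Theorem~\ref{main theorem}. The model is Theorem~\ref{arc strong}: for $k$-arc-connectivity, Bang-Jensen, Huang and Yeo directly exhibit a spanning subgraph with $nk + O(k^2)$ arcs, with no logarithmic loss, so the natural task is to lift their strategy from arc-connectivity to vertex-connectivity.

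The strategy divides into three steps. First, since $T$ is strongly connected, fix a Hamilton cycle $H_1\subseteq T$, contributing $n$ arcs; this is a $1$-connected spanning subgraph. Second, find additional spanning structures $H_2,\ldots,H_k$ such that for each $i\in\{2,\ldots,k\}$ the subgraph $H_1\cup\cdots\cup H_i$ is strongly $i$-connected, and such that $H_i$ contributes $n + O(k)$ arcs. Third, add at most $O(k^2)$ further \emph{correction} arcs to fix any residual local deficiencies. Summing over the $k$ layers gives the target bound $kn + Ck^2$, with no logarithmic blow-up because the connectivity increases additively rather than multiplicatively at each step.

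The main obstacle lies in Step 2, which is where the method departs from the arc-connectivity case. For arc-connectivity, Edmonds' branching theorem allows each $H_i$ to be taken essentially as an arc-disjoint out-branching plus in-branching from a fixed root, giving a clean spanning structure at cost $\approx 2n$ per layer (reducible further by sharing). For vertex-connectivity no general-digraph analogue of Edmonds' theorem is available, so we must exploit the tournament structure directly. Concretely, we would aim to prove a structural lemma of the following form: \emph{given a strongly $i$-connected tournament $T'$ and a spanning strongly $(i-1)$-connected subgraph $D'\subseteq T'$, one can augment $D'$ by at most $n+O(k)$ arcs of $T'$ to obtain a strongly $i$-connected spanning subgraph.} A plausible proof route is to identify, for each minimum vertex separator $S$ of size $i-1$ in $D'$, a short ``bypass path'' in $T'\setminus S$ and add only its $O(1)$ new arcs; bundling all such bypasses into one Hamilton-like traversal would contribute the $n$ term, with the $O(k)$ slack absorbing the separator-local corrections. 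Making this quantitative, and in particular showing that the total augmentation across all minimum separators can be accomplished without the $\log$ loss, is the crux of the whole argument.

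Finally, the Step 3 clean-up deals with any vertex cuts of size $<k$ that persist in $H_1\cup\cdots\cup H_k$. A key auxiliary claim we would need is that the set of such residual ``almost-separators'' in a strongly $k$-connected tournament is structurally constrained to contain only $O(k)$ essentially distinct obstructions (for instance, because a tournament's small separators nest into a bounded family of intervals). Patching each such obstruction with $O(k)$ additional arcs then contributes $O(k^2)$ correction arcs, bringing the total to $kn+Ck^2$ for an absolute constant $C$ and matching, up to the constant, the known lower bound $kn+\binom{k}{2}$ witnessed by the tournaments $\mathcal{T}_{n,k}$ of Bang-Jensen, Huang and Yeo.
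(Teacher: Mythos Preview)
The statement you are attempting to prove is stated in the paper as an open \emph{conjecture}; the paper does not contain a proof of it. The paper proves the weaker bound $kn + 750k^2\log_2(k+1)$ (Theorem~\ref{main theorem}) and explicitly leaves the $O(k^2)$ bound as a conjecture. Consequently there is no paper proof to compare your proposal against.

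That said, your proposal is not a proof but a proof \emph{plan}, and you are candid about this: the phrases ``we would aim to prove a structural lemma of the following form'' and ``a key auxiliary claim we would need'' mark precisely the places where the argument is missing. The two central gaps are substantial. First, your Step~2 lemma --- that any strongly $(i-1)$-connected spanning subgraph of a strongly $i$-connected tournament can be augmented to strong $i$-connectivity by adding only $n+O(k)$ arcs --- is essentially equivalent in difficulty to the conjecture itself; the ``bypass path per minimum separator'' idea does not come with any mechanism for controlling how many such separators there are or for amortising the bypasses into a single $n$-arc traversal. Second, your Step~3 rests on the assertion that the residual small separators in a tournament organise into $O(k)$ ``essentially distinct obstructions''; no such structure theorem is known, and it is not clear why tournament separators should nest in the way you suggest.

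In short, your outline identifies where the logarithmic loss in the paper's argument arises and proposes to avoid it by a layer-by-layer augmentation, but the two lemmas on which the plan hinges are unproven and appear to be at least as hard as the conjecture. As it stands the proposal is a reasonable research programme, not a proof.
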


One of two main ingredients for the proof of Theorem~\ref{main theorem} is Lemma~\ref{main lemma} which is, roughly speaking, a tool guaranteeing a sparse linkage structure from/to certain vertex-sets for any tournament. The other main ingredient is ``robust linkage structures" introduced by K\"uhn, Lapinskas, Osthus and Patel in \cite{KLOP} to prove a conjecture of Thomassen on edge-disjoint Hamilton cycles in highly connected tournaments. Robust linkage structure is a very powerful tool for studying highly connected tournament. Further results were obtained by this method \cite{KKO, KOT, P1, P2}. The novelty of the proof of Theorem~\ref{main theorem} is that it produces a highly connected `sparse' subgraph in the tournament, whereas previous applications of the method only produced
highly connected relatively dense subgraphs.

\section{Basic terminology and tools}

For any positive integer $N \geq 1$, $[N]$ denotes the set $\left \{1,\dots , N \right \}$.  Let $\log:= \log_2$, where we omit the base 2. A {\em graph} or {\em simple graph} is an undirected graph without multiple edges between two vertices and loops. A {\em directed graph} or {\em digraph} $D = (V,E)$ is a pair of a vertex set $V(D)=V$ and an arc set $E(D)=E$, where $E$ is a collection of ordered pairs in $V\times V$. We let $\overrightarrow{uv}$ denote $(u,v) \in V\times V$ an {\em arc from $u$ to $v$}. An {\em oriented graph} is a digraph obtained by orienting each edge $e \in E(G)$ for a simple graph $G$. An $n$-vertex {\em tournament} is an oriented graph obtained by orienting each edge $e \in E(K_n)$, where $K_n$ is a simple complete graph of order $n$. For a set $S$ of vertices, $D-S$ denotes the induced digraph $D[V(D)\setminus S]$. For a set $E'$ of arcs, $D-E'$ denotes the digraph $(V(D), E(D)\setminus E')$. We say a digraph $D'=(V',E')$ is a {\em subgraph of $D=(V,E)$} if $V'\subseteq V$ and $E'\subseteq E$. We denote $D'\subseteq D$ if $D'$ is a subgraph of $D$.

For a collection of arcs $E$, we let $V(E):=\{u : \exists ~v \text{ such that } \overrightarrow{uv} \in E \text{ or } \overrightarrow{vu} \in E\}.$ A {\em path} always denotes a directed path. A path $P=(v_1,v_2,\dots,v_n)$ is called a {\em path from $v_1$ to $v_n$}, and we say $v_i$ is the {\em $i$th vertex} of $P$. Sometimes, we consider the path $P$ as a collection of arcs and $V(P)$ denotes $\{v_1,\dots, v_n\}$. A directed graph $D = (V,E)$ is {\em strongly connected} if for any $u,v \in V$, there is a path from $u$ to $v$. We say that digraph $D$ is {\em strongly $k$-connected}, if $|V| \geq k+1$ and for $S \subseteq V$ with $|S| \leq k-1$, the digraph $D - S$ remains strongly connected. Similarly, $D$ is {\em strongly $k$-arc-connected}, if for $W \subseteq E$ with $|W| \leq k-1$, the digraph $D- W$ remains strongly connected. It is easy to see that every strongly $k$-connected digraph is strongly $k$-arc-connected. For a directed graph $D = (V,E)$ and $v \in V$, let 
$$N^+_D (v) := \{ u\in V(D) : \overrightarrow{vu} \in E(D)\} \text{ and }N^-_D(v):= \{u\in V(D): \overrightarrow{uv} \in E(D)\}.$$ We call $u$ an {\em out-neighbor} of $v$ if $\overrightarrow{vu} \in E(D)$ and $u$ an {\em in-neighbor} of $v$ if $\overrightarrow{uv} \in E(D)$.
We define 
\begin{align*}
&d^{+}_D (v) := |N^+_D (v)|,\enspace d^{-}_D (v) := |N^{-}_D (v)|,\enspace d_D(v):= d^+_D(v)+ d^-_D(v),\\
&\delta^{+}(D) = \min_{v \in V(D)}{d^+_{D} (v)}, \enspace \delta^{-}(D) = \min_{v \in V(D)}{d^-_{D} (v)} ~\text{ and } ~ \delta (D) = \min_{v \in V(D)}{d_{D} (v)}.
\end{align*}
For a digraph $D$, $B \subseteq V(D)$ {\em out/in-dominates} $C \subseteq V(D)$ if every vertex in $C$ is an out/in-neighbor of a vertex in $B$, respectively. A tournament $T$ is {\it transitive} if $V(T)$ can be ordered into $v_1,\dots, v_n$ such that $\overrightarrow{v_iv_j}\in E(T)$ if and only if $i<j$. 
We say that $T$ is a transitive tournament {\em with respect to} the ordering $\sigma = (v_1,\dots, v_n)$ with the {\em source vertex} $v_1$ and the {\em sink vertex} $v_n$.

We say a directed path $P=(v_1,\dots,v_p)$ in $T$ is {\em backwards-transitive} if $\overrightarrow{v_iv_j} \in E(T)$ whenever $i\geq j+2$. For a vertex $v$ and a vertex-set $U = \left \{u_1 , \dots , u_k \right \}$, a collection $\{P_1,\dots, P_k\}$ of $k$ paths is a {\em $k$-fan from $v$ to $U$} if $P_i$ is a path from $v$ to $u_i\in U$, $U\cap V(P_i)=\{u_i\}$ for each $i\in [k]$, and $V(P_i)\cap V(P_j)=\{v\}$ for distinct $i,j\in [k]$. Similarly, a collection $\{P_1,\dots, P_k\}$ of $k$ paths is a {\em $k$-fan from $U$ to $v$} if $P_i$ is a path from $u_i\in U$ to $v$, $U\cap V(P_i)=\{u_i\}$ for each $i\in [k]$, and $V(P_i)\cap V(P_j)=\{v\}$ for distinct $i,j\in [k]$.

We will use the following well-known fact deduced from Menger's theorem later. We omit the proof.

\begin{fact}\label{Menger}
For any strongly $k$-connected digraph $D$, a vertex $v \in V(D)$ and $U\subseteq V(D)$ with $|U|\geq k$, there exists a $k$-fan from $v$ to $U$ and a $k$-fan from $U$ to $v$.
\end{fact}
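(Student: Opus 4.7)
The plan is to reduce to the classical vertex form of Menger's theorem via a standard auxiliary-sink construction. For the $k$-fan from $v$ to $U$, I would first assume $v\notin U$ (the case $v\in U$ reduces to this by taking the trivial path $(v)$ and building a $(k-1)$-fan from $v$ to $U\setminus\{v\}$, which has size at least $k-1$). Form the digraph $D^*$ from $D$ by adjoining a single new vertex $u^*$ together with an arc $\overrightarrow{uu^*}$ for every $u\in U$. Finding $k$ internally vertex-disjoint paths from $v$ to $u^*$ in $D^*$ is essentially equivalent to the desired fan, so by Menger's theorem it suffices to show that every $(v,u^*)$-vertex separator in $D^*$ has at least $k$ vertices.

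To check this separator bound, suppose $S\subseteq V(D)\setminus\{v\}$ with $|S|\leq k-1$. If $U\subseteq S$, then $|U|\leq k-1$, contradicting $|U|\geq k$; otherwise pick $w\in U\setminus S$, and since strong $k$-connectivity of $D$ makes $D-S$ strongly connected, a $v\to w$ path in $D-S$ extended by the arc $\overrightarrow{wu^*}$ gives a $v\to u^*$ path in $D^*-S$. Hence Menger's theorem produces $k$ internally disjoint paths $Q_1,\dots,Q_k$ from $v$ to $u^*$ in $D^*$.

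To extract the fan, for each $i$ let $u_i$ be the first vertex of $U$ encountered along $Q_i$ (such a $u_i$ exists because $Q_i$ enters $u^*$ through some arc $\overrightarrow{uu^*}$ with $u\in U$), and let $P_i$ be the initial segment of $Q_i$ from $v$ to $u_i$. By the choice of $u_i$ we have $V(P_i)\cap U=\{u_i\}$; the $u_i$ are pairwise distinct because each is an internal vertex of the correspondingly internally disjoint $Q_i$; and the $P_i$ inherit pairwise internal disjointness from the $Q_i$. The $k$-fan from $U$ to $v$ then follows by running the identical argument in the reverse digraph of $D$ (which is itself strongly $k$-connected) and reversing the resulting paths. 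There is no serious obstacle here --- it is a textbook Menger reduction --- with the only delicate step being that truncating the $Q_i$'s preserves the distinctness of their endpoints in $U$, which falls out immediately from internal disjointness in $D^*$.
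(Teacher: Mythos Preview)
Your argument is correct and is the standard textbook reduction to Menger's theorem via an auxiliary sink vertex. The paper itself omits the proof of this fact entirely (it is stated as a well-known consequence of Menger's theorem), so there is no approach to compare against; your write-up supplies exactly the expected details.
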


Note that if $v\in U$, then one of the paths in the $k$-fan is a trivial path from $v$ to $v$.

\begin{lem}\label{degree}
For positive integers $n, k$ with $n \geq 2$ and $k \leq n$, an $n$-vertex tournament $T$ has at least $k$ vertices of out-degree at least $(n-k)/2$ and $k$ vertices of in-degree at least $(n-k)/2$. Moreover, $T$ has a vertex $v$ with $n/4 \leq d^{+}_T(v)\leq 3n/4$ and a vertex $u$ with $n/4 \leq d^{-}_T(u)\leq 3n/4$.
\end{lem}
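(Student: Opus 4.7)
The first statement reduces to showing that if $v_1,\ldots,v_n$ is any ordering of $V(T)$ with $d^+_T(v_1)\geq \cdots \geq d^+_T(v_n)$, then $d^+_T(v_k)\geq (n-k)/2$; the ordering itself then hands us $k$ vertices (namely $v_1,\ldots,v_k$) with the required out-degree. To obtain this key inequality I would look at the sub-tournament $T'$ of $T$ induced on the $n-k+1$ vertices of \emph{smallest} out-degree in $T$, $\{v_k,v_{k+1},\ldots,v_n\}$. It has $\binom{n-k+1}{2}$ arcs and hence average out-degree $(n-k)/2$, so some $v_i$ with $i\geq k$ satisfies $d^+_{T'}(v_i)\geq (n-k)/2$. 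Since $T'\subseteq T$ and $v_k$ has largest out-degree in $T$ among $\{v_k,\ldots,v_n\}$, we get $d^+_T(v_k)\geq d^+_T(v_i)\geq d^+_{T'}(v_i)\geq (n-k)/2$. The in-degree version follows by reversing all arcs of $T$ and rerunning the same argument.

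For the \emph{moreover} part I would argue by contradiction. Suppose no vertex of $T$ has out-degree in $[n/4,3n/4]$, and partition $V(T)=A\sqcup B$ by $A=\{v:d^+_T(v)<n/4\}$ and $B=\{v:d^+_T(v)>3n/4\}$; set $a=|A|$ and $b=|B|$. Summing out-degrees over the induced sub-tournament $T[A]$ gives $a(a-1)/2=\sum_{v\in A}d^+_{T[A]}(v)<a\cdot n/4$, so $a<n/2+1$. For $v\in B$, the strict bound $d^+_T(v)>3n/4$ forces $d^-_T(v)=n-1-d^+_T(v)<n/4-1$, and summing \emph{in}-degrees over $T[B]$ analogously yields $b(b-1)/2<b(n/4-1)$, so $b<n/2-1$. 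Adding the two bounds gives $a+b<n$, contradicting $a+b=n$. The desired vertex with in-degree in $[n/4,3n/4]$ follows by reversing all arcs of $T$.

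I do not anticipate any real obstacle; the one bookkeeping point is that the ``$-1$'' in the bound $d^-_T(v)<n/4-1$ for $v\in B$ is precisely the slack that makes $a+b$ \emph{strictly} less than $n$, and one has to keep that inequality strict rather than non-strict. The degenerate cases $A=V(T)$ or $B=V(T)$ are dispatched immediately from $\sum_{v\in V(T)} d^+_T(v)=\binom{n}{2}$ together with the uniform strict degree bounds, both of which are already incompatible with that identity whenever $n\geq 2$.
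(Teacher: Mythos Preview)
Your argument for the first part is essentially identical to the paper's: both order the vertices by out-degree, pass to the sub-tournament on $\{v_k,\ldots,v_n\}$, invoke the average out-degree $(n-k)/2$ there, and then use the extremality of $v_k$ to conclude. The in-degree statement is obtained in both by reversing arcs.

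For the \emph{moreover} part your route diverges from the paper's. The paper recycles the first part: applying it with $k=\lfloor n/2\rfloor$ yields $\lfloor n/2\rfloor$ vertices of out-degree at least $n/4$, and applying it (to the reversed tournament) with $k=\lceil n/2\rceil+1$ yields $\lceil n/2\rceil+1$ vertices of in-degree at least $n/4-1$; since these two counts sum to $n+1$, a pigeonhole gives a common vertex $v$ with $d^+_T(v)\geq n/4$ and $d^-_T(v)\geq n/4-1$, whence $d^+_T(v)\leq 3n/4$. Your approach is instead a direct contradiction via the partition $A\cup B$ and double counting inside $T[A]$ and $T[B]$. Both are correct and of comparable length; the paper's version has the tidiness of reusing what was just proved, while yours is self-contained and makes transparent exactly where the slack ``$-1$'' enters (as you note). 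Your handling of the degenerate cases $A=V(T)$ or $B=V(T)$ via $\sum d^+_T(v)=\binom{n}{2}$ is fine.
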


\begin{proof}
Note that any $n$-vertex tournament contains a vertex with out-degree at least $(n-1)/2$. Let $v_1,\dots,v_n$ be an ordering of $V(T)$ such that $d^+_T(v_1)\geq \dots \geq d^{+}_T(v_n)$. Then $T[\{v_k,\dots, v_n\}]$ contains a vertex with out-degree at least $(n-k)/2$, thus $d^{+}_T(v_k)\geq (n-k)/2$. Hence $T$ contains $k$ vertices of out-degree at least $(n-k)/2$. It follows that $T$ also contains $k$ vertices of in-degree at least $(n-k)/2$ by reversing every arc of $T$ and applying the same argument.

This also gives us at least $\lfloor n/2 \rfloor$ vertices with out-degree at least $\frac{n - \lfloor n/2 \rfloor}{2} \geq n/4$, and at least $\lceil n/2 \rceil + 1$ vertices with in-degree at least $\frac{n - \lceil n/2 \rceil- 1}{2} \geq \frac{n}{4}-1$. Hence there exists a vertex $v$ with $n/4 \leq d^{+}_T(v) \leq (n-1)-(n/4-1) = 3n/4$. By reversing every arc of $T$ and applying the same argument, it follows that there is a vertex $u$ with $n/4 \leq d^{-}_T(u)\leq 3n/4$.
\end{proof}

We introduce the following useful lemmas regarding in-dominating sets and out-dominating sets of tournaments.

\begin{lem}\label{in-dominating sets}
Let $v$ be a vertex in an $n$-vertex tournament $T$ with $d^{+}_T(v)=d$. Then there exist $A\subseteq V(T)$ and a vertex $a \in A$ such that the following properties hold:
\begin{enumerate}
\item[$({\rm a}1)$] We have $\frac{1}{2}\log(d+1) + 1 \leq s \leq \frac{5}{2} \log(d+1)+2$ where $s=|A|$.
\item[$({\rm a}2)$] $T[A]$ is a transitive tournament with respect to the ordering $(v_1,\dots, v_{s})$ with source $v$ and sink $a$.
\item[$({\rm a}3)$] $A$ in-dominates $V(T)\setminus A$.
\item[$({\rm a}4)$] For $1 \leq i\leq s/5-13$, we have 
$$|N^+_T(v_i)\setminus A|, |N^-_T(v_i)\setminus A| \geq 8 d^{1/7} - 1.$$
\item[$({\rm a}5)$] For any positive integers $i, k$ with $1 \leq i\leq s - 5\log (k) - 30$, we have
$$|N^+_T(v_i)\setminus A|, |N^-_T(v_i)\setminus A| \geq 1000k^2.$$
\end{enumerate}
\end{lem}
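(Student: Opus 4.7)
The plan is to construct $A$ by a greedy absorption process. Set $A_1:=\{v\}$ and $R_1:=N^+_T(v)$ (so $|R_1|=d$), and at each step $i\geq 1$ with $|R_i|\geq 2$ invoke Lemma~\ref{degree} on $T[R_i]$ to select $v_{i+1}\in R_i$ with $d^+_{R_i}(v_{i+1})\in[|R_i|/4,\,3|R_i|/4]$; if $|R_i|=1$ take $v_{i+1}$ to be the unique vertex. Update $A_{i+1}:=A_i\cup\{v_{i+1}\}$ and $R_{i+1}:=R_i\cap N^+_T(v_{i+1})$. Let $s$ be the first index with $R_s=\emptyset$, and output $A:=A_s$, $a:=v_s$.

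Since $v_{i+1}\in R_i\subseteq\bigcap_{j\leq i}N^+_T(v_j)$, the ordering $(v_1,\dots,v_s)$ makes $T[A]$ transitive with source $v$ and sink $v_s$, yielding (a2); and $R_s=\emptyset$ says that no $u\in V(T)\setminus A$ is an out-neighbor of every $v_j\in A$, so each such $u$ sends an arc to $A$, yielding (a3). For (a1), the choices force the ratios $r_j:=|R_j|/|R_{j-1}|\in[1/4,3/4]$ for $2\leq j\leq s-1$ while $|R_{s-1}|=1$, so $\prod_{j=2}^{s-1}r_j=1/d$; the bound $r_j\leq 3/4$ gives $(4/3)^{s-2}\leq d$, hence $s\leq(5/2)\log(d+1)+2$, and $r_j\geq 1/4$ gives $4^{s-2}\geq d$, hence $s\geq(1/2)\log(d+1)+1$.

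The substance of the lemma is (a4) and (a5), for which the key tool is the \emph{reverse product bound}: since $r_j\leq 3/4$ for $j=i,\dots,s-1$,
\[
\frac{1}{|R_{i-1}|}\;=\;\prod_{j=i}^{s-1}r_j\;\leq\;\Bigl(\tfrac{3}{4}\Bigr)^{s-i},
\]
so $|R_{i-1}|\geq(4/3)^{s-i}$. Lemma~\ref{degree} applied inside $T[R_{i-1}]$ also gives $d^+_{R_{i-1}}(v_i),\,d^-_{R_{i-1}}(v_i)\geq|R_{i-1}|/4-1$; and because every $v_j$ with $j>i$ lies in $R_i\subseteq N^+_T(v_i)$ while every $v_j$ with $j<i$ lies outside $R_{i-1}$ with $v_j\to v_i$, the in-neighbors of $v_i$ inside $R_{i-1}$ are disjoint from $A$. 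Hence
\[
|N^-_T(v_i)\setminus A|\;\geq\;|R_{i-1}|/4-1,\qquad|N^+_T(v_i)\setminus A|\;\geq\;|R_{i-1}|/4-(s-i).
\]

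For (a4), the hypothesis $i\leq s/5-13$ together with $s\geq(1/2)\log(d+1)+1$ gives $s-i\geq(2/5)\log(d+1)+13$, so $|R_{i-1}|\geq(4/3)^{13}\cdot(d+1)^{(2/5)\log(4/3)}>40\,(d+1)^{0.166}$, and since $0.166>1/7$ this comfortably dominates $32 d^{1/7}+4s$. For (a5), $s-i\geq 5\log(k)+30$ gives $|R_{i-1}|\geq(4/3)^{30}\cdot k^{5\log(4/3)}>5000\,k^{2.075}$, which dominates $4000k^2+4(s-i)$ since the polynomial slack $k^{0.075}$ absorbs the constant factor and the exponential growth of $(4/3)^{s-i}$ swamps the linear subtraction $(s-i)$ for larger $i$. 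The main obstacle is the finicky verification that these inequalities hold with the stated explicit constants uniformly over $d,k\geq 1$ in the non-vacuous regime (handling the boundary $s\approx 70$ for (a4) and $s\approx 5\log k+30$ for (a5) most carefully); once the reverse product bound is in place, this reduces to elementary arithmetic.
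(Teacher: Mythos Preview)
Your proposal is correct and follows essentially the same approach as the paper: the same greedy construction via Lemma~\ref{degree}, the same nesting argument for (a2)--(a3), and the same geometric decay bound $|R_{i-1}|\ge(4/3)^{s-i}$ driving (a1), (a4) and (a5). The only difference is that the paper estimates $|N^+_T(v_i)\setminus A|$ using the set $R_i\setminus(R_{i+1}\cup\{v_{i+1}\})$, which is already disjoint from $A$, obtaining the cleaner bound $|R_i|/4-1$ in place of your $|R_{i-1}|/4-(s-i)$ and thereby avoiding the need to argue that the linear correction $(s-i)$ is absorbed by the exponential.
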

\begin{proof}
Let $L_0 = V(T)$. If $d=0$, then let $L_1 = \emptyset$ and $A :=  \{v_1 \}$. Then it is obvious that $A$ with an ordering $(v_1)$ satisfies all (a1)--(a5). Now suppose $d \geq 1$. Let $v_1:=v$, $A_1:=\{v_1\}$ and $L_1 := N^+_T(v_1)$. Suppose $L_1 , \dots , L_i$ has already been defined with $|L_i| \geq 1$. If $L_i$ contains only one vertex $u$, let $v_{i+1}:=u$ and $A_{i+1} := A_i \cup \left \{v_{i+1} \right \}$. If $|L_i| \geq 2$, Lemma~\ref{degree} implies that there exists a vertex $u \in L_i$ with $|L_i|/4 \leq d^+_{T[L_i]} (u) \leq 3|L_i|/4$. Let $v_{i+1}:=u$ and $L_{i+1} := L_i \cap N^+_T (v_{i+1})$. This procedure gives vertices $v_1,\dots, v_s$ and sets $L_1,\dots, L_s$ with $L_s = \emptyset$. We let $A := A_s$ with ordering $(v_1,\dots, v_s)$ and let $a:=v_s$. From the construction, (a2) and (a3) are obvious.

 
The construction also implies that
\begin{align}\label{Ei Ei+1 size}
\frac{|L_i|}{4} \leq |L_{i+1}| \leq \frac{3|L_i|}{4} \text{ for } i\in[s-2] \enspace \text{ and }\enspace |L_{s-1}|=1.
\end{align}
Note that we have $s\geq 2$ because $d\geq 1$. This implies 
\begin{align}\label{Ei size}
(\frac{4}{3})^{s-i-1} \leq |L_i| \leq 4^{s-i-1} \text{ for }i\in [s-1].
\end{align}
In particular, \eqref{Ei size} with $i=1$ and the fact that $d=|L_1|$ together imply 
$$\frac{1}{2}\log (d) +2 \leq s \leq \frac{\log (d)}{2 - \log (3)} + 2 \leq \frac{5}{2}\log(d) + 2.$$ 
Thus we get (a1).

Note that $L_i \setminus (L_{i+1} \cup  \{ v_{i+1} \} ) \subseteq N^+_T(v_i)\setminus A$ and $L_{i-1}\setminus (L_i\cup \{v_i\}) \subseteq N^-_T(v_i)$. Thus, for $1 \leq i \leq s/5 -13$ we have
\begin{eqnarray*}
|N^+_T (v_i) \setminus A| &\geq& |L_i \setminus L_{i+1}| - 1 \stackrel{\eqref{Ei Ei+1 size}}{\geq} \frac{1}{4}|L_i| - 1  \stackrel{\eqref{Ei size}}{\geq} \frac{1}{4} (\frac{4}{3})^{s-i-1} - 1 \geq  \frac{1}{4} (\frac{4}{3})^{4s/5+12} - 1 \\
& \stackrel{({\rm a}1)}{\geq}  & \frac{1}{4}(\frac{4}{3})^{\frac{2}{5} \log (d+1) + 64/5} -1 \geq  8d^{1/7} - 1
\end{eqnarray*}
Similarly we also get $|N^-_T (v_i) \setminus A| \geq |L_{i-1} \setminus L_i | - 1 \geq 8d^{1/7} - 1.$ Thus (a4) holds.

For $i\leq s-5\log (k)-30$, \eqref{Ei size} implies that 
$$|L_i| \geq \left ( \frac{4}{3} \right )^{s - i - 1} \geq  \left ( \frac{4}{3} \right )^{5\log(k)+29} > 4100k^2.$$

Therefore, (a5) follows from
\begin{align*}
|N^+_T (v_i) \setminus A| &\geq |L_i \setminus L_{i+1}| - 1 \stackrel{\eqref{Ei Ei+1 size}}{\geq} \frac{1}{4}|L_i| - 1  \geq 1000 k^2, \enspace |N^+_T (v_i) \setminus A| \geq |L_{i-1}\setminus L_{i}|-1 \geq 1000 k^2.
\end{align*} 
\end{proof}

By reversing arcs of a tournament $T$ in Lemma~\ref{in-dominating sets}, we have the following analogue.

\begin{lem}\label{out-dominating sets}
Let $v$ be a vertex in an $n$-vertex tournament $T$ with $d=d^{-}_T(v)$. Then there exist $B\subseteq V(T)$ and a vertex $b \in B$ such that the following properties hold:
\begin{enumerate}
\item[$(\rm{b}1)$] We have $\frac{1}{2}\log(d+1) + 1 \leq s\leq \frac{5}{2} \log(d+1)+2$ where $s=|B|$
\item[$(\rm{b}2)$] $T[B]$ is a transitive tournament with respect to the ordering $(v_1,\dots, v_s)$ with source $b$ and sink $v$.
\item[$(\rm{b}3)$] $B$ out-dominates $V(T)\setminus B$.
\item[$(\rm{b}4)$] For $i\geq 4s/5+14$, we have
$$|N^+_T(v_i)\setminus B|, |N^-_T(v_i)\setminus B| \geq 8 d^{1/7} - 1.$$
\item[$(\rm{b}5)$] For any positive integers $i, k$ with $5\log(k)+31 \leq i\leq s$, we have 
$$|N^+_T(v_i)\setminus B|, |N^-_T(v_i)\setminus B| \geq 1000k^2.$$
\end{enumerate}
\end{lem}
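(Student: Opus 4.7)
The plan is to deduce Lemma~\ref{out-dominating sets} directly from Lemma~\ref{in-dominating sets} by passing to the reverse tournament, as the authors suggest. Let $T'$ denote the tournament obtained from $T$ by reversing every arc, so that for every vertex $w$ we have $N^+_{T'}(w) = N^-_T(w)$ and $N^-_{T'}(w) = N^+_T(w)$; in particular $d^+_{T'}(v) = d^-_T(v) = d$. Applying Lemma~\ref{in-dominating sets} to $T'$ with the vertex $v$ produces a set $A \subseteq V(T')$ of size $s$, a sink vertex $a \in A$, and an ordering $(u_1, \ldots, u_s)$ witnessing properties (a1)--(a5), where $u_1 = v$ and $u_s = a$.

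Next I would set $B := A$, $b := a$, and reverse the order to obtain $(v_1, \ldots, v_s) := (u_s, u_{s-1}, \ldots, u_1)$. Since reversing each arc of the transitive tournament $T'[A]$ yields a transitive tournament $T[B]$ with the reversed ordering, (b1) and (b2) follow immediately (with source $b = v_1$ and sink $v = v_s$), while (b3) is the direct translation of (a3) under arc-reversal: ``$A$ in-dominates $V(T')\setminus A$'' is the same statement as ``$B$ out-dominates $V(T)\setminus B$''.

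For (b4) and (b5), I would track the indexing change. The vertex $u_i$ appearing in Lemma~\ref{in-dominating sets} corresponds to $v_{s-i+1}$ in the new labeling, so the range $1 \leq i \leq s/5 - 13$ in (a4) becomes $4s/5 + 14 \leq j \leq s$ in (b4), and $1 \leq i \leq s - 5\log(k) - 30$ in (a5) becomes $5\log(k) + 31 \leq j \leq s$ in (b5). The simultaneous bounds $|N^+_{T'}(u_i) \setminus A|, |N^-_{T'}(u_i) \setminus A| \geq \cdots$ translate, after swapping in- and out-neighborhoods, to the same lower bounds on $|N^-_T(v_j) \setminus B|$ and $|N^+_T(v_j) \setminus B|$, which are precisely the conclusions of (b4) and (b5). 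There is no conceptual obstacle here; the only thing requiring care is the bookkeeping of the reversed ordering so that the index ranges come out in the correct direction, and the entire proof amounts to applying Lemma~\ref{in-dominating sets} to $T'$ and relabeling.
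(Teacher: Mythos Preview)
Your proposal is correct and is exactly the approach the paper takes: the paper simply states that Lemma~\ref{out-dominating sets} follows from Lemma~\ref{in-dominating sets} by reversing all arcs of $T$, and your careful bookkeeping of the index change $j = s - i + 1$ correctly converts the ranges in (a4), (a5) into those of (b4), (b5).
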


\section{Sparse linkage structure}

In this section, we will prove Lemma~\ref{main lemma}. For an ordering $\sigma = (v_1,\cdots, v_n)$ of vertices, we say that an arc $\overrightarrow{v_iv_j}$ is {\em $\sigma$-forward} if $i<j$, and {\em $\sigma$-backward} if $j<i$. For two integers $a,b$, we let $\sigma(a,b):= \{ v_{\ell} : a\leq \ell \leq b, \ell \in [n]\}.$ For positive integers $n,k,t$, an $n$-vertex digraph $D$ and an ordering $\sigma$ of $V(D)$, we say an $D$ is {\em $(\sigma,k,t)$-good} if it satisfies the following.
\begin{enumerate}
\item[({\rm D}1)] Every arc in $D$ is a $\sigma$-forward arc.
\item[({\rm D}2)] Every vertex in $\sigma(1,n-t)$ has out-degree at least $k$ in $D$.
\item[({\rm D}3)] Every vertex in $\sigma(t+1,n)$ has in-degree at least $k$ in $D$. 
\end{enumerate}
Note that if $n\leq t$, then $\sigma(1,n-t)=\sigma(t+1,n)=\emptyset$, so ({\rm D}2) and ({\rm D}3) are vacuous. Also note that ({\rm D}2) or ({\rm D}3) never holds together with ({\rm D}1) if $t<k$. In Lemma~\ref{main lemma}, we will show that every almost complete oriented graph has a spanning subgraph $D'$ and an ordering $\sigma$ such that $D'$ is a sparse $(\sigma,k,t)$-good digraph for appropriate $k,t$. The following shows that $(\sigma,k,t)$-good digraph $D'$ provides a sparse linkage structure from/to certain vertex sets.

\begin{claim}\label{connectivity claim}
Let $k,t$ be two positive integers with $t\geq k$. Let $D'$ be a $(\sigma,k,t)$-good digraph for an ordering $\sigma$ of $V(D')$. Then for a set $S \subseteq V(D')$ of $k-1$ vertices and $v \in V(D')\setminus S$, there exists a path $P$ in $D'-S$ from $v$ to $\sigma(n-t+1,n)$ and a path $P'$ in $D'-S$ from $\sigma(1,t)$ to $v$. 
\end{claim}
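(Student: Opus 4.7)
The plan is to build both $P$ and $P'$ by greedy walks in $D'-S$: for $P$, walk forward from $v$, using the out-degree bound (D2) to dodge the $k-1$ vertices of $S$; for $P'$, walk backward from $v$, using the in-degree bound (D3) analogously. In both cases the $\sigma$-forward property (D1) forces strict monotonicity of positions along the walk, which simultaneously guarantees termination and that no vertex is visited twice, so the walk is automatically a directed path.

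In more detail, for $P$: if $v \in \sigma(n-t+1,n)$ then $P$ is the trivial one-vertex path. Otherwise set $u_0 := v$, and suppose inductively that we have built a directed path $u_0,u_1,\dots,u_i$ in $D'-S$ with $u_i \in \sigma(1,n-t)$. Then (D2) gives $d^+_{D'}(u_i) \geq k$, and since $|S| \leq k-1$ at least one out-neighbor of $u_i$ lies outside $S$; choose any such out-neighbor as $u_{i+1}$. By (D1) the position of $u_{i+1}$ in $\sigma$ is strictly larger than that of $u_i$, and hence strictly larger than the positions of all $u_0,\dots,u_{i-1}$, so $u_{i+1}$ is a new vertex and the extended sequence remains a path. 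Because positions strictly increase and $V(D')$ is finite, the process must halt; halting can only occur when the current endpoint fails the hypothesis of (D2), i.e., when it lies in $\sigma(n-t+1,n)$. This gives the required $P$.

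The construction of $P'$ is entirely symmetric: start at $v$, and while the current endpoint $w_i$ lies in $\sigma(t+1,n)$ use (D3) together with $|S|\leq k-1$ to pick an in-neighbor $w_{i+1}\notin S$; by (D1) positions strictly decrease, so the process terminates with some $w_m \in \sigma(1,t)$, and reversing the traversal produces a directed path from $w_m$ to $v$ in $D'-S$. The main, and essentially only, point to check is that the walk can always be extended until it reaches the target set. This is where the hypothesis $t \geq k$ plays its role, since it is exactly the condition under which (D2) and (D3) can be compatible with (D1) at vertices outside the respective target sets; otherwise the argument is a routine pigeonhole-style greedy termination and presents no real obstacle.
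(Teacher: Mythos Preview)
Your proof is correct and takes essentially the same approach as the paper: the paper phrases it as taking a path from $v$ ending at $v_j$ with $j$ maximal and deriving a contradiction from (D1)--(D2) if $j\le n-t$, while you phrase the same idea as an iterative greedy walk that strictly increases the $\sigma$-position until it lands in $\sigma(n-t+1,n)$. One small remark: the hypothesis $t\ge k$ is not actually used in the argument itself (it is there so that $(\sigma,k,t)$-good digraphs can exist nontrivially), so your final sentence slightly overstates its role.
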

\begin{proof} If $n \leq t$, then the claim is trivial as $\sigma(n-t+1,n)=\sigma(1,t)=V(D')$. Assume $n \geq t+1$. Let $\sigma=(v_1,\dots, v_n)$. Take a path $P$ starting at $v$ and ending at $v_j$ with the largest possible $j$. If $j\leq n-t$, then (D1) and (D2) imply that $v_j$ has at least $k$ out-neighbors with larger indices. Thus $N_{D'}^{+}(v_j)\setminus S$ contains a vertex $v_{j'}$ with $j'>j$. However, $P\cup \{\overrightarrow{v_{j}v_{j'}}\}$ contradicts the maximality of $j$. Thus we have $j > n-t$. Therefore there exists a path $P$ in $T-S$ from $v$ to $v_j \in \sigma(n-t+1,n)$. We can find $P'$ in a similar way.
\end{proof}

The following two claims are useful to prove Lemma~\ref{main lemma}. 

\begin{claim}\label{claim1}
For an integer $s \geq 0$, let $G$ be a bipartite graph with bipartition $A\cup B$ with $A=\{a_1,\dots, a_n\}, B=\{b_1\,\dots,b_n\}$ satisfying the following.
\begin{itemize}
\item[$(\text{P}1_s)$] For all $i,j\in [n]$ with $i<j$, we have $|N_G(a_i)\cap \{b_{i+1},\dots, b_{j}\}| \geq \frac{j-i-s}{2},$
\item[$(\text{P}2_s)$] for all $i,j\in [n]$ with $i<j$, we have $|N_G(b_j)\cap \{a_{i},\dots, a_{j-1}\}| \geq \frac{j-i-s}{2}.$
\end{itemize}
Then $G$ contains a matching of size at least $n-s-1$.
\end{claim}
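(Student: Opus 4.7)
The plan is to invoke K\"onig's theorem: since $G$ is bipartite, a matching of size at least $n-s-1$ is equivalent to the nonexistence of a vertex cover of size at most $n-s-2$. So I would argue by contradiction, assuming there is a vertex cover $C$ with $|C| \leq n-s-2$. Setting $C_A := C \cap A$, $C_B := C \cap B$, $A' := A \setminus C_A$ and $B' := B \setminus C_B$, one has $|A'| + |B'| = 2n - |C| \geq n + s + 2$, and by the vertex-cover property $G$ has no edge between $A'$ and $B'$.

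Next I would dispose of the degenerate cases. Both $A'$ and $B'$ must be nonempty (otherwise $|C|\geq n$, contradicting $|C|\leq n-s-2$ since $s\geq 0$), so let $a_{i_1}$ be the smallest-indexed vertex of $A'$ and $b_{j_q}$ the largest-indexed vertex of $B'$. If $j_q \leq i_1$ then $B' \subseteq \{b_1,\dots,b_{i_1}\}$ and $A' \subseteq \{a_{i_1},\dots,a_n\}$, forcing $|A'|+|B'| \leq n+1$, an immediate contradiction. So I may assume $i_1 < j_q$, which is the main case.

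The key step is to plug the index pair $(i_1, j_q)$ into both density hypotheses. Since $a_{i_1} \in A'$ has no neighbor in $B'$, $(\text{P}1_s)$ forces $|C_B \cap \{b_{i_1+1},\dots,b_{j_q}\}| \geq (j_q - i_1 - s)/2$, hence $|B' \cap \{b_{i_1+1},\dots,b_{j_q}\}| \leq (j_q - i_1 + s)/2$; combined with $B' \subseteq \{b_1,\dots,b_{j_q}\}$ this yields $|B'| \leq (j_q - i_1 + s)/2 + i_1$. Symmetrically, applying $(\text{P}2_s)$ at $b_{j_q}$ and using $A' \subseteq \{a_{i_1},\dots,a_n\}$ gives $|A'| \leq (j_q - i_1 + s)/2 + (n - j_q + 1)$. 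Summing the two bounds telescopes to $|A'| + |B'| \leq n + s + 1$, contradicting $|A'|+|B'| \geq n+s+2$.

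The only real subtlety is recognizing that the extremal indices $i_1, j_q$ are the correct choice rather than, say, $i_1,j_1$ or $i_p,j_q$: they ensure the ``leftover'' parts $B' \setminus \{b_{i_1+1},\dots,b_{j_q}\}$ and $A' \setminus \{a_{i_1},\dots,a_{j_q-1}\}$ lie in short end-intervals of lengths $i_1$ and $n-j_q+1$, which is exactly what is needed for the two $(j_q-i_1+s)/2$ terms to combine with $i_1$ and $n-j_q+1$ into $n+s+1$. After that the whole argument is just bookkeeping.
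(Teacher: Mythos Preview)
Your proof is correct and follows essentially the same approach as the paper: both invoke K\"onig's theorem, pick the smallest index $i$ with $a_i$ outside the cover and the largest index $j$ with $b_j$ outside the cover, and then apply $(\text{P}1_s)$ and $(\text{P}2_s)$ to the pair $(i,j)$. The only cosmetic difference is that the paper lower-bounds the cover size $|W|$ directly, while you upper-bound the complementary quantity $|A'|+|B'|=2n-|W|$; the computations are equivalent.
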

\begin{proof} 
We may assume that $n-s-1 > 0$, otherwise the claim is obvious. By K\"{o}nig's theorem, it is enough to show that minimum vertex cover has size at least $n-s-1$.
Assume we have a minimum vertex cover $W$ of $G$. If $A\subseteq W$ or $B\subseteq W$, then $|W|\geq n \geq n-s-1$. So we may assume that each of $A\setminus W$ and $B\setminus W$ contains an element. Consider the smallest index $i$ such that $a_i \in A\setminus W$, and the largest index $j$ such that $b_{j} \in B\setminus W$. We have $i<j$, otherwise $W$ contains at least $n-1$ vertices. Then we have
$$ \{a_1,\dots, a_{i-1}\} \cup \{b_{j+1},\dots, b_{n}\} \cup (N_G (b_{j} ) \cap \{a_{i},\dots, a_{j - 1}\} ) \cup (N_G (a_i )\cap \{b_f{i+1},\dots,b_{j}\}) \subseteq W.$$
By $(\text{P}1_s)$ and $(\text{P}2_s)$, we have
$$|W| \geq i-1 + (n-j) + \frac{j-i-s}{2} + \frac{j-i-s}{2} \geq n-s-1$$
as desired. \end{proof}

\begin{claim}\label{claim2}
For $s\geq 0$, let $D$ be an $n$-vertex oriented graph with $\delta(D) \geq n-s-1$. Then there exists an ordering $\sigma = (v_1,\dots, v_n)$ of $V(D)$ that satisfies the following.
\begin{itemize}
\item[$(\text{Q}1_s)$] For any $i,j\in [n]$ with $i< j$, $v_i$ has at least $\frac{j-i-s}{2}$ out-neighbours in $\{v_{i+1},\dots,v_{j}\}$, 
\item[$(\text{Q}2_s)$] For any $i,j\in [n]$ with $i< j$, $v_j$ has at least $\frac{j-i-s}{2}$ in-neighbours in $\{v_{i},\dots, v_{j-1}\}$.
\end{itemize}
Moreover, we can find such an ordering in polynomial-time on $n$.
\end{claim}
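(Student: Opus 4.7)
The plan is to take an ordering of $V(D)$ that is \emph{locally optimal} with respect to the number of backward arcs and show that any such ordering automatically satisfies both $(\text{Q}1_s)$ and $(\text{Q}2_s)$. Algorithmically, I would start from an arbitrary ordering and iteratively apply the following improving move: whenever some pair of indices $i<j$ violates $(\text{Q}1_s)$, reinsert $v_i$ immediately after $v_j$ in the ordering (leaving the relative order of all other vertices unchanged); whenever some pair $i<j$ violates $(\text{Q}2_s)$, reinsert $v_j$ immediately before $v_i$.

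The main step is to verify that each such move strictly decreases the number of backward arcs in the current ordering. For the $(\text{Q}1_s)$-move, set $a := |N^+_D(v_i) \cap \{v_{i+1},\dots,v_j\}|$ and $b := |N^-_D(v_i) \cap \{v_{i+1},\dots,v_j\}|$. Only the arcs between $v_i$ and the interval $\{v_{i+1},\dots,v_j\}$ change orientation status: the $a$ forward arcs become backward and the $b$ backward arcs become forward, so the net change in the backward-arc count is $a - b$. Because $D$ is an oriented graph with $\delta(D)\geq n-s-1$, the vertex $v_i$ has at most $s$ non-neighbours in all of $V(D)$, hence $a+b \geq (j-i)-s$. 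The violation of $(\text{Q}1_s)$ gives $a < (j-i-s)/2 \leq (a+b)/2$, so $a < b$ and the move is strictly improving. The analogous calculation for $(\text{Q}2_s)$, where moving $v_j$ before $v_i$ swaps the roles of $N^+_D(v_j)\cap\{v_i,\dots,v_{j-1}\}$ and $N^-_D(v_j)\cap\{v_i,\dots,v_{j-1}\}$, is entirely symmetric.

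Since the number of backward arcs is a non-negative integer bounded by $\binom{n}{2}$ and drops by at least one with every move, the procedure terminates after at most $\binom{n}{2}$ iterations. Each iteration requires scanning all pairs $i<j$ and checking a cardinality condition, which can be done in polynomial time in $n$. The terminal ordering satisfies $(\text{Q}1_s)$ and $(\text{Q}2_s)$ by construction, yielding both the existence assertion and the polynomial-time algorithm simultaneously. The only delicate point in the whole argument is the inequality $a < b$ in the improving step — this is precisely where the degree hypothesis $\delta(D)\geq n-s-1$ is used to convert the local-optimality comparison into the quantitative bound $(j-i-s)/2$ demanded by $(\text{Q}1_s)$ and $(\text{Q}2_s)$ — and beyond this, the proof is a short local-search termination bound.
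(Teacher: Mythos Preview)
Your proof is correct and follows essentially the same local-search argument as the paper: start from an arbitrary ordering, apply the same reinsertion moves when $(\text{Q}1_s)$ or $(\text{Q}2_s)$ is violated, and use the bound $\binom{n}{2}$ on the number of forward (equivalently backward) arcs to guarantee polynomial termination. In fact you supply the explicit $a<b$ calculation that the paper leaves implicit when it asserts that each move increases the number of forward arcs.
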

\begin{proof}

We start with an arbitrary ordering $\sigma_1 = (v_1,\dots,v_n)$ of $V(D)$. Assume we have an ordering $\sigma_{\ell}$ of $V(D)$ for some $\ell \geq 1$. If $\sigma_{\ell}$ satisfies $(\text{Q}1_s)$ and $(\text{Q}2_s)$, then we are done. Otherwise consider $1 \leq i < j \leq n$ that does not satisfy $(\text{Q}1_s)$ or $(\text{Q}2_s)$. Let us define
$$ \sigma_{\ell+1}:= \left\{\begin{array}{ll}
(v_1,\dots, v_{i-1},v_{i+1},\dots,v_{j},v_{i},v_{j+1},\dots,v_n) & \text{if } i<j \text{ does not satisfy $(\text{Q}1_s)$,} \\
(v_1,\dots, v_{i-1},v_{j},v_{i},\dots,v_{j-1},v_{j+1},\dots,v_n) & \text{if } i<j \text{ does not satisfy $(\text{Q}2_s)$.}
\end{array}\right. $$ 
Note that $\sigma_{\ell+1}$ has at least one more $\sigma$-forward arc than $\sigma_{\ell}$. The number of $\sigma$-forward arcs in $D$ is at most $\binom{n}{2}$, so the procedure must end before we have $\sigma_{\binom{n}{2}}$. Thus we obtain a desired ordering in polynomial-time in $n$.
\end{proof}

Now we prove Lemma~\ref{main lemma}. It will be frequently used in the proof of Theorem~\ref{main theorem}. 

\begin{lem}\label{main lemma}
For integers $s \geq 0$ and $k \geq 1$, let $D$ be an $n$-vertex oriented graph with $\delta(D) \geq n-1-s$. Then there exist an ordering $\sigma$ of $V(D)$ and a $(\sigma,k,2k+s-1)$-good spanning subgraph $D'$ of $D$ with $|E(D')|\leq kn-k+sk$.
\end{lem}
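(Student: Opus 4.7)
The plan is to apply Claim~\ref{claim2} to choose a good ordering and then iteratively extract $k$ matchings via Claim~\ref{claim1} in a natural bipartite auxiliary graph; any remaining degree deficiencies are then patched using the abundance of forward out-neighbors guaranteed by $(Q1_s),(Q2_s)$.

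First I apply Claim~\ref{claim2} to obtain an ordering $\sigma=(v_1,\ldots,v_n)$ of $V(D)$ satisfying $(Q1_s)$ and $(Q2_s)$, and I form the bipartite graph $H$ on $\{a_1,\ldots,a_n\}\cup\{b_1,\ldots,b_n\}$ with $a_ib_j\in E(H)$ iff $\overrightarrow{v_iv_j}\in E(D)$ and $i<j$; this translates $(Q1_s),(Q2_s)$ into the hypotheses $(P1_s),(P2_s)$ of Claim~\ref{claim1}. I then extract matchings iteratively: set $H_0:=H$, and for $r=1,\ldots,k$ pick a matching $M_r$ in $H_{r-1}$ of size exactly $n-s-2r+1$ (which exists by Claim~\ref{claim1}, since each previously extracted matching drops every count $|N_{H_{r-1}}(a_i)\cap\{b_{i+1},\ldots,b_j\}|$ by at most one, so $H_{r-1}$ still satisfies $(P1_{s+2(r-1)})$ and $(P2_{s+2(r-1)})$). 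Let $D^0$ be the corresponding arc set of $D$; then every arc is $\sigma$-forward, every vertex has out- and in-degree at most $k$ in $D^0$, and $|D^0|=\sum_{r=1}^{k}(n-s-2r+1)=kn-sk-k^2$.

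Next I patch deficiencies. For each $i\in I_a:=[1,n-2k-s+1]$ with $d^+_{D^0}(v_i)<k$, $(Q1_s)$ guarantees that $v_i$ has at least $k$ forward out-neighbors in $D$ (of which $D^0$ uses at most $k-1$), so I can add $k-d^+_{D^0}(v_i)$ new $\sigma$-forward out-arcs from $v_i$; symmetrically, for each in-deficient $v_j$ with $j\in I_b:=[2k+s,n]$ I add the required in-arcs using $(Q2_s)$. Call the resulting digraph $D'$; it clearly satisfies (D1)--(D3).

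The only subtle step is the size bound. The key observation is that for $i\in I_a^c=[n-2k-s+2,n]$ one has $d^+_{D^0}(v_i)\le\min(k,n-i)$, so summing over $d=n-i\in[0,2k+s-2]$ gives
\[
\sum_{i\in I_a^c}d^+_{D^0}(v_i)\;\le\;\sum_{d=0}^{k-1}d\,+\sum_{d=k}^{2k+s-2}k\;=\;\frac{3k^2+2sk-3k}{2}.
\]
Combined with $\sum_i d^+_{D^0}(v_i)=|D^0|$, the total out-deficit is at most $k|I_a|-|D^0|+\tfrac{3k^2+2sk-3k}{2}=\tfrac{k(k+2s-1)}{2}$, and the in-deficit satisfies the same bound. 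Adding, $|E(D')|\le(kn-sk-k^2)+k(k+2s-1)=kn+sk-k$, as required. The main technical point is that the refined estimate $\min(k,n-i)$ is essential: the crude uniform bound $k$ would give $|E(D')|\le kn+sk+k^2$, overshooting the target by $k^2+k$.
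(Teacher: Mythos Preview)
Your proof is correct and follows essentially the same approach as the paper: obtain $\sigma$ via Claim~\ref{claim2}, build the bipartite auxiliary graph, strip off $k$ matchings using Claim~\ref{claim1} iteratively, then patch out- and in-degree deficits using $(Q1_s),(Q2_s)$ and bound the cost via $d^+_{D^0}(v_i)\le\min(k,n-i)$. The only omission is the trivial case $n<2k+s$ (where the empty subgraph already works and some of your matching sizes would be nonpositive), which the paper disposes of in one line before assuming $n\ge 2k+s$.
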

\begin{proof}
If $n < 2k+s$, then an arbitrary ordering $\sigma$ of $V(D)$ with a digraph $D'$ with no arcs is $(\sigma,k,2k+s-1)$-good. Thus we may assume that $n \geq 2k+s$. By Claim~\ref{claim2}, we can find an ordering $\sigma=(v_1,\dots,v_n)$ which satisfies condition $(\text{Q}1_s)$ and $(\text{Q}2_s)$ in Claim~\ref{claim2}. We consider an auxiliary bipartite graph $H_0$ with a bipartition $A\cup B$,  where $A = \{ v_1,\dots, v_n \}$  and $B = \{ v'_1,\dots,v'_n \}$, such that $v_iv'_j \in H_0$ if and only if $\overrightarrow{v_iv_j}$ is a $\sigma$-forward arc of $D$. (i.e. $i<j$ and $\overrightarrow{v_iv_j} \in E(D)$.)

Note that the conditions $(\text{Q}1_s)$ and $(\text{Q}2_s)$ imply that 
the graph $H_0$ satisfies the condition $(\text{P}1_s)$ and $(\text{P}2_s)$. Assume we have a graph $H_{\ell}$ satisfying the condition $(\text{P}1_{s+2\ell})$ and $(\text{P}2_{s+2\ell})$.
By Claim~\ref{claim1}, $H_\ell$ contains a  matching $M_\ell$ of size at least $n-s-2\ell-1$.  Let $H_{\ell+1} := H_\ell \setminus M_\ell$. 
Then for any $i,j\in [n]$, we have $|N_{H_{\ell}}(a_i) \setminus N_{H_{\ell+1}}(a_i)| \leq 1$ and $|N_{H_{\ell}}(b_j)\setminus N_{H_{\ell+1}}(b_j)|\leq 1$. Thus the graph $H_{\ell+1}$ satisfies the condition $(\text{P}1_{s+2\ell+2})$ and $(\text{P}2_{s+2\ell+2})$.  
Repeating this for $0\leq \ell \leq k-1$ provides arc-disjoint matchings $M_0, M_1, \dots, M_{k-1}$ of $H_0$ where the size of $M_\ell$ is at least $n-s-2\ell-1$ for $0 \leq \ell \leq k-1$. By deleting some arcs, we may assume that for $0\leq \ell \leq k-1$ we have
\begin{align}\label{Mell size}
|E(M_\ell )| = n-s-2\ell-1.
\end{align}
Let $M$ be a subgraph of $H_0$ such that $E(M):= \bigcup_{\ell=0}^{k-1} E(M_\ell)$ and let $D_1$ be a subgraph of $D$ such that $$V(D_1):=V(D),\enspace E(D_1):= \{ \overrightarrow{v_iv_j} : v_iv'_j \in E(M)\}.$$
Then by construction of $H_0$, every arc of $D_1$ is a $\sigma$-forward arc and
\begin{align}\label{M size}
\Delta(M)\leq k \enspace \text{ and }\enspace |E(M)|= \sum_{\ell=0}^{k-1} |E(M_\ell)| \stackrel{\eqref{Mell size}}{=} kn - k^2 -sk.
\end{align}
Also this implies that 
\begin{align}\label{D1 size} 
&\Delta^{+}(D_1)\leq k, \enspace \Delta^{-}(D_1)\leq k, \enspace |E(D_1)|= kn-k^2 -sk, \nonumber \\
&d^{-}_{D_1}(v_i) \leq \min\{ k , i-1\} \enspace \text{ and } \enspace d^+_{D_1}(v_i) \leq \min\{k, n-i\}.
\end{align}
For each vertex $2k+s \leq i \leq n$, the number of $\sigma$-forward arcs towards $v_i$ in $D$ is at least $\lceil \frac{i-1-s}{2} \rceil \geq \lceil \frac{2k+s-1-s}{2} \rceil \geq k$ by ($\text{Q}2_{s}$). Thus for each $2k+s \leq i \leq n$, we can choose a set $N^-_i$ of $\sigma$-forward arcs towards $v_i$ such that $N^-_i\subseteq E(D)\setminus E(D_1)$ and $|N^-_i| = k-d^-_{D_1}(v_i)$. Similarly, for each $1\leq i\leq n-2k-s+1$, we can choose a set $N^+_i$ of $\sigma$-forward arcs from $v_i$ such that $N^+_i\cap E(D_1)=\emptyset$ and $|N^+_i| = k-d^+_{D_1}(v_i)$. 
Define a digraph $D' \subseteq D$ with 
$$V(D'):=V(D),\enspace E(D'):= E(D_1)\cup \bigcup_{i=2k+s}^{n} N^-_i \cup \bigcup_{i=1}^{n-2k-s+1} N^+_i.$$
Then $D'$ satisfies (D1) by construction, and satisfies (D2) since $|d^+_{D'}(v_i) |\geq d^+_{D_1}(v_i)+ |N^+_i| \geq k$ for $i \in [n-2k-s+1]$. Similarly, $D'$ also satisfies (D3), thus $D'$ is $(\sigma,k,2k+s-1)$-good.
Note that 
\begin{eqnarray*}
\left|\bigcup_{i=2k+s}^{n} N^-_i\right| &\leq&  \sum_{i=2k+s}^{n}(k-  d^-_{D_1}(v_i))  \enspace = \enspace k(n-2k-s+1) - \sum_{i=1}^{n} d^-_{D_1}(v_i) + \sum_{i=1}^{2k+s-1} d^-_{D_1}(v_i) \\
 &\stackrel{\eqref{D1 size}}{\leq} & k(n-2k-s+1) - |E(D_1)| + \sum_{i=1}^{2k+s-1} \min\{k,i-1\} \enspace \stackrel{\eqref{D1 size}}{=} \enspace \binom{k}{2} + sk.  
\end{eqnarray*}
Here, we get the second inequality because $E(D_1)= \sum_{i=1}^{n} d^-_{D_1}(v_i)$. Similarly, we also have $|\bigcup_{i=1}^{n-2k-s+1} N^+_i| \leq \binom{k}{2}+ sk.$
Thus we have
\begin{eqnarray*} 
|E(D')| &\leq& |E(D_1)| + \left|\bigcup_{i=2k+s}^{n} N^-_i\right| + \left|\bigcup_{i=1}^{n-2k-s+1}  N^+_i \right| \\
& \stackrel{\eqref{D1 size}}{\leq} & kn-k^2-sk + 2\binom{k}{2} + 2sk = kn -k + sk.
\end{eqnarray*}
\end {proof}

\section{Small tournaments}
In this section, we show that Theorem~\ref{main theorem} holds for any strongly $k$-connected tournament $T$ with at most $100 k\log(k+1)$ vertices. Note that Theorem~\ref{small tournament} is sufficient for our purpose. To prove Theorem~\ref{small tournament}, we use the following lemma, which is a modification of Lemma~2.1 in \cite{P1}, and the proof is almost identical except a few changes. 

\begin{lem}\cite{P1}\label{5m}
Let $k \geq 1$ and $n\geq 5k$ be integers.
Every $n$-vertex tournament $T$ contains two disjoint sets of vertices $X$ and $Y$ of size $k$ such that for any set $S$ of $k-1$ vertices and any $x\in X\setminus S, y\in Y\setminus S$ there is a path $P$ in $T-S$ from $x$ to $y$.
\end{lem}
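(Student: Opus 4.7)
The plan is to follow the strategy of Pokrovskiy's Lemma~2.1 in \cite{P1}, with the minor modifications indicated by the excerpt. The high-level approach is to select $X$ and $Y$ as disjoint sets of $k$ high-degree vertices, and then verify the path-existence property through a careful case analysis on short paths in $T - S$, using the fact that after removing $k-1$ vertices, both $x$ and $y$ retain abundant out- and in-neighbors respectively.

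First, I would construct $X$ and $Y$. Since $n \geq 5k$, Lemma~\ref{degree} guarantees at least $k$ vertices of out-degree at least $(n-k)/2 \geq 2k$ and at least $k$ vertices of in-degree at least $2k$. I would take $X$ as $k$ vertices from the first pool and $Y$ as $k$ vertices from the second, ensuring disjointness; this is straightforward by enlarging one of the pools slightly using Lemma~\ref{degree} (again using $n \geq 5k$) and selecting disjoint subsets greedily.

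Next, I verify the path property. Fix $S$ with $|S| = k - 1$ and $x \in X \setminus S$, $y \in Y \setminus S$. In $T - S$, $x$ retains out-degree at least $2k - (k-1) = k + 1$ and $y$ retains in-degree at least $k + 1$. I would attempt paths of increasing length: first, a direct arc $\overrightarrow{xy}$; otherwise, a length-2 path via some $w \in N^+_{T-S}(x) \cap N^-_{T-S}(y)$; otherwise, a length-3 path $x \to a \to b \to y$ with $a \in N^+_{T-S}(x)$, $b \in N^-_{T-S}(y)$, and $\overrightarrow{ab} \in E(T)$. The size bounds $|N^+_{T-S}(x)|, |N^-_{T-S}(y)| \geq k + 1$ make the intersection/bipartite-arc analyses tractable.

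The main obstacle will be the degenerate case where none of these short paths exists. In that case $A := N^+_{T-S}(x)$ and $B := N^-_{T-S}(y)$ are disjoint, all arcs between them are oriented $B \to A$, and the remaining $\geq 2k - 3$ vertices $c \in V(T-S) \setminus (\{x,y\} \cup A \cup B)$ all satisfy $\overrightarrow{yc}, \overrightarrow{cx} \in E(T)$ by tournament maximality. Following Pokrovskiy, I would close this case by constructing a longer path such as $x \to a \to c \to b \to y$ whenever suitable arcs $\overrightarrow{ac}$ and $\overrightarrow{cb}$ exist among $A$, $C := V(T-S) \setminus (\{x,y\} \cup A \cup B)$, and $B$; if no such arcs exist for any $c \in C$, then each $c$ must have very high out- or in-degree (either dominating all of $A$ or being dominated by all of $B$), which can be pushed to a structural contradiction with the high-degree choices of $X$ and $Y$. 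The detailed bookkeeping of the forbidden arc orientations and precise degree accounting in this endgame is the technical heart of the proof and is essentially identical to the argument in \cite{P1}.
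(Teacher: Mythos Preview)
Your proposal diverges from the paper's (and Pokrovskiy's) actual argument, and the divergence creates a real gap. The paper does \emph{not} fix $X,Y$ as high-degree vertices and then do a path-length case analysis in $T-S$. Instead it argues via a dichotomy: if $T$ contains a copy of $\overrightarrow{K_{k,k}}$ with parts $A,B$, take $X:=A$, $Y:=B$ and you are done immediately. Otherwise, take $X,Y$ to be the $k$ vertices of largest out- and in-degree, and for each pair $(x_i,y_j)$ build many internally vertex-disjoint paths of length $2$ and $3$: the length-$2$ paths through $I_{i,j}:=N^+(x_i)\cap N^-(y_j)$, and the length-$3$ paths through a maximum matching $M_{i,j}$ from $X_{i,j}:=N^+(x_i)\setminus N^-(y_j)$ to $Y_{i,j}:=N^-(y_j)\setminus N^+(x_i)$. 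If $|I_{i,j}|+|M_{i,j}|\geq k$ for all $i,j$, one of these $k$ internally disjoint paths survives any $S$ of size $k-1$. If $|I_{i,j}|+|M_{i,j}|<k$ for some $i,j$, then the unmatched parts of $X_{i,j}$ and $Y_{i,j}$ each have size at least $k$ and, by maximality of $M_{i,j}$, all arcs between them go from $Y_{i,j}$ to $X_{i,j}$ --- producing a $\overrightarrow{K_{k,k}}$ and contradicting the assumed dichotomy branch.

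Your plan instead commits to high-degree $X,Y$ from the outset and attempts to find a single path in $T-S$ by escalating through lengths $1,2,3,4$. Two concrete problems arise. First, your lower bound $|C|\geq 2k-3$ for $C=V(T-S)\setminus(\{x,y\}\cup A\cup B)$ is unjustified: you set $A=N^+_{T-S}(x)$ and $B=N^-_{T-S}(y)$, and nothing bounds $|A|+|B|$ from above, so $C$ can be empty. Second, and more seriously, the ``structural contradiction with the high-degree choices'' you promise in the degenerate case is never produced, and it is not clear one exists. What the degenerate case actually yields is a $\overrightarrow{K_{k,k}}$ (since $|A|,|B|\geq k+1$ with all arcs $B\to A$), and that is not a contradiction with $x$ having large out-degree or $y$ having large in-degree --- it is precisely the situation the paper handles by \emph{changing} $X$ and $Y$ to the bipartition of that $\overrightarrow{K_{k,k}}$. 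Without the upfront dichotomy (or an equivalent device), your argument does not close; and your appeal to ``following Pokrovskiy'' is misplaced, since Pokrovskiy's argument is the matching/dichotomy one, not an extended path-length case analysis.
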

\begin{proof}
Let $\overrightarrow{K_{k,k}}$ be a bipartite digraph with partition $A,B$ such that $|A|=|B|=k$ and for every $u\in A, v\in B$, we have $\overrightarrow{uv} \in E(\overrightarrow{K_{k,k}})$.  
If $T$ contains $\overrightarrow{K_{k,k}}$ with bipartition $A$ and $B$ as a subgraph, then $X:=A, Y:=B$ are sufficient for our purpose. Thus we may assume that $T$ does not contain $\overrightarrow{K_{k,k}}$ as a subgraph.

Let $X = \left \{x_1,\dots, x_k \right \}$ be a set of $k$ vertices in $T$ of largest out-degree and $\left \{ y_1,\dots,y_k \right \}$ be a set of $k$ vertices in $T$ of largest in-degree. Since $n \geq 5k$, we may assume $X \cap Y = \emptyset$. From Lemma~\ref{degree}, we have $d^+_T(x_i) \geq (n-k)/2\geq 2k$ and $d^-_T(y_i)\geq (n-k)/2\geq 2k$ for all $i\in [k]$. Consider a set $S\subseteq V(T)$ of size $k-1$.
For each $i,j\in [k]$ let $X_{i,j}:= N^+(x_i) \setminus N^-(y_j)$, $Y_{i,j}:= N^-(y_j)\setminus N^+(x_i)$, $I_{i,j} = N^+(x_i)\cap N^-(y_j)$. Let $M_{i,j}$ be a maximum matching between $X_{i,j}$ and $Y_{i,j}$ such that every arc is directed from $X_{i,j}$ to $Y_{i,j}$. 
For each $z\in I_{i,j}$, $T$ contains a path $(x_i,z,y_j)$ and for each $\overrightarrow{ww'} \in M_{i,j}$, $T$ contains a path $(x_i,w,w',y_j)$. Moreover, those paths are all pairwise internally vertex disjoint. Thus if $|M_{i,j}| + |I_{i,j}| \geq k$ for all $i,j\in [k]$, then for any $x_i$ and $y_j$, there are at least $k$ internally vertex disjoint paths from $x_i$ to $y_j$. So we are done since for each $i,j\in [k]$ at least one path from $x_i$ to $y_j$ does not intersect with $S$. If there exist $i,j \in [k]$ such that $|M_{i,j}| + |I_{i,j}| < k$, then we have
$$|X_{i,j} \setminus V(M_{i,j})|\geq | N^+_T(x_i) - I_{i,j} - V(M_{i,j})| \geq d^+_T(x_i) - k \geq k.$$ Similarly we get $|Y_{i,j}\setminus V(M_{i,j})|\geq k$. Since $M_{i,j}$ is a maximal matching from $X_{i,j}$ to $Y_{i,j}$, for any $x'\in X_{i,j}\setminus V(M_{i,j})$ and $y'\in Y_{i,j}\setminus V(M_{i,j})$ we have $\overrightarrow{y'x'} \in E(T)$. This contradicts the fact that $T$ does not contain $\overrightarrow{K_{k,k}}$.
\end{proof}

Now we prove the theorem, which has worse upper bound than the upper bound in Theorem~\ref{main theorem} for sufficiently large $n$. However, if $n$ is small enough, for example, $n \leq 100k \log(k+1)$, then the following theorem implies Theorem~\ref{main theorem}.

\begin{theorem}\label{small tournament}
For any integer $k\geq 1$, every strongly $k$-connected tournament $T$ contains a strongly $k$-connected spanning subgraph $D$ with $|E(D)|\leq  (5k-2)n + \binom{5k}{2}$.
\end{theorem}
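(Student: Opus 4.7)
The plan is as follows. If $n<5k$, take $D:=T$: the hypothesis guarantees strong $k$-connectivity and $|E(T)|=\binom{n}{2}\leq\binom{5k}{2}$. For $n\geq 5k$, pick any $W\subseteq V(T)$ with $|W|=5k$ and apply Lemma~\ref{5m} to $T[W]$ to obtain disjoint $k$-subsets $X,Y\subseteq W$ with the robust linkage: for every $S'\subseteq W$ with $|S'|\leq k-1$ and every $x\in X\setminus S'$, $y\in Y\setminus S'$, $T[W]-S'$ contains an $x$-to-$y$ path.

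I would then aim to construct $D$ satisfying (I) a $k$-fan from $u$ to $X$ in $D$ for every $u\in V(T)$, (II) a $k$-fan from $Y$ to $v$ in $D$ for every $v\in V(T)$, and (III) for every $S\subseteq V(T)$ with $|S|\leq k-1$ and every $x\in X\setminus S$, $y\in Y\setminus S$, an $x$-to-$y$ path in $D-S$. These imply strong $k$-connectivity: given $u,v$ and any $S$ with $|S|\leq k-1$, (I) yields a $u$-to-$x$ path in $D-S$ for some $x\in X\setminus S$ (only $k-1$ of the $k$ internally disjoint paths can be blocked by $S$), (III) supplies an $x$-to-$y$ bridge in $D-S$ for some $y\in Y\setminus S$, and (II) provides a $y$-to-$v$ path in $D-S$; concatenating yields a $u$-to-$v$ walk in $D-S$, hence a path.

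The construction first adds all arcs of $T[W]$ to $D$ (using $\binom{5k}{2}$ arcs and automatically securing (III), since $|S\cap W|\leq|S|\leq k-1$ places the Lemma~\ref{5m} path inside $T[W]-(S\cap W)\subseteq D-S$). For (I) and (II), for each exterior vertex $v\in V(T)\setminus W$ invoke Fact~\ref{Menger} to select a $k$-fan from $v$ to $X$ and one from $Y$ to $v$, and add their arcs to $D$, arranged so that at most $5k-2$ new arcs incident to $v$ are added (with interior segments of the fans routed inside $W$); the total is then at most $\binom{5k}{2}+(5k-2)(n-5k)\leq(5k-2)n+\binom{5k}{2}$. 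The principal obstacle is justifying this $5k-2$-arc budget per exterior vertex: once $v$ commits to its $k$ first-arcs into $W$, the remaining segments of each $k$-fan path toward $X$ (dually from $Y$) must be realizable inside $T[W]$, which reduces to a Hall-type matching between $v$'s chosen $W$-neighbors and the $k$ target vertices of $X$ (resp.\ $Y$), leveraging the explicit short bridges from the proof of Lemma~\ref{5m} inside $T[W]$. A secondary delicate point is verifying (I) and (II) for interior vertices $u\in W$, for which $T[W]$ alone need not suffice; the extra arcs contributed by exterior vertices' budgets must be exploited to route such fans through exterior vertices as intermediaries.
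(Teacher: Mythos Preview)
Your proposal diverges from the paper's argument, and the step you flag as the ``principal obstacle'' is a genuine gap that cannot be closed along the lines you sketch. You want, for each exterior vertex $v$, to realize a $k$-fan from $v$ to $X$ (and one from $Y$ to $v$) so that only $O(k)$ arcs incident to $v$ are spent, with the remainder of each fan routed inside $T[W]$. But nothing prevents an exterior $v$ from having \emph{zero} out-neighbours in $W$: in a strongly $k$-connected tournament one may have $W\subseteq N^-_T(v)$, and then every $k$-fan from $v$ to $X$ must begin with arcs into $V(T)\setminus W$ and may traverse arbitrarily many exterior vertices before entering $W$. No Hall-type matching inside $W$ helps, since the bottleneck is the very first step out of $v$, not the routing within $W$. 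Summing such fans over all exterior vertices gives no usable bound. Your secondary worry (fans for interior $u\in W$) is equally real --- $T[W]$ need not be strongly $k$-connected --- and the ``extra arcs contributed by exterior vertices' budgets'' are not under your control.

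The paper's proof bypasses both issues by a different global mechanism: it applies Lemma~\ref{main lemma} (with $s=0$) to the \emph{entire} tournament $T$, obtaining an ordering $\sigma=(v_1,\dots,v_n)$ and a $(\sigma,k,2k-1)$-good spanning subgraph $D'$ with at most $kn-k$ arcs. By Claim~\ref{connectivity claim}, from any vertex one can reach $\sigma(n-2k+2,n)$ in $D'-S$ for any $(k-1)$-set $S$, and dually be reached from $\sigma(1,2k-1)$. Hence full $k$-fans to $X$ (respectively from $Y$) are needed only for the $2k-1$ vertices at each end of $\sigma$ --- $2(2k-1)$ fans in total, each of total length at most $n-1$ since fan paths are internally disjoint. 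The count $\binom{5k}{2}+(kn-k)+2(2k-1)(n-1)\le(5k-2)n+\binom{5k}{2}$ then finishes. The missing idea in your plan is precisely this sparse $(\sigma,k,2k-1)$-good structure from Lemma~\ref{main lemma}; without it, there is no way to amortize the fan costs.
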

\begin{proof}
If $T$ has less than $5k$ vertices, then $T$ itself is sufficient to be $D$. Otherwise, let $V'\subseteq V$ be a set of $5k$ vertices. By applying Lemma~\ref{5m}, we can find two disjoint sets $X=\{x_1,\dots, x_k\},Y=\{y_1,\dots,y_k\}$ of size $k$ such that for any set $S\subseteq V'$ of size $k-1$ and vertices $x\in X, y\in Y$, there exists a path from $x$ to $y$ in $T[V']-S$. 
We apply Lemma~\ref{main lemma} to $T$ with parameters $0,k$ corresponding to $s,k$, and we obtain an ordering $\sigma = (v_1,\dots, v_n)$ of $V(T)$ and a $(\sigma,k,2k-1)$-good spanning subgraph $D'\subseteq T$ with $|E(D')|\leq kn - k$. 

For each $n-2k+2\leq i\leq n$, let $\{ P(v_i,j) : j\in [k]\}$ be a $k$-fan from $v_i$ to $X$ (which exists since $T$ is strongly $k$-connected) such that $P(v_i , j)$ is a path from $v_i$ to $x_j$. Note that if $v_i=x_j$, then $P(v_i,j)$ is a path of one vertex.
Similarly, for each $1\leq i\leq 2k-1$, let $\{Q(v_i,j) : j\in [k]\}$  be a $k$-fan from $Y$ to $v_i$ such that $Q(v_i,j)$ is a path from $y_j$ to $v_i$.  Note that if $v_i=y_j$, then $Q(v_i,j)$ is a path of one vertex.

For each $n-2k+2\leq i\leq n$ and $1\leq i'\leq 2k-1$, it follows that
$$\sum_{j=1}^{k} |E(P(v_i,j))|\leq n-1,\enspace \sum_{j=1}^{k} |E(Q(v_{i'},j))|\leq n-1,$$
because no vertex other than $v_i$ is covered by two distinct paths in a $k$-fan from $v_i$ to $X$ or by two distinct paths in a $k$-fan from $Y$ to $v_i$. Let $D$ be the subgraph of $T$ such that 
$$V(D):=V(T), \enspace E(D):= E(T(V'))\cup E(D') \cup \bigcup_{i=1}^{2k-1}\bigcup_{j=1}^{k} Q(v_i,j) \cup \bigcup_{i=n-2k+2}^{n}\bigcup_{j=1}^{k} P(v_i,j).$$
Then 
\begin{align*} 
|E(D)|&\leq  |E(T(V'))| + |E(D')| + (2k-1)(n-1) + (2k-1)(n-1) \\
&\leq \binom{5k}{2} + kn - k + (4k-2)n \leq (5k-2)n + \binom{5k}{2}.
\end{align*}
Moreover, for any set $S\subseteq V(D)$ of $k-1$ vertices and any vertices $u,v\in V(T)\setminus S$, there is a path $P$ from $v$ to $v_i$ and a path $P'$ from $v_{i'}$ to $u$ in $D'-S$ for some $i\geq n-2k+2$ and $i'\leq 2k+1$, by Claim~\ref{connectivity claim}. Since $\{P(v_i,j): j\in [k]\}$ and $\{Q(v_{i'},j) : j\in [k] \}$ are $k$-fans, there are $s,s' \in [k]$ such that both $P(v_i,s)$ and $Q(v_{i'},s')$ do not intersect $S$. Let $x_s^* \in X$ and $y_{s'}^* \in Y$ be the endpoints of $P(v_i,s)$ and $Q(v_{i'},s')$, respectively. (note that if $v_i \in X$ ($v_{i'} \in Y$), then $x_s^* = v_i$ ($y_{s'}^* = v_{i'}$).)
By Claim~\ref{5m}, there is a path $P''$ in $T[V']-S$ from $x_s^*$ to $y_{s'}^*$. Hence $E(P)\cup E(P(v_i,s))\cup E(P'')\cup E(Q(v_{i'},s'))\cup E(P')$ contains a path in $D-S$ from $u$ to $v$. Thus $D$ is strongly $k$-connected. 
\end{proof}

\section{Proof of Theorem~\ref{main theorem}}

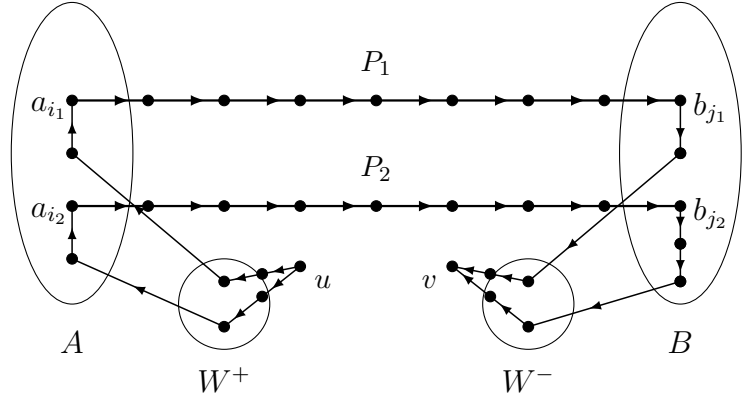
\begin{figure}
\centering
\begin{tikzpicture}[scale=1]

  \draw[fill=none] (8,0) ellipse [x radius=0.8,y radius=2];
  \node at (8,-2.5) {\large $B$};
 \draw[fill=none] (0,0) ellipse [x radius=0.8,y radius=2];
\node at (0,-2.5) {\large $A$};

 \draw[fill=none] (2,-2) ellipse [x radius=0.6,y radius=0.6];
\node at (2,-3) {\large $W^+$};

 \draw[fill=none] (6,-2) ellipse [x radius=0.6,y radius=0.6];
\node at (6,-3) {\large $W^-$};

\node at (-0.3,0.6) {\large $a_{i_1}$};
\node at (-0.3,-0.8) {\large $a_{i_2}$};

\node at (8.4,0.6) {\large $b_{j_1}$};
\node at (8.4,-0.8) {\large $b_{j_2}$};

\filldraw[fill=black] (0,0.7) circle (2pt);
\filldraw[fill=black] (1,0.7) circle (2pt);
\filldraw[fill=black] (2,0.7) circle (2pt);
\filldraw[fill=black] (3,0.7) circle (2pt);
\filldraw[fill=black] (4,0.7) circle (2pt);
\filldraw[fill=black] (5,0.7) circle (2pt);
\filldraw[fill=black] (6,0.7) circle (2pt);
\filldraw[fill=black] (7,0.7) circle (2pt);
\filldraw[fill=black] (8,0.7) circle (2pt);

\draw[middleuparrow={latex}, line width=0.3mm] (0,0.7) -- (1,0.7);
\draw[middleuparrow={latex}, line width=0.3mm] (1,0.7) -- (2,0.7);
\draw[middleuparrow={latex}, line width=0.3mm] (2,0.7) -- (3,0.7);
\draw[middleuparrow={latex}, line width=0.3mm] (3,0.7) -- (4,0.7);
\draw[middleuparrow={latex}, line width=0.3mm] (4,0.7) -- (5,0.7);
\draw[middleuparrow={latex}, line width=0.3mm] (5,0.7) -- (6,0.7);
\draw[middleuparrow={latex}, line width=0.3mm] (6,0.7) -- (7,0.7);
\draw[middleuparrow={latex}, line width=0.3mm] (7,0.7) -- (8,0.7);

\node at (4,1.2) {{\bf $P_1$}};

\filldraw[fill=black] (0,-0.7) circle (2pt);
\filldraw[fill=black] (1,-0.7) circle (2pt);
\filldraw[fill=black] (2,-0.7) circle (2pt);
\filldraw[fill=black] (3,-0.7) circle (2pt);
\filldraw[fill=black] (4,-0.7) circle (2pt);
\filldraw[fill=black] (5,-0.7) circle (2pt);
\filldraw[fill=black] (6,-0.7) circle (2pt);
\filldraw[fill=black] (7,-0.7) circle (2pt);
\filldraw[fill=black] (8,-0.7) circle (2pt);
\node at (4,-0.2) {{\bf $P_2$}};

\draw[middleuparrow={latex}, line width=0.3mm] (0,-0.7) -- (1,-0.7);
\draw[middleuparrow={latex}, line width=0.3mm] (1,-0.7) -- (2,-0.7);
\draw[middleuparrow={latex}, line width=0.3mm] (2,-0.7) -- (3,-0.7);
\draw[middleuparrow={latex}, line width=0.3mm] (3,-0.7) -- (4,-0.7);
\draw[middleuparrow={latex}, line width=0.3mm] (4,-0.7) -- (5,-0.7);
\draw[middleuparrow={latex}, line width=0.3mm] (5,-0.7) -- (6,-0.7);
\draw[middleuparrow={latex}, line width=0.3mm] (6,-0.7) -- (7,-0.7);
\draw[middleuparrow={latex}, line width=0.3mm] (7,-0.7) -- (8,-0.7);

\filldraw[fill=black] (3,-1.5) circle (2pt);
\filldraw[fill=black] (5,-1.5) circle (2pt);
\node at (3.3,-1.7) {{\bf $u$}};
\node at (4.7,-1.7) {{\bf $v$}};

\filldraw[fill=black] (2.5,-1.6) circle (2pt);
\filldraw[fill=black] (2.5,-1.9) circle (2pt);

\filldraw[fill=black] (5.5,-1.6) circle (2pt);
\filldraw[fill=black] (5.5,-1.9) circle (2pt);

\filldraw[fill=black] (2,-1.7) circle (2pt);
\filldraw[fill=black] (2,-2.3) circle (2pt);

\draw[middleuparrow={latex}, line width=0.2mm] (3,-1.5) -- (2.5,-1.6);
\draw[middleuparrow={latex}, line width=0.2mm] (2.5,-1.6) -- (2,-1.7);
\draw[middleuparrow={latex}, line width=0.2mm] (3,-1.5) -- (2.5,-1.9);
\draw[middleuparrow={latex}, line width=0.2mm] (2.5,-1.9) -- (2,-2.3);



\draw[middlearrow={latex}, line width=0.2mm] (2,-1.7) -- (0,0);
\draw[middlearrow={latex}, line width=0.2mm] (2,-2.3) -- (0,-1.4);

\draw[middlearrow={latex}, line width=0.2mm] (0,0) -- (0,0.7);
\draw[middlearrow={latex}, line width=0.2mm] (0,-1.4) -- (0,-0.7);

\filldraw[fill=black] (0,0) circle (2pt);
\filldraw[fill=black] (0,-1.4) circle (2pt);
\filldraw[fill=black] (8,0) circle (2pt);

\filldraw[fill=black] (8,-1.2) circle (2pt);
\filldraw[fill=black] (8,-1.7) circle (2pt);



\filldraw[fill=black] (6,-1.7) circle (2pt);
\filldraw[fill=black] (6,-2.3) circle (2pt);

\draw[middleuparrow={latex}, line width=0.2mm] (6,-1.7) -- (5.5,-1.6) ;
\draw[middleuparrow={latex}, line width=0.2mm] (6,-2.3) -- (5.5,-1.9);
\draw[middleuparrow={latex}, line width=0.2mm] (5.5,-1.6) -- (5,-1.5);
\draw[middleuparrow={latex}, line width=0.2mm] (5.5,-1.9) -- (5,-1.5);





\draw[middleuparrow={latex}, line width=0.2mm] (8,0) -- (6,-1.7);
\draw[middlearrow={latex}, line width=0.2mm] (8,-1.7) -- (6,-2.3);

\draw[middleuparrow={latex}, line width=0.2mm] (8,0.7) -- (8,0);
\draw[middlearrow={latex}, line width=0.2mm] (8,-0.7) -- (8,-1.2);
\draw[middleuparrow={latex}, line width=0.2mm] (8,-1.2) -- (8,-1.7);

\filldraw[fill=black] (8,0) circle (2pt);

\filldraw[fill=black] (8,-1.2) circle (2pt);
\filldraw[fill=black] (8,-1.7) circle (2pt);

\end{tikzpicture}
\caption{ Two paths from $u$ to $v$ in the outline of the idea when $k=2$. }
\end{figure}

 \subsection*{Outline of the idea}
 For a strongly $k$-connected tournament $T$, we construct a set $A$ which is the union of many in-dominating sets, a set $B$ which is the union of many out-dominating sets and $k$ pairwise vertex disjoint paths $P_1,\dots, P_k$ from $A$ to $B$ such that the path $P_t$ is from $a_{i_t}$ to $b_{j_t}$ for each $t\in [k]$. We choose the size of in-dominating sets and out-dominating sets in $A$ and $B$ to be sufficiently small (Lemmas~\ref{in-dominating sets} and~\ref{out-dominating sets}) so that there are few vertices in both $A$ and $B$.
 
 To find a sparse subgraph $D$, we divide the vertex set $V(T)$ into $V_1,V'_1,V_2,V_3, V_4$ and apply Lemma~\ref{main lemma} to each set and find two small sets $W^+$ and $W^-$ such that $D$ contains $k$ internally vertex-disjoint paths from any vertex $u$ to $W^+$ and $k$ internally vertex-disjoint paths from $W^-$ to any vertex $v$. We also add some arcs to the subgraph $D$ so that there are $k$ arcs in $D$ from each vertex in $W^+$ to $A$, and $k$ arcs in $D$ from $B$ to each vertex in $W^-$. Note that this is possible since $A$ is a union of many in-dominating sets and $B$ is a union of many out-dominating sets. 
By adding some arcs inside $A$ and $B$, we can also ensure that there are $k$ internally vertex-disjoint paths from any vertex in $A$ to the vertices $a_{i_1}, \dots,a_{i_k}$ and $k$ internally vertex-disjoint paths from $b_{j_1},\dots b_{j_k}$ to any vertex in $B$.
Then for each distinct vertices $u,v \in V(T)$, the paths from $u$ to $W^+$, the arcs from $W^+$ to $A$, the paths inside $A$ to $a_{i_1},\dots, a_{i_k}$, the paths $P_1,\dots, P_k$, the paths inside $B$ from $b_{j_1},\dots,b_{j_k}$, the arcs from $B$ to $W^-$, and the paths from $W^-$ to $v$ all together form $k$ internally vertex-disjoint paths from $u$ to $v$ as in Figure 1. Since $u$ and $v$ are arbitrarily chosen, $D$ is strongly $k$-connected while $D$ is sparse enough. 

 \subsection*{Proof of Theorem~\ref{main theorem}}   
 Let $T$ be a strongly $k$-connected $n$-vertex tournament with a vertex-set $V$. Note that Theorem~\ref{main theorem} is trivial for $k=1$ since every strongly connected $n$-vertex tournament contains a Hamilton cycle (see~\cite[Theorem 1.5.1]{BG Book}). There is  an algorithm that finds a Hamilton cycle in an $n$-vertex tournament and runs in $O(n^2)$ (see~\cite{yannis}). If $k \geq 2$ and $n\leq 100 k\log(k+1)$, then Theorem~\ref{small tournament} implies Theorem~\ref{main theorem}. Thus we may assume that
 \begin{align*}
 k\geq 2, \enspace n > 100k\log(k+1).
 \end{align*} 
Now we construct an appropriate in-dominating set $A$ and out-dominating set $B$ as we sketched before. Let $X$ and $Y$ be two disjoint sets such that $X$ is a set of $3k-1$ vertices with smallest out-degrees, and let $Y$ is a set of $3k-1$ vertices with smallest in-degrees. Let $\delta^- := \max_{y\in Y} d^{-}_T(y)$ and $ \delta^+ := \max_{x \in X} d^{+}_T(x)$. Without loss of generality, we assume 
\begin{align}\label{delta size}
\delta^- \geq \delta^+.
\end{align}
Choose $x_1\in X$ having the largest number of out-neighbors in $V\setminus (X\cup Y)$ among all vertices in $X$, and let 
$$d^+_1 := |(V\setminus (X\cup Y))\cap N^+_{T}(x_1)|.$$
We apply Lemma~\ref{in-dominating sets} with $T-((X-\left\{x_1 \right \}) \cup Y), x_1, d^+_1$ corresponding to $T,v,d$ to find a set $A_1$ and a sink vertex $a_1\in A_1$ satisfying (a1)--(a5). Note that (a1) implies that $A_1$ is nonempty and $a_1=x_1$ could happen when $d^+_1=0$.
For given $x_1,\dots, x_{i}$ and $A_1,\dots, A_i$, let us choose $x_{i+1}\in X\setminus\{x_1,\dots, x_{i}\}$ having the largest number of out-neighbours in $V\setminus (X\cup Y\cup \bigcup_{j=1}^{i} A_j)$ among all the vertices in $X\setminus\{x_1,\dots,x_{i}\}$ and let $$d^+_{i+1} := |(V\setminus (X\cup Y \cup \bigcup_{j=1}^{i} A_j))\cap N^+_{T}(x_{i+1})|.$$
We apply Lemma~\ref{in-dominating sets} with $T-((X - \left \{ x_{i+1} \right \}) \cup Y\cup \bigcup_{j=1}^{i} A_j), x_{i+1}, d^+_{i+1}$ corresponding to $T,v,d$ to find a set $A_{i+1}$ and a sink vertex $a_{i+1}\in A_{i+1}$ satisfying (a1)--(a5). By repeating this $3k-1$ times, we get $A_{1},\dots,A_{3k-1}$ and $a_1,\dots, a_{3k-1}$. We let $A:=\bigcup_{i=1}^{3k-1} A_i.$ 
 
Next, we choose  $y_1\in Y$ having the largest number of in-neighbours in $V\setminus (X\cup Y\cup A)$. Let 
$$d^{-}_1 := |(V\setminus(X\cup Y \cup A))\cap N^-_{T}(y_1)|.$$ 
Then we apply Lemma~\ref{out-dominating sets} with $T-(X\cup (Y-\left \{y_1 \right \})\cup A), y_1, d^-_1$ corresponding to $T,v,d$ to find a set $B_1$ and a source vertex $b_1\in B_1$ satisfying (b1)--(b5). Note that (b1) implies that $B_1$ is nonempty and $b_1=y_1$ could happen when $d^-_1=0$.
For given $A, y_1,\dots,y_{i}$ and $B_1,\dots, B_i$, let us choose $y_{i+1}\in Y\setminus\{y_1,\dots, y_{i}\}$ having the largest number of in-neighbours in $V\setminus (X\cup Y\cup A\cup \bigcup_{j=1}^{i} B_j)$ among all the vertices in $Y\setminus\{y_1,\dots,y_{i}\}$ and let $$d^-_{i+1} := |(V\setminus (X\cup Y \cup A\cup \bigcup_{j=1}^{i} B_j))\cap N^-_{T}(y_{i+1})|.$$
We apply Lemma~\ref{out-dominating sets} with $T-(X\cup (Y - \left \{ y_{i+1} \right \})\cup A\cup \bigcup_{j=1}^{i} B_j), y_{i+1}, d^-_{i+1}$ corresponding to $T,v,d$ to find a set $B_{i+1}$ and a source vertex $b_{i+1}\in B_{i+1}$ satisfying (b1)--(b5). By repeating this $3k-1$ times, we get $B_{1},\dots,B_{3k-1}$ and $b_1,\dots, b_{3k-1}$. We let $B:=\bigcup_{i=1}^{3k-1} B_i.$ 
Note that $T[B_i]$ is a transitive tournament for each $i\in [3k-1]$. For each $i$, we let $B'_i$ be the set of the last $\max(\lceil |B_i|/5-13\rceil , 0)$ vertices, and let $B''_i$ be the set of the first $\min(\lceil 5 \log(k)+30 \rceil , |B_i|)$ vertices in the transitive ordering of $T[B_i]$, respectively. Note that $B_i'$ and $B_i''$ are not necessarily disjoint. 

We define 
\begin{align*}
A_{\rm sink}:=\{a_1,\dots, a_{3k-1}\}, B_{\rm source}:=\{b_1,\dots, b_{3k-1}\}, B':= \bigcup_{i=1}^{3k-1} B'_i, \text{ and }B'':= \bigcup_{i=1}^{3k-1} B''_i.
\end{align*}
From this construction, we get numbers $d^+_{1},\dots, d^+_{3k-1},d^-_1,\dots, d^-_{3k-1}$ satisfying
\begin{align}\label{d size}
\delta^+ \geq d^+_1 \geq d^+_2 \geq \dots \geq d^+_{3k-1} \enspace \text{ and } \enspace \delta^-\geq d^-_1 \geq d^-_2 \geq \dots \geq d^-_{3k-1},
\end{align}
and sets $A_1,\dots, A_{3k-1}$, $B_1,\dots, B_{3k-1}$, $B'_1,\dots, B'_{3k-1}$, $B''_1,\dots, B''_{3k-1}$ and vertices $a_1,\dots$, $ a_{3k-1}$, $b_{1},\dots, b_{3k-1}$ satisfying the following (A1)--(A3) and (B1)--(B6) for all $i\in [3k-1]$.
\begin{enumerate}
\item[(A1)] $\frac{1}{2}\log(d^+_{i}+1)+1 \leq |A_i|\leq \frac{5}{2}\log(d^+_{i}+1)+2,$
\item[(A2)] $T[A_i]$ is a transitive tournament with source $x_i$ and sink $a_i$,
\item[(A3)] $A_i$ in-dominates $V\setminus(A\cup B)$,

\item[(B1)] $\frac{1}{2}\log(d^-_{i}+1)+1 \leq |B_i|\leq \frac{5}{2}\log(d^-_{i}+1)+2,$
\item[(B2)] $T[B_i]$ is a transitive tournament with sink $y_i$ and source $b_i$,
\item[(B3)] $B_i$ out-dominates $V\setminus(A\cup B)$,
\item[(B4)] $|B'_i|\geq |B_i|/5-13$ and for $v \in B'_i$ we have $$| N^+_T(v)\setminus (A\cup \bigcup_{j=1}^{i}B_j) |\geq 8 (d^{-}_{i})^{1/7} - 1 ,\enspace | N^-_T(v)\setminus (A\cup \bigcup_{j=1}^{i}B_j) |\geq 8 (d^{-}_{i})^{1/7} - 1.$$

\item[(B5)] $|B''_i| < 5\log(k)+31$ and for $v\in B_i\setminus B''_i$ we have $$| N^+_T(v)\setminus (A\cup \bigcup_{j=1}^{i}B_j) | \geq 1000k^2 ,\enspace | N^-_T(v)\setminus (A\cup \bigcup_{j=1}^{i}B_j) | \geq 1000k^2.$$

\item[(B6)] For any vertex $v \in B_i\setminus B'_i$, we have $B'_i\subseteq N^+_{T}(v)$.
\end{enumerate}\vspace{0.3cm}

 By Lemma~\ref{degree}, each of $T[A_{\rm sink}]$ and $T[B_{\rm source}]$ contains $k$ vertices of in-degree at least $k$ and $k$ vertices of out-degree at least $k$. Let $a_{i_1},\dots, a_{i_k} \in A_{\rm sink}$ be $k$ distinct vertices having in-degree at least $k$ in $T[A_{\rm sink}]$ and let $b_{j_1},\dots, b_{j_k} \in B_{\rm source}$ be distinct $k$ vertices having out-degree at least $k$ in $T[B_{\rm source}]$.
By (A1), (B1) and the fact that $\delta^- \leq n-1$, we have $|A\cup B| \leq (6k-2)(\frac{5}{2}\log(n)+2) < n-k$ since $n\geq 100 k\log(k+1)$ and $k \geq 2$. Thus we have 
\begin{align}\label{V-A-B size}
|V\setminus(A\cup B)| \geq k.
\end{align}

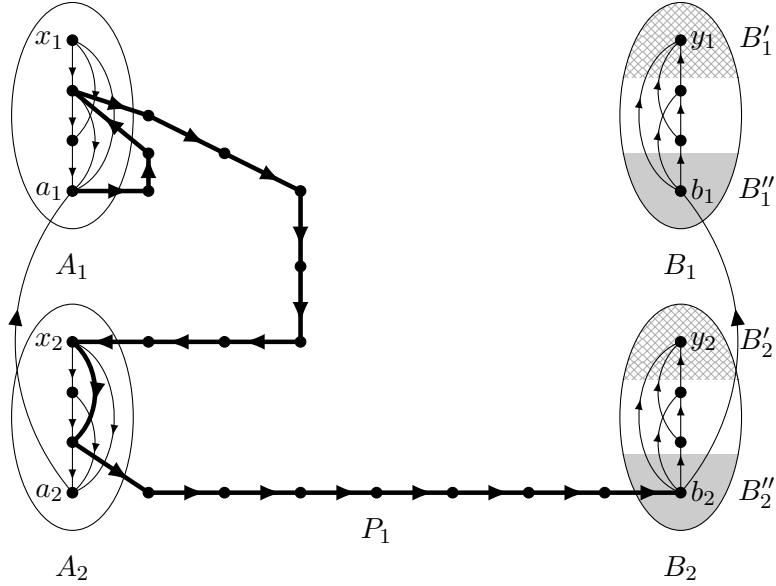
\begin{figure}
\centering
\begin{tikzpicture}[scale=1]

\begin{scope}
\clip
  (6,2) rectangle (10,0.5);
  \draw[preaction={fill=none}, pattern=crosshatch, pattern color=black!30](8,0) ellipse [x radius=0.8,y radius=1.5];
\end{scope}

\begin{scope}
\clip
  (6,0.5) rectangle (10,-0.5);
  \draw[fill=none] (8,0) ellipse [x radius=0.8,y radius=1.5];
\end{scope}

\begin{scope}
\clip
  (6,-2) rectangle (10,-0.5);
  \draw[fill=black!20] (8,0) ellipse [x radius=0.8,y radius=1.5];
\end{scope}

\node at (8,-2) {$B_1$};
\node at (9,1) {$B'_1$};
\node at (9,-1) {$B''_1$};

\begin{scope}
\clip
  (6,-2) rectangle (10,-3.5);
  \draw[preaction={fill=none}, pattern=crosshatch, pattern color=black!30](8,-4) ellipse [x radius=0.8,y radius=1.5];
\end{scope}

\begin{scope}
\clip
  (6,-3.5) rectangle (10,-4.5);
  \draw[fill=none] (8,-4) ellipse [x radius=0.8,y radius=1.5];
\end{scope}

\begin{scope}
\clip
  (6,-6) rectangle (10,-4.5);
  \draw[fill=black!20] (8,-4) ellipse [x radius=0.8,y radius=1.5];
\end{scope}

\node at (8,-6) {$B_2$};
\node at (9,-3) {$B'_2$};
\node at (9,-5) {$B''_2$};

 \draw[fill=none] (0,0) ellipse [x radius=0.8,y radius=1.5];
\node at (0,-2) {$A_1$};
 \draw[fill=none] (0,-4) ellipse [x radius=0.8,y radius=1.5];
\node at (0,-6) {$A_2$};

\filldraw[fill=black] (0,1) circle (2pt);
\filldraw[fill=black] (0,0.33) circle (2pt);
\filldraw[fill=black] (0,-0.33) circle (2pt);
\filldraw[fill=black] (0,-1) circle (2pt);

\draw[middlearrow={latex}] (0,1) to[bend left=70] (0,-1) ;
\draw[middleuparrow={latex}] (0,1) -- (0,0.33) ;
\draw[middleuparrow={latex}] (0,0.33) -- (0,-0.33) ;
\draw[middleuparrow={latex}] (0,-0.33) -- (0,-1) ;
\draw[middlearrow={latex}] (0,1) to[bend left=50] (0,-0.33) ;
\draw[middlearrow={latex}] (0,0.33) to[bend left=50](0,-1) ;

\node at (-0.3,-1) {$a_1$};
\node at (-0.3,1) {$x_1$};

\filldraw[fill=black] (0,-3) circle (2pt);
\filldraw[fill=black] (0,-3.67) circle (2pt);
\filldraw[fill=black] (0,-4.33) circle (2pt);
\filldraw[fill=black] (0,-5) circle (2pt);

\draw[middlearrow={latex}] (0,-3) to[bend left=70] (0,-5) ;
\draw[middleuparrow={latex}] (0,-3) -- (0,-3.67) ;
\draw[middleuparrow={latex}] (0,-3.67) -- (0,-4.33) ;
\draw[middleuparrow={latex}] (0,-4.33) -- (0,-5) ;
\draw[middlearrow={latex}] (0,-3.67) to[bend left=50](0,-5) ;

\node at (-0.3,-5) {$a_2$};
\node at (-0.3,-3) {$x_2$};

\filldraw[fill=black] (8,1) circle (2pt);
\filldraw[fill=black] (8,0.33) circle (2pt);
\filldraw[fill=black] (8,-0.33) circle (2pt);
\filldraw[fill=black] (8,-1) circle (2pt);

\draw[middlearrow={latex}] (8,-1) to[bend left=70] (8,1) ;
\draw[middleuparrow={latex}] (8,0.33) -- (8,1);
\draw[middleuparrow={latex}] (8,-0.33) -- (8,0.33) ;
\draw[middleuparrow={latex}] (8,-1) -- (8,-0.33) ;
\draw[middlearrow={latex}] (8,-0.33) to[bend left=50] (8,1) ;
\draw[middlearrow={latex}] (8,-1) to[bend left=50] (8,0.33) ;
\node at (8.3,-1) {$b_1$};
\node at (8.3,1) {$y_1$};

\filldraw[fill=black] (8,-3) circle (2pt);
\filldraw[fill=black] (8,-3.67) circle (2pt);
\filldraw[fill=black] (8,-4.33) circle (2pt);
\filldraw[fill=black] (8,-5) circle (2pt);

\draw[middlearrow={latex}] (8,-5) to[bend left=70] (8,-3) ;
\draw[middleuparrow={latex}] (8,-3.67) -- (8,-3);
\draw[middleuparrow={latex}] (8,-4.33) -- (8,-3.67) ;
\draw[middleuparrow={latex}] (8,-5) -- (8,-4.33) ;
\draw[middlearrow={latex}] (8,-4.33) to[bend left=50] (8,-3) ;
\draw[middlearrow={latex}] (8,-5) to[bend left=50] (8,-3.67) ;
\node at (8.3,-5) {$b_2$};
\node at (8.3,-3) {$y_2$};

\draw[middlearrow={triangle 45}] (0,-5) to[bend left=40] (0,-1) ;

\draw[middlearrow={triangle 45}] (8,-5) to[bend right=40] (8,-1) ;

\filldraw[fill=black] (1,-1) circle (2pt);
\filldraw[fill=black] (1,-0.5) circle (2pt);
\filldraw[fill=black] (1,0) circle (2pt);
\filldraw[fill=black] (2,-0.5) circle (2pt);

\filldraw[fill=black] (3,-1) circle (2pt);
\filldraw[fill=black] (3,-2) circle (2pt);
\filldraw[fill=black] (3,-3) circle (2pt);
\filldraw[fill=black] (2,-3) circle (2pt);
\filldraw[fill=black] (1,-3) circle (2pt);

\filldraw[fill=black] (1,-5) circle (2pt);
\filldraw[fill=black] (2,-5) circle (2pt);
\filldraw[fill=black] (3,-5) circle (2pt);
\filldraw[fill=black] (4,-5) circle (2pt);
\filldraw[fill=black] (5,-5) circle (2pt);
\filldraw[fill=black] (6,-5) circle (2pt);
\filldraw[fill=black] (7,-5) circle (2pt);

\draw[middleuparrow={latex}, line width=0.6mm] (0,-1) -- (1,-1);
\draw[middleupuparrow={latex}, line width=0.6mm] (1,-1) -- (1,-0.5);
\draw[middlearrow={latex}, line width=0.6mm] (1,-0.5) -- (0,0.33);
\draw[middleuparrow={latex}, line width=0.6mm] (0,0.33) -- (1,0);
\draw[middleuparrow={latex}, line width=0.6mm] (1,0) -- (2,-0.5);
\draw[middleuparrow={latex}, line width=0.6mm] (2,-0.5) -- (3,-1);
\draw[middleuparrow={latex}, line width=0.6mm] (3,-1) -- (3,-2);
\draw[middleuparrow={latex}, line width=0.6mm] (3,-2) -- (3,-3);
\draw[middleuparrow={latex}, line width=0.6mm] (3,-3) -- (2,-3);
\draw[middleuparrow={latex}, line width=0.6mm] (2,-3) -- (1,-3);
\draw[middleuparrow={latex}, line width=0.6mm] (1,-3) -- (0,-3);
\draw[middlearrow={latex}, line width=0.6mm] (0,-3) to[bend left=50] (0,-4.33);
\draw[middleuparrow={latex}, line width=0.6mm] (0,-4.33) -- (1,-5);
\draw[middleuparrow={latex}, line width=0.6mm] (1,-5) -- (2,-5);
\draw[middleuparrow={latex}, line width=0.6mm] (2,-5) -- (3,-5);
\draw[middleuparrow={latex}, line width=0.6mm] (3,-5) -- (4,-5);
\draw[middleuparrow={latex}, line width=0.6mm] (4,-5) -- (5,-5);
\draw[middleuparrow={latex}, line width=0.6mm] (5,-5) -- (6,-5);
\draw[middleuparrow={latex}, line width=0.6mm] (6,-5) -- (7,-5);
\draw[middleuparrow={latex}, line width=0.6mm] (7,-5) -- (8,-5);

\node at (4,-5.5) {{\bf $P_1$}};

\end{tikzpicture}
\caption{ A picture when $k=1, i_1 = 1$ and $j_1 =2$. }
\end{figure}

Our aim is to find collections of arcs $E_0, E_1, E_2, E_3, E_4$ and $E_5$ which together form a desired digraph $D$. Since the tournament $T$ is strongly $k$-connected, by Menger's theorem,
let $P_1 , \dots , P_k$ be $k$ vertex-disjoint paths from $\left \{a_{i_1} , \dots , a_{i_k} \right \}$ to $\{b_{j_1},\dots, b_{j_k}\}$. We choose those $k$ vertex-disjoint paths with the minimum length $\sum_{i=1}^{k}{|E(P_i)|}$, and thus each path $P_i$ is backwards-transitive for $1 \leq i \leq k$. 
Note that $V(P_i)$ is not necessarily disjoint from $A\cup B \setminus \{a_{i_1},\dots, a_{i_k}, b_1,\dots, b_{j_k}\}$.
By permuting indices, we may assume that $P_s$ is a backwards-transitive path from $a_{i_s}$ to $b_{j_s}$. See Figure~2 for the picture which we currently have.
Let $V^{\rm int}(P_s)$ be the set of internal vertices of $P_s$. We define 
\begin{align}\label{V1 def}
V_1:= (A\cup B)  \setminus (\bigcup_{i=1}^{k} V^{\rm int}(P_i)), \enspace  V'_1 := (A\cup B)\cap (\bigcup_{i=1}^{k} V^{\rm int}(P_i))~ \text{ and } ~E_0 := \bigcup_{s=1}^{k} E(P_s).
\end{align}

Before starting the construction of $E_1,E_2, E_3, E_4$ and $E_5$, we prove Claim~\ref{in-AB} and Claim~\ref{out-AB} showing that for any $v\in A\cup B$ there exists a $k$-fan from $v$ to $V\setminus (A\cup B)$ and a $k$-fan from $V\setminus (A\cup B)$ to $v$ consisting of short paths.
\begin{claim} \label{in-AB}
For any vertex $v\in A\cup B$, we can find a $k$-fan $\{P^-(v,1),\dots, P^-(v,k)\}$ from $V\setminus(A\cup B)$ to $v$ such that $\sum_{i=1}^{k} |E(P^-(v,i))| \leq 70k\log(k+1)$.
\end{claim}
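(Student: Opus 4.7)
The plan is to exploit the redundancy provided by the $3k-1$ in-dominating sets $A_1,\dots,A_{3k-1}$ (and symmetrically the out-dominating sets $B_j$ when $v\in B$). Since only $k$ internally vertex-disjoint paths to $v$ are needed and roughly $3k$ candidate ``conduits'' are available, each path can be assigned to a distinct set $A_j$ (or $B_j$), making internal disjointness of the fan largely automatic from the pairwise disjointness of the $A_j$'s. The decisive structural feature is (A2): each $T[A_j]$ is a transitive tournament, so \emph{any} ordered pair of vertices of $A_j$ is joined by a single arc in $T$, giving ``free'' length-$1$ shortcuts inside each conduit.

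Assume first $v\in A_{i^*}$. For each $j\ne i^*$ I aim to build a short path of the form
\[
u_j\longrightarrow a'_j\longrightarrow a^*_j\longrightarrow v,
\]
where $u_j\in V\setminus(A\cup B)$ is a distinct starting vertex, $a'_j\in A_j$ is any out-neighbor of $u_j$ in $A_j$ (guaranteed by the in-dominating property (A3)), and $a^*_j\in A_j$ is an in-neighbor of $v$ in $T$ placed after $a'_j$ in the transitive ordering of $A_j$, so that the arc $a'_j\to a^*_j$ is supplied by (A2). If $k$ of the $A_j$'s with $j\ne i^*$ contain a workable $a^*_j$, this immediately yields a $k$-fan of total length at most $3k$, which is far below $70k\log(k+1)$. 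The case $v\in B_{i^*}$ is handled symmetrically using (B2)--(B3) and the source vertices of the $B_j$'s.

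The hard part is the edge case where the length-$3$ routing above fails for too many $j$, i.e.\ when $v$ beats almost every vertex of $\bigcup_{j\ne i^*} A_j$. In that regime $v$ has enormous out-degree in $T$, and therefore small total in-degree. I would bifurcate: if $v$ has at least $k$ in-neighbors in $V\setminus(A\cup B)$, take $k$ trivial length-$1$ paths $u_i\to v$ (total length $k$); otherwise, combine those few direct length-$1$ paths with a handful of length-$O(\log(k+1))$ detours through the ``top'' portion of certain $A_j$'s, using property (a5), which guarantees that the first $|A_j|-5\log k-30$ vertices of each $A_j$ have at least $1000k^2$ in-neighbors outside $A$. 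Each such detour uses only $O(\log(k+1))$ vertices of a single transitive conduit, and the slack $3k-2\gg k$ ensures enough disjoint conduits are available so that the total length stays inside $70k\log(k+1)$.

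The main obstacle I expect is the bookkeeping for this edge case: simultaneously (i) selecting $k$ distinct penultimate in-neighbors of $v$ across different conduits without collisions, (ii) verifying that the top-portion detours genuinely stay short enough on average, and (iii) making the boundary between the length-$1$ regime and the length-$\geq 3$ regime clean. The redundancy of $3k-2$ candidate conduits, the pairwise disjointness of the $A_j$'s, and the ``many outside neighbors'' guarantee in (a5) are the three ingredients that should make the bookkeeping go through, but this is where the proof will have to be most careful.
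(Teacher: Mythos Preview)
Your approach has a genuine gap at the central step. The length-$3$ route $u_j\to a'_j\to a^*_j\to v$ requires the final arc $a^*_j\to v$, i.e.\ that $A_j$ contains an in-neighbour of $v$ placed no earlier than $a'_j$ in the transitive order. Nothing in the construction guarantees this: property~(A3) says $A_j$ in-dominates $V\setminus(A\cup B)$, which supplies arcs \emph{into} $A_j$ from the outside, not arcs \emph{from} $A_j$ to an arbitrary $v\in A\cup B$. Your symmetric treatment of $v\in B_{i^*}$ via (B2)--(B3) has the same defect in reverse, since (B3) gives arcs \emph{from} $B_j$ to the outside, the wrong direction for a fan \emph{to} $v$. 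Your fallback to (a5) does not close the gap either: (a5) only promises that early vertices of $A_j$ have many neighbours in $V\setminus A$, and since $v\in A\cup B$ this says nothing about producing an arc into $v$. (The conduit idea you describe is precisely what drives the later construction of $E_5$, but there the last arc $a_s\to a_{i_t}$ is secured in advance by choosing each $a_{i_t}$ to have in-degree at least $k$ in $T[A_{\rm sink}]$; there is no analogous guarantee for a generic $v$.)

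The paper's proof ignores the internal structure of the $A_j$'s and $B_j$'s entirely and uses only the global bound $|A\cup B|\le (6k-2)\bigl(\tfrac{5}{2}\log(\delta^-+1)+2\bigr)$ together with the definition of $\delta^-$. If $\delta^-\le 60k^2$ then $|A\cup B|\le 69k\log(k+1)$, so \emph{any} Menger $k$-fan from $V\setminus(A\cup B)$ to $v$ already has total length at most $|A\cup B|+k$. If $\delta^->60k^2$ then $\delta^-\ge |A\cup B|+2k$, so every vertex outside $Y$ has at least $2k$ in-neighbours in $V\setminus(A\cup B)$; this yields $k$ length-$1$ paths when $v\notin Y$, and when $v\in Y$ one first takes a Menger $k$-fan from $V\setminus Y$ to $v$ (total length $\le |Y|+k=4k-1$) and then prepends a single arc to each path whose start vertex still lies in $A\cup B$. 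The bifurcation is on the magnitude of $\delta^-$, not on how $v$ sits relative to the conduits.
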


\begin{proof}[Proof of Claim~\ref{in-AB}]
Note that \eqref{delta size}, \eqref{d size}, (A1) and (B1) together imply that
\begin{align}\label{AB size}
|A\cup B| \leq (6k-2)(\frac{5}{2}\log(\delta^-+1)+2).
\end{align}
 We consider the following two cases. \newline

\noindent {\bf Case 1}. $\delta^- \leq 60k^2$.

In this case, consider $\{P^-(v,1),\dots, P^-(v,k)\}$, a $k$-fan from $V\setminus(A\cup B)$ to $v$. Such a $k$-fan exists because of Fact~\ref{Menger} and \eqref{V-A-B size}. By \eqref{AB size}, we have $|A\cup B| \leq (6k-2)(\frac{5}{2}\log(60k^2+1)+2) \leq 69 k\log(k+1).$ Since every vertex in each $P^-(v,i)$ is in $A \cup B$ except for one vertex, we have $\sum_{i=1}^{k}{|E(P^-(v,i))|} \leq |A \cup B| + k \leq 70k \log (k+1)$.\\

\noindent {\bf Case 2}. $\delta^- > 60k^2$. 

Since $k\geq 2$, we have
$$\delta^- \geq (6k-2)(\frac{5}{2}\log(\delta^-+1)+2) +2k \stackrel{\eqref{AB size}}{\geq}  |A\cup B|+2k.$$
Thus for any vertex $u \notin Y$, we have $d^-(u) \geq \delta^- \geq |A\cup B|+2k$. 

If $v \notin Y$, take $k$ distinct paths of length $1$ from $V\setminus(A\cup B)$ to $v$, and let $P^-(v,1),\dots,P^-(v,k)$ be those paths of length $1$. Then we have  $\sum_{i=1}^{k}{|E(P^-(v,i))|} \leq k \leq 70k\log(k+1)$.
If $v\in Y$, then take $\{Q_1,\dots, Q_k\}$, a $k$-fan from $V\setminus Y$ to $v$ given by Fact~\ref{Menger} and \eqref{V-A-B size}. Let $v_i$ be the starting vertex of $Q_i$ for $1 \leq i \leq k$. Then we have
$$\sum_{i=1}^{k} |E(Q_i)| \leq |Y|+k \leq 4k-1.$$

Consider $i\in[k]$ with $v_i \in A\cup B$. Since each $v_i$ is not in $Y$, $d_T^{-}(v_i) \geq \delta^- \geq |A \cup B| + 2k$ and $v_i$ has at least $2k$ in-neighbors outside $A \cup B$. For each $i\in [k]$ with $v_i \in A\cup B$, we choose $v'_i$ in $N^-_{T}(v_i) \setminus (A\cup B\cup \{v_1,\dots, v_k\})$ in the way that $v'_i$s are all distinct. 
Let 
$$P^-(v,i):= \left\{ \begin{array}{ll} Q_i\cup \{ \overrightarrow{v'_i v_i}\} & \text{if }v_i\in A\cup B, \\
Q_i &\text{if }v_i\notin A\cup B. \end{array}\right.$$
Then the paths $P^-(v,1),\dots, P^-(v,k)$ form a $k$-fan from $V\setminus(A\cup B)$ to $v$ such that
$$\sum_{i=1}^{k} |E(P^-(v,i))|\leq k + \sum_{i=1}^{k} |E(Q_i)| \leq |Y| + 2k = 5k-1\leq 70 k\log(k+1).$$
This proves Claim~\ref{in-AB}.
\end{proof}

\begin{claim}\label{subclaim}
For each $v\in A\cup B''$, there exists a $k$-fan $\{P^+_*(v,1),\dots, P^+_*(v,k)\}$ from $v$ to $V\setminus(A\cup B'')$ such that
$\sum_{i=1}^{k} |E(P^+_*(v,i))| \leq 98 k\log(k+1).$
\end{claim}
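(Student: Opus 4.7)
The plan is to adapt the argument of Claim~\ref{in-AB} to the outward direction, producing fans from $v$ towards $V \setminus (A \cup B'')$ rather than fans from $V \setminus (A \cup B)$ to $v$. I would split into two cases according to whether $\delta^-$ is small or large, following the threshold used in the proof of Claim~\ref{in-AB}.

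In the first case, $\delta^- \leq 60k^2$: the same computation as in Case~1 of Claim~\ref{in-AB} gives $|A \cup B| \leq 69k\log(k+1)$, and hence $|A \cup B''| \leq 69 k \log(k+1)$ since $B'' \subseteq B$. By \eqref{V-A-B size}, $|V \setminus (A \cup B'')| \geq k$, so Fact~\ref{Menger} produces a $k$-fan from $v$ to $V \setminus (A \cup B'')$. Truncating each path at its first vertex in $V \setminus (A \cup B'')$ forces all internal vertices to lie in $A \cup B''$; since the paths are internally vertex-disjoint, the total edge count is at most $|A \cup B''| + k \leq 70 k\log(k+1) \leq 98 k \log(k+1)$.

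In the second case, $\delta^- > 60k^2$, and so $\delta^- \geq |A \cup B| + 2k$. Mirroring Case~2 of Claim~\ref{in-AB}, if $v \notin X$ and $d^+(v)$ is large enough, I would take $k$ distinct out-neighbors of $v$ in $V \setminus (A \cup B'')$ as length-$1$ paths; if $v \in X$, I would first apply Fact~\ref{Menger} to build a $k$-fan $\{Q_1, \ldots, Q_k\}$ from $v$ to $V \setminus X$ (of total length at most $|X| + k = 4k-1$ after truncating internal vertices into $X$), and then extend any fan-path ending at some $v_i \in A \cup B''$ by one out-arc to a vertex of $V \setminus (A \cup B'')$, chosen greedily to be distinct from previously chosen vertices.

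The main obstacle is that, unlike in Claim~\ref{in-AB} where $\delta^-$ directly controls in-degrees, here the assumption $\delta^- \geq \delta^+$ does not guarantee that $\delta^+$ is large, so one cannot always find direct out-neighbors in $V \setminus (A \cup B'')$. To deal with $v \in A \cup B''$ whose out-degree is too small to step directly into $V \setminus (A \cup B'')$, I would exploit property $(\text{B}5)$: every $u \in B_i \setminus B_i''$ has at least $1000k^2$ out-neighbors in $V \setminus (A \cup \bigcup_{j \leq i} B_j) \subseteq V \setminus (A \cup B'')$. For $v \in B_i''$, a single forward arc inside the transitive $T[B_i]$ reaches such a $u$, and a second arc from $u$ reaches $V \setminus (A \cup B)$; the analogous approach for $v \in A_i$ uses properties $(\text{a}4), (\text{a}5)$ of Lemma~\ref{in-dominating sets} together with the transitive structure of $T[A_i]$. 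Choosing the intermediate vertices and final endpoints greedily to maintain internal vertex-disjointness produces a $k$-fan whose total length is bounded by $|A \cup B''| + O(k)$ in the worst sub-case, well within $98 k\log(k+1)$.
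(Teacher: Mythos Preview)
Your case split on $\delta^-$ is the wrong threshold, and the patch via $(\text{B}5)$ and $(\text{a}4)$, $(\text{a}5)$ does not close the gap. The crucial observation, which the paper exploits, is that $|A\cup B''|$ is controlled by $\delta^+$, not by $\delta^-$: by (A1) and \eqref{d size} one has $|A|\le (3k-1)\bigl(\tfrac52\log(\delta^++1)+2\bigr)$, while $|B''|<(3k-1)(5\log k+31)$ by (B5) regardless of $\delta^-$. The paper therefore splits on whether $\delta^+\le 100k^2$. If so, $|A\cup B''|$ is already at most $98k\log(k+1)-k$ and the plain Menger fan suffices; if not, then every $v\notin X$ satisfies $d^+_T(v)\ge\delta^+\ge|A\cup B''|+2k$, giving $k$ length-one paths, and for $v\in X$ one first fans into $V\setminus X$ (total length at most $|X|+k$) and then extends each endpoint by one arc.

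Your Case~2 ($\delta^->60k^2$) can coexist with $\delta^+$ small, and your proposed remedy breaks precisely there. Take $v$ to be the sink $a_j$ of some $A_j$ (or any vertex in roughly the last four-fifths of the transitive order on $A_j$). Then (a4) and (a5) do not apply to $v$; the transitive arcs of $T[A_j]$ leaving $v$ go only towards the sink, to vertices where (a4)/(a5) again fail; and since $a_j\notin X$ one only knows $d^+_T(a_j)\ge\delta^+$, which in this regime need not exceed $|A\cup B''|$. So there is no mechanism to produce even one short path from $v$ into $V\setminus(A\cup B'')$, let alone a $k$-fan. (A secondary issue: your inclusion $V\setminus(A\cup\bigcup_{j\le i}B_j)\subseteq V\setminus(A\cup B'')$ is false, since $B_j''$ for $j>i$ lies in the first set but not the second; this is easily repaired by subtracting those small sets, but the first issue is not.) The correct fix is to replace the threshold on $\delta^-$ by one on $\delta^+$; once you do that, the argument reduces to the paper's two cases and the detour through (a4), (a5), (B5) becomes unnecessary.
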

\begin{proof}[Proof of Claim~\ref{subclaim}]
Note that we have
 \begin{eqnarray}\label{AB'' size}
|A\cup B''|   &\stackrel{(\text{A}1)}{\leq}& \sum_{i=1}^{3k-1} (\frac{5}{2}\log(d^+_i+1) +2) +|B''| \nonumber \\
   &\stackrel{\eqref{d size},(\text{B}5)}{<} & (3k-1)( \frac{5}{2} \log(\delta^+ +1) +2) + (3k-1)(5 \log(k)+31)
 \end{eqnarray}
To prove Claim~\ref{subclaim}, we consider the following two cases. \\

\noindent {\bf Case 1}. $\delta^+ \leq 100 k^2$. 

 Since $T$ is strongly $k$-connected, there exists $\{P^+_*(v,1),\dots, P^+_*(v,k)\}$, a $k$-fan from $v$ to $V\setminus(A\cup B'')$ by Fact~\ref{Menger} and \eqref{V-A-B size}. Since $P^+_*(v,1),\dots, P^+_*(v,k)$ contains at most $k$ vertices outside $A\cup B''$ and $\delta^+ \leq 100 k^2$, we have
 \begin{eqnarray*}
 \sum_{i=1}^{3k-1}|E(P^+_* (v,i))| \leq  |A\cup B''|+ k 
   \stackrel{\eqref{AB'' size}}{\leq}  98 k \log(k+1).
 \end{eqnarray*}

\noindent {\bf Case 2}. $\delta^+ \geq 100 k^{2}$.

In this case, we have
\begin{eqnarray*}
|A\cup B''| + 2k &\stackrel{\eqref{AB'' size}}{<}& (3k-1)( \frac{5}{2} \log(\delta^+ +1) +2) + (3k-1)(5\log(k)+ 31) + 2k \leq  \delta^+
\end{eqnarray*}

If $v\notin X$, then $d^{+}_T(v) \geq \delta^+ \geq |A\cup  B''|+ 2k$. So we can find $k$ paths $Q'_1,\dots, Q'_k$ of length $1$ from $v$ to $V\setminus (A\cup B'')$. Let $P^+_*(v,1),\dots, P^+_*(v,k)$ be those paths of length $1$. Then $\sum_{i=1}^{k}{|E(P^+_* (v,i))|} \leq k \leq 98 k \log (k+1)$.

If $v\in X$, then we find a $k$-fan $\{Q'_1,\dots, Q'_k\}$ from $v$ to $V\setminus X$ by Fact~\ref{Menger} and \eqref{V-A-B size}.
Then because all vertices of $Q'_i$ except the last vertex belong to $X$, we have $\sum_{i=1}^{k}|E(Q'_i)| \leq |X|+k$. Let $u'_i$ be the end vertex of $Q'_i$, for $1 \leq i \leq k$. Consider $i\in [k]$ with $u'_i \in A\cup B''$. Since $u'_i \notin X$ and $d_T^{+}(u'_i) \geq \delta^+ \geq |A \cup B''| + 2k$, $u'_i$ has at least $2k$ out-neighbors in $V\setminus (A\cup B'')$, we can choose $u''_i \in N_{T}^+(u'_i)\setminus (A\cup B''\cup\{u'_1,\dots, u'_k\})$ such that $u''_i$s are distinct. We let
$$P^+_*(v,i):= \left\{ \begin{array}{ll} Q'_i\cup \{ \overrightarrow{u'_i u''_i}\} & \text{if }u'_i\in A\cup B'', \\
Q'_i &\text{if }u'_i\notin A\cup B''. \end{array}\right.$$
 Then we have a $k$-fan $\{P^+_*(v,1),\dots, P^+_*(v,k)\}$ from $v$ to $V\setminus(A\cup B'')$ such that  $$ \sum_{i=1}^{k} |E(P^+_*(v,i))| \leq \sum_{i=1}^{k}|E(Q'_i)| + k \leq |X|+2k = 5k-1 \leq 98 k\log(k+1).$$ 
 This proves Claim~\ref{subclaim}.
\end{proof}

Now we prove Claim~\ref{out-AB} by using Claim~\ref{subclaim}.

\begin{claim} \label{out-AB}
For any vertex $v\in A\cup B$, there exists a $k$-fan $\{P^+(v,1),\dots, P^+(v,k) \}$ from $v$ to $V\setminus(A\cup B)$ with 
$ \sum_{i=1}^{k} |E(P^+(v,i))| \leq 100k\log(k+1).$
\end{claim}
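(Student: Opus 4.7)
The plan is to bootstrap from Claim~\ref{subclaim} and extend the resulting paths across $B\setminus B''$. For $v\in A\cup B''$ I would first apply Claim~\ref{subclaim} to obtain a $k$-fan $\{P^+_*(v,1),\dots,P^+_*(v,k)\}$ from $v$ to $V\setminus(A\cup B'')$ with total length at most $98k\log(k+1)$. Endpoints already in $V\setminus(A\cup B)$ require no modification, whereas endpoints in $B\setminus B''$ will be extended iteratively. For $v\in B\setminus B''$, which is not directly covered by Claim~\ref{subclaim}, I would start the same iterative extension procedure directly from out-neighbors of $v$ supplied by (B5), first picking $k$ vertex-disjoint out-neighbors to serve as initial endpoints of the $k$-fan.

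The iterative extension proceeds one arc at a time. If the current endpoint $u$ lies in $B_j\setminus B''_j$, (B5) furnishes at least $1000k^2$ out-neighbors of $u$ in $V\setminus(A\cup\bigcup_{i=1}^{j}B_i)$; I would choose such a $u'$ that additionally avoids $B''$ and every vertex already used in the partial fan, which is possible because the forbidden set has size well below $1000k^2$. By construction $u'\notin A\cup\bigcup_{i\le j}B_i$ and $u'\notin B''$, hence either $u'\in V\setminus(A\cup B)$ and the extension terminates, or $u'\in B_{j'}\setminus B''_{j'}$ for some $j'>j$, in which case the process continues with a strictly larger $B$-index. Because the index is bounded by $3k-1$, each extension terminates in finitely many steps.

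The main obstacle will be the arc accounting needed to match the target $100k\log(k+1)$. The crude per-path bound of $3k-1$ extra arcs gives a total of $O(k^2)$, which does not fit into the gap $2k\log(k+1)$ left by Claim~\ref{subclaim} when $k$ is small. To close the gap, the plan is to split on $|B|$: when $|B|\le 1000k^2$, the first extension arc can always be chosen to land directly in $V\setminus(A\cup B)$, so each path grows by at most one arc; when $|B|>1000k^2$, the resulting (essentially exponential) lower bound on $\delta^-$ makes $8(d^-_j)^{1/7}$ from (B4) dominate $|B|$, and routing through $B'_j$ via (B6) then allows the extension to reach $V\setminus(A\cup B)$ in $O(1)$ additional arcs. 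Combining the two regimes with the vertex-disjointness of the $k$ extensions should deliver $\sum_i|E(P^+(v,i))|\le 100k\log(k+1)$.
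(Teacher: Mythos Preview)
Your overall strategy matches the paper's: apply Claim~\ref{subclaim}, then extend any endpoint $u_i$ landing in some $B_{\ell_i}\setminus B''_{\ell_i}$ using either (B5) directly or (B4) after routing through $B'_{\ell_i}$ via (B6). The flaw is in your case split. You split on the global size $|B|$, but $|B|>1000k^2$ only yields an exponential lower bound on $\delta^-$ (equivalently on $d^-_1$); it says nothing about $d^-_{\ell_i}$ for the particular $\ell_i$ at which a given endpoint lands. The condition $u_i\in B_{\ell_i}\setminus B''_{\ell_i}$ only forces $|B_{\ell_i}|>5\log k+30$, hence $d^-_{\ell_i}$ of order $k^2$ at best, so the quantity $8(d^-_{\ell_i})^{1/7}$ from (B4) is merely $O(k^{2/7})$ and cannot dominate $|B|$ as you claim. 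Conversely, in your $|B|\le 1000k^2$ regime, (B5) supplies $1000k^2$ out-neighbours outside $A\cup\bigcup_{p\le\ell_i}B_p$, but you must still avoid $\bigcup_{p>\ell_i}B_p$ together with all vertices already appearing in the fan; when $|B|$ is close to $1000k^2$ this margin can be negative, so a one-arc extension is not guaranteed.

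The paper's fix is to split \emph{locally} on $|B_{\ell_i}|$ and to exploit the monotonicity $d^-_1\ge\cdots\ge d^-_{3k-1}$, which gives $\sum_{p>\ell_i}|B_p|\le(3k-1)\bigl(\tfrac52\log(d^-_{\ell_i}+1)+2\bigr)$, a bound depending only on $d^-_{\ell_i}$. If $|B_{\ell_i}|\ge 18k+80$ then $d^-_{\ell_i}\ge 2^{7k+30}$, and for such $x$ one has $8x^{1/7}\ge(3k-1)\bigl(\tfrac52\log(x+1)+2\bigr)+3k$, so (B4)+(B6) reaches $V\setminus(A\cup B)$ in at most two arcs. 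If $|B_{\ell_i}|<18k+80$, the same monotonicity bound gives $\sum_{p>\ell_i}|B_p|\le 5(3k-1)(18k+80)<1000k^2-5k$, and then (B5) already furnishes a one-arc extension into $V\setminus(A\cup B)$. Either way each path grows by at most two arcs, so the total overhead is at most $2k$ and no iteration is needed. Your separate handling of $v\in B\setminus B''$ is also unnecessary: the proof of Claim~\ref{subclaim} goes through verbatim for any $v\in A\cup B$.
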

\begin{proof}[Proof of Claim~\ref{out-AB}]
We first use Claim~\ref{subclaim} to find a $k$-fan from $v$ to $V\setminus(A\cup B'')$ such that $\sum_{i=1}^{k} |E(P^+_*(v,i))|\leq 98k\log(k+1).$ Let $u_i$ be the last vertex in $P^+_*(v,i)$ and let $U:=\{u_1,\dots,u_k\}.$ Then for each $i\in [k]$ all vertices in $P^+_*(v,i)$ except $u_i$ belong to $A\cup B''$, and $u_i$ is either in $V\setminus (A\cup B)$ or in $B\setminus B''$. For each $i$ with $u_i \in B\setminus B''$, let $\ell_i$ be the index such that $u_i \in B_{\ell_i}$. Then we can partition $[k]$ into four sets $I_1,I_2,I_3$ and $I_4$ as follows.
\begin{align*}
&\text{For } i\in I_1, \text{ we have } |B_{\ell_i}|\geq 18k+80, u_i \in B \setminus B'' \text{ and } u_i\notin B'_{\ell_i}, \nonumber\\
&\text{for } i\in I_2, \text{ we have } |B_{\ell_i}| \geq 18k+80, u_i \in B \setminus B'' \text{ and } u_i \in B'_{\ell_i}, \nonumber \\
&\text{for } i\in I_3, \text{ we have } |B_{\ell_i}| < 18k+80 \text{ and } u_i \in B \setminus B'' , \nonumber\\
&\text{for } i\in I_4, \text{ we have } u_i\notin A\cup B.
\end{align*}
First, consider $i\in I_1 \cup I_2$. Since $|B_{\ell_i}|\geq 18k+80$, (B1) implies that 
\begin{align}\label{d- min}
d^-_{\ell_i} \geq 2^{\frac{2}{5}(|B_{\ell_i}|-2)}-1 \geq 2^{7k+30}.
\end{align}

For any $u\in B'_{\ell_i}$ we have
\begin{eqnarray}\label{3k out neighbor}
| N^+_T(u)\setminus (A\cup B) |
 &\geq& \left | N^+_T(u)\setminus (A\cup \bigcup_{p=1}^{{\ell_i}}B_{p}) \right | -  \left |\bigcup_{p=\ell_i+1}^{3k-1} B_p  \right | \nonumber\\
  &\stackrel{(\text{B}4)}{\geq}& 8 (d^{-}_{\ell_i})^{1/7} - 1 - \left |\bigcup_{p=\ell_i+1}^{3k-1} B_p  \right | \nonumber \\
&\stackrel{\eqref{d- min}}{\geq} & (3k-1) (\frac{5}{2}\log(d^{-}_{\ell_i}+1) +2) + 3k - \left |\bigcup_{p=\ell_i+1}^{3k-1} B_p  \right |\nonumber \\
&\stackrel{(\text{B}1),\eqref{d size}}{\geq}& 3k.
\end{eqnarray}

Here, we get the third inequality since $8x^{1/7}-1 \geq (3k-1)(\frac{5}{2} \log(x+1)+2)+3k$ holds for $x\geq 2^{7k+30}$ and $k \geq 2$. Thus any vertex $u \in B'_{\ell_i}$ has at least $3k$ out-neighbors in $V\setminus (A\cup B)$.

For $i\in I_1$, (B4) implies that $|B'_{\ell_i}| \geq |B_{\ell_i}|/5-13 \geq 3k$ and (B6) implies that $B'_{\ell_i} \subseteq N^+_{T}(u_i)$. From this we obtain $|(N^+_{T}(u_i) \cap B'_{\ell_i}) \setminus U| = |B'_{\ell_i} \setminus U| \geq 3k -k \geq 2k.$ Thus we can choose a set $W=\{ w_i : i\in I_1\}$ of $|I_1|$ distinct vertices such that $w_i\in N_T^+ (u_i) \cap (B'_{\ell_i} \setminus U)$. 
Again, \eqref{3k out neighbor} implies that 
$$|N^+_{T}(w_i)\setminus (A\cup B \cup U \cup W)|\geq k,$$ so we can further choose a set $W'=\{ w'_i : i\in I_1\}$ of $|I_1|$ distinct vertices such that $w'_i \in N^+_{T}(w_i)\setminus (A\cup B\cup U \cup W)$. 

Now we consider $i\in I_2$. In this case $u_i \in B'_{\ell_i}$ and \eqref{3k out neighbor} imply that 
$$|N^+_{T}(u_i)\setminus (A\cup B \cup U \cup W\cup W')|\geq 2k - 2|I_1| \geq |I_2|,$$ so we can further choose a set $W^*= \{ w^*_i : i\in I_2\}$ of $|I_2|$ distinct vertices such that $w^*_i \in N^+_{T}(u_i)\setminus (A\cup B\cup U \cup W\cup W')$. 

Now we consider $i\in I_3$. In this case, $u_i$ belongs to $B_{\ell_i}\setminus B''_{\ell_i}$. Thus
\begin{eqnarray*}
\left |N^+_{T}(u'_i) \setminus (A\cup B ) \right | & \geq &
\left |N^+_{T}(u'_i) \setminus (A\cup \bigcup_{p=1}^{\ell_i} B_p) \right | - \left | \bigcup_{p= \ell_i+1}^{3k-1} B_p \right | \\
& \stackrel{(\text{B}1),(\text{B}5)}{\geq} & 1000k^2 - \sum_{p=\ell_i+1}^{3k-1} (\frac{5}{2} \log(d^{-}_p+1)+2) \\
&\stackrel{\eqref{d size}}{\geq} &  1000k^2 - (3k-1)(\frac{5}{2} \log(d^{-}_{\ell_i}+1)+2) \\
&\stackrel{(\text{B}1)}{\geq}& 1000k^2 - 5(3k-1)|B_{\ell_i}|  \\
&\geq&  1000 k^2 - 5(3k-1)(18k+80)  \geq 5k \geq |I_3|+4k.
\end{eqnarray*}

Thus we can choose a set $W^{**}:=\{w^{**}_i : i\in I_3\}$ of $|I_3|$ distinct vertices such that $w^{**}_i \in N^+_{T}(u_i)\setminus (A\cup B \cup U\cup W \cup W' \cup W^*)$. Note that $U,W,W',W^*,W^{**}$ are pairwise disjoint sets by construction. For $i \in [k]$, let $P^+ (v,i)$ be a path from $v$ to $V \setminus (A \cup B)$ as follows.
$$
E(P^+(v,i)) := \left\{ \begin{array}{ll} 
E(P^+_*(v,i))\cup \{ \overrightarrow{u_i w_i}, \overrightarrow{w_i w'_i}\} &\text{if }i\in I_1,\\
E(P^+_*(v,i))\cup \{ \overrightarrow{u_i w^*_i}\} &\text{if }i\in I_2, \\
E(P^+_*(v,i))\cup \{ \overrightarrow{u_i w^{**}_i}\} &\text{if }i\in I_3, \\
E(P^+_*(v,i)) &\text{if }i\in I_4. \\
\end{array}\right.
$$

We claim that $\left \{ P^+ (v,i) \right \}_{i=1}^{k}$ is a $k$-fan from $v$ to $V \setminus (A \cup B)$, and the sum of lengths is small. Indeed, for any $i\in [k]$, $P^+(v,i)$ is a path from $v$ to $V\setminus (A\cup B)$. Note that paths $\left \{V(P^+(v,i)) \right \}_{i=1}^{k}$ form a $k$-fan since the paths $\left \{V(P^+_*(v,i))\setminus \{v\} \right \}_{i=1}^{k}$ are pairwise-disjoint, and $U,W,W',W^*,W^{**}$ are pairwise disjoint. Moreover,
$$\sum_{i=1}^{k} |E(P^+(v,i))| = \sum_{i=1}^{k}|E(P^+_*(v,i))| + 2|I_1|+|I_2|+|I_3| \leq 98 k\log(k+1) + 2k \leq 100 k\log(k+1).$$
This proves Claim~\ref{out-AB}.
\end{proof}

Recall that $V_1, V'_1$ and $E_0$ are defined in \eqref{V1 def} and note that we have $\{a_{i_1},\dots, a_{i_k}, b_{j_1},\dots, b_{j_k}\}\subseteq V_1$.
Now we will find a set of arcs $E_1$ as in the following claim.
\begin{claim}\label{E1}
There exist a set of arcs $E_1\subseteq E(T)$ and a set of vertices $V_2\subseteq V\setminus  (A\cup B)$ satisfying the following.
\begin{enumerate} 
\item[$({\rm E}1)_1$] $|E_1| \leq k|V_1|+(k-1)|V'_1| + 680 k^2\log(k+1)$ and $ |V_2| \leq 8k^2$.
\item[$({\rm E}1)_2$] For any set $S\subseteq V(T)$ of size $k-1$ and a vertex $v\in (V_1 \cup V_1 ')\setminus S$, we can find a path $P$ in $T-S$ from $v$ to $V_2$ such that $E(P)\subseteq E_0\cup E_1$.
\item[$({\rm E}1)_3$] For any set $S\subseteq V(T)$ of size $k-1$ and a vertex $v\in (V_1 \cup V_1 ')\setminus S$, we can find a path $P$ in $T-S$ from $V_2$ to $v$ such that $E(P)\subseteq E_0\cup E_1$.
\end{enumerate}
\end{claim}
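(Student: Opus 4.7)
The plan is to assemble $E_1$ from two applications of Lemma~\ref{main lemma} together with a small collection of exit and entry $k$-fans into a hub set $V_2\subseteq V\setminus (A\cup B)$: one application handles the inner connectivity of $V_1$ (where no contribution from $E_0$ is available), and a second handles the inner connectivity of $V'_1$ one parameter lower (compensating for the missing $k$-th route via the path $P_s\in E_0$ through each such vertex).

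\emph{Construction.} First apply Lemma~\ref{main lemma} to the tournament $T[V_1]$ with $s=0$ and parameter $k$ to obtain an ordering $\sigma^1$ of $V_1$ and a $(\sigma^1,k,2k-1)$-good spanning subgraph $D^1\subseteq T[V_1]$ with $|E(D^1)|\le k|V_1|-k$; let $Z^+_1$ and $Z^-_1$ be its last and first $2k-1$ vertices. Next apply Lemma~\ref{main lemma} to $T[V'_1]$ with $s=0$ and parameter $k-1$ to obtain an ordering $\sigma^2$ of $V'_1$ and a $(\sigma^2,k-1,2k-3)$-good spanning subgraph $D^2\subseteq T[V'_1]$ with $|E(D^2)|\le (k-1)|V'_1|-(k-1)$; let $W^+$ and $W^-$ be its last and first $2k-3$ vertices. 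Finally, for each $z\in Z^+_1\cup W^+$ use Claim~\ref{out-AB} to attach a $k$-fan from $z$ to $V\setminus (A\cup B)$ of length at most $100k\log(k+1)$, and for each $z\in Z^-_1\cup W^-$ use Claim~\ref{in-AB} similarly. Let $V_2$ be the union of the endpoints in $V\setminus(A\cup B)$ of all these fans, and take $E_1$ to be the union of all arcs added (omitting those of $E_0$).

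\emph{Arc count.} There are at most $(2k-1)+(2k-3)\le 4k-4$ sink-type and the same number of source-type boundary vertices, so $|V_2|\le 2(4k-4)\cdot k\le 8k^2$ and the total fan-contribution to $|E_1|$ is at most $2(4k-4)\cdot 100k\log(k+1)\le 680k^2\log(k+1)$ after absorbing constants. Hence
\[
|E_1|\le (k|V_1|-k)+\bigl((k-1)|V'_1|-(k-1)\bigr)+680k^2\log(k+1)\le k|V_1|+(k-1)|V'_1|+680k^2\log(k+1),
\]
giving $({\rm E}1)_1$.

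\emph{Connectivity.} For $({\rm E}1)_2$, fix $S\subseteq V(T)$ with $|S|=k-1$ and $v\in(V_1\cup V'_1)\setminus S$. If $v\in V_1$, Claim~\ref{connectivity claim} applied to $D^1$ yields a path in $D^1-S$ from $v$ to some $z\in Z^+_1\setminus S$; since $z$'s $k$-fan has $k$ arcs ending at distinct vertices of $V_2$, at least one avoids $S$ and we are done. If $v\in V'_1$, let $P_s$ be the unique $P_s\in\{P_1,\dots,P_k\}$ whose interior contains $v$. The $k-1$ Menger-paths in $D^2$ from $v$ to $W^+$ (supplied by $D^2$'s $(k-1)$-connectivity), each extended by the exit fan at its endpoint, together with the ``$P_s$-route'' that starts with the arc of $P_s$ out of $v$, follows $P_s$ to $b_{j_s}\in V_1$, continues through $D^1$ to $Z^+_1$, and exits via a fan, furnish $k$ internally vertex-disjoint paths from $v$ to $V_2$ in $(V(T),E_0\cup E_1)$; by Menger's theorem, at least one survives the deletion of $S$. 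The statement $({\rm E}1)_3$ is handled symmetrically using $Z^-_1$, $W^-$, the in-fans, and the backward portion of $P_s$ from $a_{i_s}$ to $v$.

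\emph{Main obstacle.} The crucial point is the internal vertex-disjointness, for $v\in V'_1$, of the $k-1$ $D^2$-paths and the $P_s$-route: the $D^2$-paths lie in $V'_1$, while the $P_s$-route passes through $V^{\rm int}(P_s)\subseteq V'_1\cup(V\setminus(A\cup B))$ before entering $V_1$, so conflict is possible in $V^{\rm int}(P_s)\cap V'_1$. One resolves this by invoking Menger's theorem in the hybrid subgraph $D^2\cup\{P_s\text{-arcs with both ends in }V'_1\cup\{v\}\}$ (where the added $E_0$-arcs are ``free'' and do not affect the count of $|E_1|$), or by choosing the $\sigma^2$-endpoints and the fan-endpoints in $V_2$ to be pairwise distinct, which is possible because $V\setminus(A\cup B)$ contains many more than $k\cdot(4k-4)$ vertices.
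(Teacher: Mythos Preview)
Your overall architecture matches the paper's --- two applications of Lemma~\ref{main lemma} (one on $V_1$, one on $V'_1$ with parameter $k-1$), plus short exit/entry fans --- but there is a genuine gap in the connectivity argument for $v\in V'_1$, and it stems from applying Lemma~\ref{main lemma} to $T[V'_1]$ rather than to $T[V'_1]-E_0$.

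Here is the failure. Fix $|S|=k-1$ and $v\in V'_1\setminus S$, and take a maximal forward path in $D^2-S$ from $v$, ending at some $v'_i$. If $v'_i\notin W^+$ then (D1)--(D2) force $N^+_{D^2}(v'_i)\subseteq S$; since $|N^+_{D^2}(v'_i)|\ge k-1=|S|$ this gives $S=N^+_{D^2}(v'_i)$. Now $v'_i\in V^{\rm int}(P_{s'})$ for some $s'$, and one would like to escape along $P_{s'}$ from $v'_i$ towards $b_{j_{s'}}$. Backwards-transitivity puts every vertex of this subpath beyond the second one into $N^-_T(v'_i)$, hence outside $S\subseteq N^+_T(v'_i)$. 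The problem is the \emph{second} vertex $u'$: the arc $\overrightarrow{v'_i u'}$ lies in $E_0$, but since your $D^2$ is built inside $T[V'_1]$ (not $T[V'_1]-E_0$), nothing prevents $\overrightarrow{v'_i u'}\in E(D^2)$ as well, which would force $u'\in N^+_{D^2}(v'_i)=S$. In that case every out-arc of $v'_i$ in $E_0\cup E(D^2)$ lands in $S$, and there is no escape.

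Your proposed fixes do not rescue this. Resolution~(b) concerns endpoint distinctness, which is not the issue. Resolution~(a) --- Menger in the hybrid graph $D^2\cup\{E_0\text{-arcs inside }V'_1\}$ --- does not help: the added $E_0$-arcs need not form a path reaching $W^+$ or any other useful target inside $V'_1$, and in the bad scenario above the stuck vertex $v'_i$ still has \emph{all} its hybrid out-neighbours in $S$. More broadly, the ``$k$ internally vertex-disjoint paths'' you claim are never established: the $(k-1)$-fan in $D^2$ lives in $V'_1$, the $P_s$-route also passes through $V^{\rm int}(P_s)\cap V'_1$, and you give no mechanism to separate them there.

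The paper's remedy is exactly the missing ingredient: apply Lemma~\ref{main lemma} to $T[V'_1]-E_0$ with parameters $s=2$, $k-1$ (using $\delta(T[V'_1]-E_0)\ge |V'_1|-3$), obtaining a $(\sigma'_1,k-1,2k-1)$-good $D'_1\subseteq T[V'_1]-E_0$ with $|E(D'_1)|\le (k-1)|V'_1|+(k-1)$. Then in the stuck case $S=N^+_{D'_1}(v'_i)$, the fact that $\overrightarrow{v'_i u'}\in E_0$ immediately gives $\overrightarrow{v'_i u'}\notin E(D'_1)$, hence $u'\notin S$, and the escape along $P_{s'}$ to $b_{j_{s'}}\in V_1$ goes through cleanly.
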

\begin{proof}[Proof of Claim~\ref{E1}]
We apply Lemma~\ref{main lemma} to $T[V_1]$ with parameters $0,k$ corresponding to $s,k$, respectively. Then we obtain an ordering $\sigma_1$ of $V_1$ with a $(\sigma_1,k,2k-1)$-good digraph $D_1\subseteq T[V_1]$ such that $|E(D_1)|\leq k|V_1| - k$. 
We also consider a digraph $T[V'_1]- E_0$. Since $\delta(T[V'_1]- E_0) \geq |V'_1|-3$, we can apply Lemma \ref{main lemma} to $T[V'_1]-E_0$ with parameters $2,(k-1)$ corresponding to $s,k$, respectively. Then we obtain an ordering $\sigma'_1$ of $V'_1$ and a $(\sigma'_1,k-1,2k-1)$-good digraph $D'_1 \subseteq T[V'_1]-E_0$ with $|E(D'_1)| \leq (k-1)|V'_1|+(k-1).$  
Here, it is important to take $(\sigma_1', k-1, 2k-1)$-good subgraph of $T[V_1'] - E_0$ instead of $(\sigma_1', k, 2k-1)$-good subgraph of $T[V_1']$, otherwise we would get $|E(D'_1)|\leq k|V'_1|+k$ which is too much for our purpose.

Now we define $W_1^-$ and $W_1^+$ as follows.
$$W_1^-:= \sigma_1(1,2k-1) \cup \sigma'_1(1,2k -1 )\text{ and } W_1^+:= \sigma_1(|V_1|-2k+1,|V_1|)\cup \sigma'_1(|V'_1| - 2k+1,|V'_1|)$$
This gives
\begin{align}\label{W1 size}
|W_1^-| , |W_1^+| \leq 4k-2.
\end{align}
For each vertex $u \in W_1^-$ we use Claim~\ref{in-AB} to obtain a $k$-fan $\left \{P^-(u,1),\dots,P^-(u,k) \right \}$ in $T$ from $V\setminus(A\cup B)$ to $u$ with 
\begin{align}\label{1 in path length}
\sum_{i=1}^{k} |E(P^-(u,i))| \leq 70k\log(k+1).
\end{align}
For each vertex $u\in W_1^+$, we use Claim~\ref{out-AB} to obtain a $k$-fan $\left \{P^+(u,1),\dots,P^+(u,k) \right \}$ in $T$ from $u$ to $V\setminus (A\cup B)$ with 
\begin{align}\label{1 out path length}
\sum_{i=1}^{k} |E(P^+(u,i))| \leq 100k\log(k+1).
\end{align}
Let 
\begin{align}
E_1 := E(D_1)\cup E(D'_1) \cup \bigcup_{u\in W_1^-, i\in [k]} E(P^-(u,i)) \cup \bigcup_{u\in W_1^+, i\in [k]} E(P^+(u,i)),
\end{align}
\begin{align*}
V_2 := V(E_1)\setminus (V_1\cup V_1').
\end{align*}
Since $V_1 \cup V_1' = A \cup B$, every vertex in $V_2$ is either one of the last vertices of $P^+(u,i)$ for some $i\in [k]$ and $u \in W_1^+$ or one of the first vertex of $P^-(u,i)$ for some $i\in [k]$ and $u\in W_1^-$. Thus we have
$|V_2| \leq k(|W_1^+|+|W_1^-|) \stackrel{\eqref{W1 size}}{\leq} 8k^2 $. Moreover,
\begin{eqnarray*}
|E_1| & \stackrel{\eqref{1 in path length},\eqref{1 out path length}}{\leq}& |E(D_1)|+|E(D_2)| + 70k\log(k+1)|W_1^-| + 100k\log(k+1)|W_1^+| \\
& \stackrel{\eqref{W1 size}}{\leq}& k|V_1|+ (k-1)|V'_1| + 680 k^2\log(k+1).
\end{eqnarray*}

This proves $({\rm E}1)_1$. To prove $({\rm E}1)_2$, let $S$ be a set of $k-1$ vertices in $V$ and let $v$ be a vertex with $v \in (V_1 \cup V_1 ') \setminus S$. We consider the following two cases. \newline

\noindent {\bf Case 1}. $v\in V_1$. 

 By Claim \ref{connectivity claim} and the fact that $D_1$ is $(\sigma_1,k,2k-1)$-good, we can find a path $P'$ from $v$ to a vertex $u\in W_1^+$ in $T-S$ such that $E(P') \subseteq E_1$. Also $P^+(u,1),\dots, P^+(u,k)$ are disjoint paths except the common starting vertex $u \notin S$, thus there exists $j\in [k]$ such that $P^+(u,j)$ does not intersect with $S$. 
Then $E(P')\cup E(P^+(u,j))$ contains a path $P$ in $T-S$ from $v$ to $V_2$ with $E(P)\subseteq E_1$. \newline

\noindent {\bf Case 2}. $v\in V'_1$.

 Assume $\sigma'_1=(v'_1,\dots, v'_{|V'_1|}).$ We consider the maximum index $i$ such that there is a path $P'$ from $v$ to $v_i'$ in $D'_1 - S$. If $i\geq |V'_1|-2k+2$, then we have $v'_i \in W^+_1$ and we can choose $j\in [k]$ such that $P^+ (v'_i , j)$ does not intersect with $S$. Then $E(P')\cup E(P^+(v'_i,j))$ contains a path $P$ in $T-S$ from $v$ to $V_2$ with $E(P)\subseteq E_1$.
If $i<|V'_1|-2k+2$, then the maximality of $i$ implies $N^+_{D'_1}(v'_i) \subseteq S$ by (D1) and the fact that $D'_1$ is $(\sigma'_1,k-1,2k-1)$-good. Since 
$$k-1 \stackrel{({\rm D}2)}{\leq} |N^+_{D'_1}(v'_i)|\leq |S|= k-1,$$ we have
\begin{align}\label{S N+}
S=N^+_{D'_1}(v'_i).
\end{align} 

 By \eqref{V1 def} and the fact that $v'_i \in V'_1$, there exists $s\in [k]$ such that $v'_i \in V^{\rm int}(P_s)$. We let $P''$ be the sub-path of $P_s$ from $v'_i$ to $b_{j_s}$. Since $P_s$ is backwards-transitive, every vertex in $V(P'')$ belongs to $N^-_T(v'_i)$ except the first vertex $v'_i$ and the second vertex, say $u'$, of $P''$. Since $\overrightarrow{v'_iu'}\in E(P_s) \subseteq E_0$ and $D_1'\subseteq T[V'_1]-E_0$, we obtain $\overrightarrow{v'_iu'}\notin E(D'_1)$. Thus $u'\notin N^+_{D'_1}(v'_i)$. 
This with the fact that $V(P'')\subseteq N^-_T(v'_i)\cup \{v'_i , u'\}$ implies that
 $$V(P'') \cap S \subseteq (N^-_T(v'_i)\cup \{v'_i , u'\})\cap S \stackrel{\eqref{S N+}}{=} (N^-_T(v'_i) \cup \{v'_i , u'\})\cap N^+_{D'_1}(v'_i) = \emptyset.$$
 Thus $P''$ does not intersect with $S$. Since $b_{j_s}\in V_1$, Case 1 implies that there exists a path $P^*$ from $b_{j_s}$ to $V_2$ in $T[V\setminus S]$ with $E(P^* )\subseteq E_1$. Then $E(P')\cup E(P'')\cup E(P^*)$ contains a path $P$ in $T-S$ from $v$ to $V_2$ with $E(P)\subseteq E_0\cup E_1$. Thus we have $({\rm E}1)_2$. We can prove $({\rm E}1)_3$ in a similar way. This proves Claim~\ref{E1}. 
\end{proof}

\begin{claim}\label{E2}
There exist a set of arcs $E_2 \subseteq E(T)$ and two sets $W_2^+, W_2^- \subseteq V_2$ satisfying the following.
\begin{enumerate}
\item[$({\rm E}2)_1$] $|E_2|\leq k|V_2| - k $ and $  |W_2^+|,|W_2^-| \leq 2k-1$.
\item[$({\rm E}2)_2$] For a set $S\subseteq V(T)$ of size $k-1$ and a vertex $v\in V_2\setminus S$, there exists a path $P$ in $T-S$ from $v$ to $W_2^+$ with $E(P)\subseteq E_2.$
\item[$({\rm E}2)_3$] For a set $S\subseteq V(T)$ of size $k-1$ and a vertex $v\in V_2\setminus S$, there exists a path $P$ in $T-S$ from $W_2^-$ to $v$ with $E(P)\subseteq E_2.$
\end{enumerate}
\end{claim}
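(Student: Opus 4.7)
The claim is a direct packaging of Lemma~\ref{main lemma} together with Claim~\ref{connectivity claim}, applied to the auxiliary set $V_2$ produced in Claim~\ref{E1}. Since $T[V_2]$ is an oriented graph on $|V_2|$ vertices with $\delta(T[V_2]) = |V_2|-1$, I would invoke Lemma~\ref{main lemma} with $s=0$ and $k$ corresponding to the parameters $s,k$ in that lemma. This yields an ordering $\sigma_2$ of $V_2$ together with a $(\sigma_2,k,2k-1)$-good spanning subgraph $D_2 \subseteq T[V_2]$ satisfying $|E(D_2)| \leq k|V_2|-k$. I then set
$$E_2 := E(D_2), \qquad W_2^+ := \sigma_2(|V_2|-2k+2,|V_2|), \qquad W_2^- := \sigma_2(1, 2k-1),$$
so that $|W_2^+|, |W_2^-| \leq 2k-1$ and $|E_2| \leq k|V_2|-k$, giving $({\rm E}2)_1$.

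For $({\rm E}2)_2$ and $({\rm E}2)_3$, I apply Claim~\ref{connectivity claim} to $D_2$ with $t=2k-1 \geq k$. Given $S \subseteq V(T)$ of size $k-1$ and $v \in V_2 \setminus S$, I consider $S' := S \cap V_2$ (padding $S'$ with arbitrary elements of $V_2 \setminus \{v\}$ up to size $k-1$ if necessary, which only strengthens the conclusion). Claim~\ref{connectivity claim} then supplies a path in $D_2 - S'$ from $v$ to $\sigma_2(|V_2|-2k+2,|V_2|) = W_2^+$, as well as a path in $D_2 - S'$ from $W_2^- = \sigma_2(1,2k-1)$ to $v$. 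Because $E_2 = E(D_2) \subseteq E(T)$ and all such paths remain inside $V_2 \setminus S$, each is simultaneously a path in $T-S$ whose arcs lie in $E_2$.

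\textbf{Potential obstacle.} There is essentially no real obstacle; the only mildly delicate case is the degenerate regime $|V_2| \leq 2k-1$ (which certainly occurs since a priori $|V_2|$ could be very small, or even $0$). In this regime Lemma~\ref{main lemma} returns an empty digraph under an arbitrary ordering, and I take $E_2 = \emptyset$ and $W_2^+ = W_2^- = V_2$ (so $|W_2^\pm| \leq 2k-1$ holds trivially); conditions $({\rm E}2)_2$ and $({\rm E}2)_3$ are then witnessed by the trivial one-vertex path at $v$, since $v \in V_2 \setminus S = W_2^\pm \setminus S$. Thus the claim reduces cleanly to the two tools already proved in Section~3, with no further work required.
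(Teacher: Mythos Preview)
Your proposal is correct and follows essentially the same approach as the paper: apply Lemma~\ref{main lemma} to $T[V_2]$ with $s=0$, set $E_2=E(D_2)$ and $W_2^\pm$ to be the appropriate initial/final segments of $\sigma_2$, and invoke Claim~\ref{connectivity claim}. Your extra care with the padding of $S\cap V_2$ and the degenerate case $|V_2|\leq 2k-1$ is not strictly necessary (both are absorbed by the conventions in the definition of $\sigma(a,b)$ and the proof of Claim~\ref{connectivity claim}), but it does no harm.
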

\begin{proof}[Proof of Claim~\ref{E2}]
We apply Lemma~\ref{main lemma} to $T[V_2]$ with parameters $0,k$ corresponding to $s,k$, respectively. Then we obtain an ordering $\sigma_2$ of $V_2$ and a $(\sigma_2,k,2k-1)$-good digraph $D_2\subseteq T[V_2]$ such that $|E(D_2)|\leq k|V_2|-k$. Let 
$$E_2:= E(D_2), \enspace W_2^-:= \sigma_1(1,2k-1) ~\text{ and }~ W_2^+:= \sigma_1(|V_2|-2k+2,|V_2|),$$ then we have $|E_2| = |E(D_2)| \leq k|V_2| - k$ and $|W_2^-|,|W_2^+| \leq  2k-1$. Hence we have (E2)$_1$. Since $D_2$ is $(\sigma_2,k,2k-1)$-good, Claim \ref{connectivity claim} implies that for any set $S$ of $k-1$ vertices in $V$ and a vertex $v\in V_2 \setminus S$, we can find a path $P$ in $T-S$ from $v$ to $W_2^+$ and a path $P'$ in $T-S$ from $W_2^-$ to $v$ such that $E(P), E(P')\subseteq E_2$, proving (E2)$_2$ and (E2)$_3$.
\end{proof}

Now we define $V_3, V_4$ as follows.
\begin{align}\label{V3 def}
V_3:= \bigcup_{i=1}^{k} V^{\rm int}(P_i) \setminus (V'_1\cup V_2) \enspace \text{ and } \enspace V_4:= V\setminus(V_1\cup V'_1 \cup V_2\cup V_3).
\end{align}

\begin{claim}\label{E3}
There exist a set of arcs $E_3 \subseteq E(T)$ and two sets $W_3^+, W_3^- \subseteq V_3$ satisfying the following.
\begin{enumerate}[{\rm (i)}]
\item[$({\rm E}3)_1$]  $|E_3|\leq (k-1)|V_3| + (k-1) $ and $|W_3^+|,|W_3^-| \leq 2k-1$.
\item[$({\rm E}3)_2$]   For a set $S\subseteq V(T)$ of size $k-1$ and a vertex $v\in V_3\setminus S$, there exists a path $P$ in $T-S$ from $v$ to $W_3^+\cup V_1$ with $E(P)\subseteq E_0\cup E_3.$
\item[$({\rm E}3)_3$]   For a set $S\subseteq V(T)$ of size $k-1$ and a vertex $v\in V_3\setminus S$, there exists a path $P$ in $T-S$  from $W_3^-\cup V_1$ to $v$ with $E(P)\subseteq E_0\cup E_3.$
\end{enumerate}
\end{claim}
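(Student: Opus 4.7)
The proof parallels Case 2 of Claim~\ref{E1}: every vertex of $V_3$ lies on one of the paths $P_t$, which already supplies one `free' route to $V_1$ through $E_0$, so the sparse structure inside $V_3$ only needs to contribute $k-1$ additional paths.

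The plan is to apply Lemma~\ref{main lemma} to the oriented graph $T[V_3] - E_0$ with parameters $s=2$ and $k-1$ (in place of $s$ and $k$). Each $v \in V_3$ is an internal vertex of exactly one $P_t$, so at most two arcs of $E_0$ are incident to $v$; hence $\delta(T[V_3] - E_0) \geq |V_3| - 3$. The lemma then produces an ordering $\sigma_3$ of $V_3$ and a $(\sigma_3, k-1, 2k-1)$-good spanning subgraph $D_3$ of $T[V_3] - E_0$ with $|E(D_3)| \leq (k-1)|V_3| + (k-1)$. I would then set
\[
E_3 := E(D_3), \qquad W_3^- := \sigma_3(1, 2k-1), \qquad W_3^+ := \sigma_3(|V_3| - 2k + 2, |V_3|),
\]
which yields $({\rm E}3)_1$ immediately.

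For $({\rm E}3)_2$, given a $(k-1)$-set $S$ and $v \in V_3 \setminus S$, take a path in $D_3 - S$ starting at $v$ and ending at $v_i := \sigma_3(i)$ with $i$ maximal. If $i \geq |V_3| - 2k + 2$ then $v_i \in W_3^+$ and we are done. Otherwise (D1), (D2), and the maximality of $i$ force $N^+_{D_3}(v_i) \subseteq S$, and since $|N^+_{D_3}(v_i)| \geq k - 1 = |S|$ we must have $S = N^+_{D_3}(v_i)$. Because $v_i \in V_3$, it is an internal vertex of some $P_t$; let $P''$ be the sub-path of $P_t$ from $v_i$ to $b_{j_t} \in V_1$. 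Backwards-transitivity of $P_t$ places every vertex of $P''$ other than $v_i$ and its $P_t$-successor $u'$ in $N^-_T(v_i)$, hence outside $N^+_T(v_i) \supseteq S$; and $u' \notin S$ because the arc $\overrightarrow{v_i u'} \in E_0$ is not present in $T[V_3] - E_0 \supseteq D_3$, so $u' \notin N^+_{D_3}(v_i) = S$. Concatenation yields the required path in $T - S$ from $v$ to $V_1 \subseteq W_3^+ \cup V_1$ using arcs of $E_0 \cup E_3$. The proof of $({\rm E}3)_3$ is entirely symmetric, with $a_{i_t}$ replacing $b_{j_t}$ and with a path extended backwards from $v$ to the least-index $v_i$ reachable.

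The one genuinely non-obvious choice is invoking Lemma~\ref{main lemma} with parameters $(s, k-1) = (2, k-1)$ rather than $(0, k)$: the arc-budget $(k-1)|V_3| + (k-1)$ demanded by $({\rm E}3)_1$ is too tight for a $k$-connected sparse structure on $V_3$, and the backwards-transitive geometry of the $P_t$'s is precisely what restores the missing $k$-th path when the $(k-1)$-connected structure is blocked.
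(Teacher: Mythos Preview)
Your proof is correct and follows the paper's argument essentially verbatim: apply Lemma~\ref{main lemma} to $T[V_3]-E_0$ with parameters $(s,k)=(2,k-1)$, set $E_3=E(D_3)$ and $W_3^\pm$ to be the first/last $2k-1$ vertices of $\sigma_3$, and when the $(k-1)$-good structure is exhausted use backwards-transitivity of the containing $P_t$ together with $S=N^+_{D_3}(v_i)$ to escape to $b_{j_t}\in V_1$ along $E_0$. Your closing remark about why $(2,k-1)$ is the right choice of parameters is exactly the point the paper emphasizes as well.
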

\begin{proof}[Proof of Claim~\ref{E3}]
Consider a digraph $T[V_3]- E_0$. Note that $\delta(T[V_3]- E_0) \geq |V_3|-3$. Apply Lemma~\ref{main lemma} to $T[V_3]-E_0$ with parameters $2,k-1$ corresponding to $s,k$, respectively. Then we obtain an ordering $\sigma_3 = (v_1,\dots, v_{|V_3|})$ and a $(\sigma_3,k-1,2k-1)$-good digraph $D_3\subseteq T[V_3]-E_0$ with $|E(D_3)| \leq (k-1)|V_3|+ (k-1)$.
Here, it is important to take $(\sigma_3, k-1, 2k-1)$-good subgraph of $T[V_3] - E_0$ instead of $(\sigma_3, k, 2k-1)$-good subgraph of $T[V_3]$, otherwise we would get $|E(D_3)|\leq k|V_3|-k$ instead of $({\rm E}3)_1$.

Let $$E_3:= E(D_3), \enspace W_3^-:= \sigma_3(1,2k-1)~ \text{ and }~ W_3^+:= \sigma_3(|V_3|-2k+2,|V_3|).$$ 
This verifies $({\rm E}3)_1$. To verify $({\rm E}3)_2$, we consider a set $S\subseteq V(T)$ with $k-1$ vertices and a vertex $v\in V_3 \setminus S$. Then we consider a path $P'$ in $D_3-S$ with $E(P')\subseteq E(D_3)$ from $v$ to $v_i$ which maximizes $i$. If $i\geq |V_3|-2k+2$, then $v_i \in W^+_3$ and we are done. If $i < |V_3|-2k+2$, the maximality of $i$ implies $N_{D_3}^+(v_i)\subseteq S$ by (D1) and the fact that $D_3$ is $(\sigma,k-1,2k-1)$-good.  Since $$k-1 \stackrel{({\rm D}2)}{\leq} |N^+_{D_3}(v_i)| \leq |S| =k-1,$$ we have $S=N^+_{D_3}(v_i)$. 
Because $v_i \in V_3$, by \eqref{V3 def} there exists $s\in [3k-1]$ such that $v_i \in V^{\rm int}(P_s)$. We let $P''$ be the sub-path of $P_s$ from $v_i$ to $b_{j_s}$. Since $P_s$ is backwards-transitive, every vertex in $V(P'')$ should be in $N^-_T(v_i)$ except $v_i$ and the second vertex, say $u'$, of $P''$.
Since $\overrightarrow{v_iu'}\in E_0$ and $E(D_3)\subseteq T[V_3]-E_0$, $u'\notin N^+_{D_3}(v_i)$. Thus 
$$V(P'')\cap S \subseteq  (N^-_{T}(v_i)\cup \{v_i , u'\} ) \cap N^+_{D_3}(v_i) =\emptyset.$$ 
Thus $P''$ does not intersect with $S$. So $E(P')\cup E(P'')$ contains a path $P$ in $T-S$ from $v$ to $V_1$ with $E(P)\subseteq E_0 \cup E_3$. This proves $({\rm E}3)_2$. We can prove $({\rm E}3)_3$ in a similar way. This proves Claim~\ref{E3}.
\end{proof}

\begin{claim}\label{E4}
There exist a set of arcs $E_4 \subseteq A(T)$ and two sets $W_4^+, W_4^- \subseteq V_4$ satisfying the following.
\begin{enumerate}
\item[$({\rm E}4)_1$] $|E_4|\leq k|V_4| - k $ and $  |W_4^+|,|W_4^-| \leq 2k-1$.
\item[$({\rm E}4)_2$] For a set $S\subseteq V(T)$ of size $k-1$ and a vertex $v\in V_4\setminus S$, there exists a path $P$ in $T-S$ from $v$ to $W_4^+$ with $E(P)\subseteq E_4$.
\item[$({\rm E}4)_3$] For a set $S\subseteq V(T)$ of size $k-1$ and a vertex $v\in V_4\setminus S$, there exists a path $P$ in $T-S$ from $W_4^-$ to $v$ with $E(P)\subseteq E_4$.
\end{enumerate}
\end{claim}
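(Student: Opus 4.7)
The plan is to prove Claim~\ref{E4} by a direct application of Lemma~\ref{main lemma} to $T[V_4]$, essentially copying the argument of Claim~\ref{E2} verbatim, since $V_4$ is defined to contain no vertex of the backwards-transitive paths $P_1,\dots,P_k$ (those internal vertices all lie in $V_1'\cup V_3$), so none of the complications that arose for Claim~\ref{E3} or the $V_1'$ case of Claim~\ref{E1} appear here. In particular, no arc of $E_0$ lies inside $T[V_4]$, which means we do not need to pass to $T[V_4]-E_0$ or invoke the $(k-1)$-variant of Lemma~\ref{main lemma}; the straightforward $(k,2k-1)$-good version is available.

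Concretely, I would first apply Lemma~\ref{main lemma} to the oriented graph $T[V_4]$ with parameters $s=0$ and $k$ corresponding to $s,k$. Since $\delta(T[V_4])\geq |V_4|-1$ (it is a tournament), the lemma supplies an ordering $\sigma_4$ of $V_4$ together with a $(\sigma_4,k,2k-1)$-good spanning subgraph $D_4\subseteq T[V_4]$ satisfying $|E(D_4)|\leq k|V_4|-k$. I then set
\[
E_4 := E(D_4), \qquad W_4^- := \sigma_4(1,2k-1), \qquad W_4^+ := \sigma_4(|V_4|-2k+2,|V_4|).
\]
The size bounds $|E_4|\leq k|V_4|-k$ and $|W_4^+|,|W_4^-|\leq 2k-1$ are immediate, which gives $({\rm E}4)_1$. (If $|V_4|<2k$ then $\sigma_4$ is arbitrary, $D_4$ is edgeless, and $W_4^\pm=V_4$, and everything below still goes through trivially.)

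For $({\rm E}4)_2$ and $({\rm E}4)_3$, let $S\subseteq V(T)$ with $|S|=k-1$ and $v\in V_4\setminus S$. Since $D_4$ is $(\sigma_4,k,2k-1)$-good on the vertex set $V_4$, Claim~\ref{connectivity claim} applied to $D_4$ (with $t=2k-1\geq k$) produces a path from $v$ to $\sigma_4(|V_4|-2k+2,|V_4|)=W_4^+$ in $D_4-S\subseteq T-S$, and a path from $W_4^-=\sigma_4(1,2k-1)$ to $v$ in $D_4-S\subseteq T-S$, both using only arcs of $E(D_4)=E_4$. This yields $({\rm E}4)_2$ and $({\rm E}4)_3$.

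There is essentially no obstacle: the only thing to verify is that nothing in $V_4$ forces us to avoid $E_0$ (as was needed in Claims~\ref{E1} and \ref{E3}), and this is ensured by the definition $V_4 = V\setminus(V_1\cup V_1'\cup V_2\cup V_3)$ together with the fact that every internal vertex of every $P_s$ lies in $V_1'\cup V_3$ by \eqref{V1 def} and \eqref{V3 def}. Consequently, $E_0\cap E(T[V_4])=\emptyset$, so the $(\sigma_4,k,2k-1)$-good subgraph obtained from Lemma~\ref{main lemma} on $T[V_4]$ alone is good enough, and we get the cleaner bound $k|V_4|-k$ required in $({\rm E}4)_1$ rather than the larger bound that would arise from the $(k-1)$-variant.
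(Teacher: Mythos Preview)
Your proposal is correct and essentially identical to the paper's own proof: both apply Lemma~\ref{main lemma} to $T[V_4]$ with $s=0$, set $E_4=E(D_4)$, $W_4^-=\sigma_4(1,2k-1)$, $W_4^+=\sigma_4(|V_4|-2k+2,|V_4|)$, and then invoke Claim~\ref{connectivity claim}. Your additional remarks about why the $E_0$-avoidance is unnecessary here are accurate but not required for the argument.
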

\begin{proof}[Proof of Claim~\ref{E4}]
We apply Lemma~\ref{main lemma} to $T[V_4]$ with parameters $0,k$ corresponding to $s,k$, respectively. Then we obtain an ordering $\sigma_4$ and a $(\sigma_4,k,2k-1)$-good digraph $D_4\subseteq T[V_4]$ with $|E(D_4)|\leq k|V_4| - k$. 
Let $$E_4:= E(D_4),\enspace W_4^+:= \sigma_4(|V_4|-2k+2,|V_4|) \text{ and } W_4^-:= \sigma_4(1,2k-1),$$ then we have $|E_4| = |E(D_4)| \leq k|V_4| - k$, $|W_4^-| \leq 2k-1$ and $|W_4^+| \leq 2k-1$. Hence $({\rm E}4)_1$ holds. 
By Claim~\ref{connectivity claim}, for any $S \subseteq V(T)$ of $k-1$ vertices and $v\in V_4 \setminus S$, we can find a path $P$ in $T[V_4]\setminus S$ from $v$ to $W_4^+$ and a path $P'$ in $T[V_4]\setminus S$ from $W_4^-$ to $v$. This proves $({\rm E}4)_2$ and $({\rm E}4)_3$.  This proves Claim~\ref{E4}.
\end{proof}

We define $W^+$ and $W^-$ as follows. 
$$W^+:=W_2^+ \cup W_3^+ \cup W_4^+ \enspace \text{ and }\enspace W^- := W_2^- \cup W_3^- \cup W_4^-.$$ 
Note that $W^+, W^- \subseteq V\setminus (A\cup B)$. Thus $A$ in-dominates $W^+$ and $B$ out-dominates $W^-$. Now we take $E_5$ as follows to make connections from $W^+$ to $\{a_{i_1},\dots, a_{i_k}\}$ and from $\{b_{j_1},\dots, b_{j_k}\}$ to $W^-$. 

\begin{claim}\label{E5}
There exists a set of arcs $E_5\subseteq E(T)$ satisfying the following.
\begin{enumerate}[{\rm (i)}]
\item[$({\rm E}5)_1$]  $|E_5|\leq 81k^2$
\item[$({\rm E}5)_2$]  For $t\in [k]$, a vertex $v\in W^+$ and a set $S \subseteq V(T)\setminus \{a_{i_t},v\}$ of at most $k-1$ vertices, there exists a path $P(v,t)$ in $T-S$ from $v$ to $a_{i_t}$ such that $E(P(v,t))\subseteq E_5$.
\item[$({\rm E}5)_3$]  For $t\in [k]$, a vertex $v\in W^-$ and a set $S \subseteq V(T)\setminus \{b_{j_t},v\}$ of at most $k-1$ vertices, there exists a path $Q(v,t)$ in $T-S$ from $b_{j_t}$ to $v$ such that $E(Q(v,t))\subseteq E_5$.
\end{enumerate}
\end{claim}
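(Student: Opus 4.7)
The plan is to build $E_5$ as the union of two symmetric pieces handling (E5)$_2$ and (E5)$_3$ respectively. The key observation is that we already possess ample structure inside $A$ and $B$: by (A3) each $A_j$ in-dominates $V \setminus (A \cup B) \supseteq W^+$; by (A2) each $T[A_j]$ is a transitive tournament with sink $a_j$; and by choice each $a_{i_t}$ has in-degree at least $k$ inside $T[A_{\mathrm{sink}}]$, with the mirror statements for $B$, $b_j$ and $b_{j_1}, \dots, b_{j_k}$.

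For (E5)$_2$, I would proceed as follows. For every $v \in W^+$ and every $j \in [3k-1]$, (A3) supplies an out-neighbour $c_j^v \in A_j$ of $v$; add $\overrightarrow{v c_j^v}$ to $E_5$, and if $c_j^v \neq a_j$ also add $\overrightarrow{c_j^v a_j}$, which lies in $E(T)$ since $a_j$ is the sink of transitive $T[A_j]$. Because $A_1, \dots, A_{3k-1}$ are pairwise disjoint, the resulting $3k-1$ paths of length at most two from $v$ to $a_1, \dots, a_{3k-1}$ are pairwise internally vertex-disjoint. Next, for each $t \in [k]$, pick a set $N_t \subseteq A_{\mathrm{sink}} \setminus \{a_{i_t}\}$ of $k$ in-neighbours of $a_{i_t}$ in $T[A_{\mathrm{sink}}]$ and add the $k$ arcs $\overrightarrow{a a_{i_t}}$ for $a \in N_t$. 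Then, for each $v \in W^+$ and $t \in [k]$, concatenating the paths $v \to c_j^v \to a_j$ (for $a_j \in N_t$) with the arcs $\overrightarrow{a_j a_{i_t}}$ yields $k$ internally vertex-disjoint paths in the $E_5$-subgraph from $v$ to $a_{i_t}$; any set $S$ of size at most $k-1$ blocks at most $k-1$ of them, so one survives, giving (E5)$_2$.

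The construction for (E5)$_3$ is the exact mirror image, using $B$ in place of $A$, $d_j^v \in B_j$ with $\overrightarrow{d_j^v v} \in E(T)$ from (B3), and $\overrightarrow{b_j d_j^v} \in E(T)$ from transitivity of $T[B_j]$ with source $b_j$ in (B2); a set $M_t \subseteq B_{\mathrm{source}} \setminus \{b_{j_t}\}$ of $k$ out-neighbours of $b_{j_t}$ in $T[B_{\mathrm{source}}]$ plays the role of $N_t$, giving $k$ internally vertex-disjoint paths from $b_{j_t}$ to $v$ in the $E_5$-subgraph.

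Finally, for the cardinality, each vertex of $W^+ \cup W^-$ contributes at most $2(3k-1)$ arcs; by Claims~\ref{E2}, \ref{E3}, \ref{E4} we have $|W^+|, |W^-| \leq 3(2k-1) \leq 6k$; and the arcs added inside $T[A_{\mathrm{sink}}]$ and $T[B_{\mathrm{source}}]$ contribute at most $k^2$ each. Hence
\[ |E_5| \leq 2 \cdot 6k \cdot 2(3k-1) + 2k^2 \leq 74 k^2 \leq 81 k^2, \]
giving (E5)$_1$. There is no serious obstacle here; the only minor point requiring a brief check is that the $k$ paths remain internally vertex-disjoint in the (harmless) degenerate case $c_j^v = a_j$, which is immediate from disjointness of the $A_j$'s together with $j \neq i_t$ for $a_j \in N_t$.
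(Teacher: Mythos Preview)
Your proposal is correct and follows essentially the same approach as the paper: both use (A3) and (A2) to build, for each $v\in W^+$ and each $j\in[3k-1]$, a path of length at most two from $v$ through $A_j$ to $a_j$, and then use the $k$ in-neighbours of $a_{i_t}$ in $A_{\rm sink}$ together with the disjointness of the $A_j$'s to avoid any $(k-1)$-set $S$, with the symmetric construction for (E5)$_3$. The only cosmetic difference is that the paper simply throws all of $E(T[A_{\rm sink}])\cup E(T[B_{\rm source}])$ into $E_5$, whereas you add only the $k$ chosen in-arcs per $a_{i_t}$ (and out-arcs per $b_{j_t}$), yielding a slightly smaller but equally valid bound.
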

\begin{proof}[Proof of Claim~\ref{E5}]
 By (A2) and (A3), for each $u\in W^+$ and $s\in [3k-1]$ there exists $c_{u,s} \in N^+_T(u) \cap A_s$ such that $c_{u,s}=a_{s}$ or $a_{s}\in N^+_T(c_{u,s})$. Let
$$P(u,s):= \left\{ \begin{array}{ll} (u,c_{u,s},a_s) & \text{ if } c_{u,s}\neq a_s, \\
(u,a_s) &\text{ otherwise.} \end{array}\right.
$$
Similarly, for $u\in W^-$ and $s\in [3k-1]$, there is a path $Q(u,s)$ from $b_s$ to $u$ with length at most $2$ lying entirely in $B_s\cup \{u\}$.
Let $$ E_5 := E(T[A_{\rm sink}]) \cup E(T[B_{\rm source}]) \cup \bigcup_{u\in W^+}\bigcup_{s=1}^{3k-1} E(P(u,s)) \cup \bigcup_{u\in W^-}\bigcup_{s=1}^{3k-1} E(Q(u,s)).$$
Then we have
\begin{align*}
|E_5| &\leq |E(T[A_{\rm sink}])| + |E(T[B_{\rm source}])|+ \sum_{u\in W^+}\sum_{s=1}^{3k-1} |E(P(u,s))| + \sum_{u\in W^-}\sum_{s=1}^{3k-1} |E(Q(u,s))|  \\
&\leq {{3k-1}\choose {2}} + {{3k-1}\choose{2}} + (6k-2)|W^+| + (6k-2)|W^-| \leq 81k^2.
\end{align*}

We get the final inequality from $({\rm E}2)_1$, $({\rm E}3)_1$ and $({\rm E}4)_1$. To verify $({\rm E}5)_2$, consider a set $S$ of $k-1$ vertices and an index $t\in [k]$ such 	that $a_{i_t}\notin S$ and a vertex $v\in W^+\setminus S$. Recall that $a_{i_t}$ has at least $k$ in-neighbors in $A_{\rm sink}$ as defined before Claim~\ref{in-AB}. This together with the fact that $A_1,\dots, A_{3k-1}$ are pairwise disjoint implies that there exists an index $s\in [3k-1]$ such that $a_s \in N^-_{T}(a_{i_t})$ and $A_s\cap S=\emptyset$. Then $P(v,s)\cup \overrightarrow{a_s a_{i_t}}$ contains a path $P$ from $v$ to $a_{i_t}$, where $P$ does not intersect with $S$ because $P$ is contained in $A_{s}\cup \{v\} \cup \{a_{i_t}\}$. Also $E(P)\subseteq E_5$, this proves $({\rm E}5)_2$. We can also prove $({\rm E}5)_3$ similarly. This proves Claim~\ref{E5}.
\end{proof}

Now we define the desired spanning strongly $k$-connected digraph $D\subseteq T$. Let 
$$V(D):=V(T)\enspace \text{ and }\enspace E(D):=E_0\cup E_1\cup E_2\cup E_3\cup E_4\cup E_5.$$ 
Because $\bigcup_{s=1}^{k} V^{\rm int}(P_s) \subseteq V'_1\cup V_2\cup V_3$, we have $|E_0| \leq |V'_1|+|V_2|+|V_3|-k$.
By $({\rm E}1)_1$, $({\rm E}2)_1$, $({\rm E}3)_1$, $({\rm E}4)_1$ and  $({\rm E}5)_1$ we have
\begin{eqnarray*}
|E(D)| &\leq& |E_0|+ |E_1|+|E_2|+|E_3|+|E_4|+|E_5| \\
 &\leq &(|V'_1|+|V_2|+|V_3|-k) +(k|V_1|+(k-1)|V'_1| +  680 k^2\log(k+1)) + (k|V_2|-k)  \\ 
&  &  + ((k-1)|V_3|+(k-1)) + (k|V_4|-k) + 81k^2 \\
&\leq& k(|V_1|+|V'_1|+|V_2|+|V_3|+|V_4|) + |V_2| + 740 k^2 \log(k+1) \\
& \stackrel{({\rm E}1)_1}{\leq}& k|V| + 750 k^2\log(k+1)
\end{eqnarray*}
since $680k^2 \log(k+1) + 81k^2 \leq 740k^2 \log (k+1)$ for $k \geq 2$.

Now it suffices to show that $D$ is strongly $k$-connected. For any set $S \subseteq V(T)$ of $k-1$ vertices and any two distinct vertices $u,v \in V(T)\setminus S$, we claim that there is a path from $u$ to $v$ in $D-S$. First of all, since $P_1,\dots, P_k$ are vertex-disjoint there exists $t \in [k]$ such that $V(P_t) \cap S = \emptyset$. We find a path $P$ in $D-S$ from $u$ to $u' \in W^+$ as follows.\newline

\noindent {\bf Case 1}. $u \in V_2\cup V_4$. \newline
There exists a path $P$ in $D-S$ from $u$ to $u' \in W^+$ by $({\rm E}2)_2$ and $({\rm E}4)_2$. \newline

\noindent {\bf Case 2}. $u \in V_1\cup V'_1$.\newline
By $({\rm E}1)_2$, there is a path $Q$ in $D-S$ from $u$ to a vertex $u_0\in V_2$. Also $({\rm E}2)_2$ implies that there is a path $Q'$ in $D-S$ from $u_0$ to $u' \in W^+$. Thus $E(Q)\cup E(Q')$ contains a path $P$ in $D-S$ from $u$ to $u' \in W^+$. \newline

\noindent {\bf Case 3}. $u\in V_3$.\newline
By $({\rm E}3)_2$, there is a path $R$ in $D-S$ from $u$ to a vertex $u_0 \in W^+ \cup V_1$. If $u_0 \in W^+$, then let $u' = u_0$ and $P:=R$. Otherwise, there is a path $R'$ in $D-S$ from $u_0$ to $u' \in W^+$ by Case 2. Thus $E(R)\cup E(R')$ contains a path $P$ in $D-S$ from $u$ to $u' \in W^+$. \newline

Similarly, there is a path $Q$ in $D-S$ from a vertex $v'\in W^-$ to $v$.  By Claim~\ref{E5}, there is a path $P(u',t)$ in $D-S$ from $u'$ to $a_{i_t}$, and a path $Q(v',t)$ in $D-S$ from $b_{j_t}$ to $v'$. Thus $E(P)\cup E(P(u',t))\cup E(P_t)\cup E(Q(v',t))\cup E(Q)$ contains a path in $D-S$ from $u$ to $v$. This proves that $D$ is strongly $k$-connected. \qed

\subsection*{Algorithmic aspect of Theorem~\ref{main theorem}}

The proof of Theorem~\ref{main theorem} is trivially algorithmic up to the following three optimization problems: finding a $k$-fan from a fixed vertex to a set with minimum total length, finding a maximum matching in a bipartite graph, and finding $k$ vertex-disjoint paths between two sets with minimum total length. These optimization problems can be solved in polynomial-time on $n=|V(T)|$ by standard application of algorithms finding maximum-flows and minimum cost flows of digraphs (see~\cite[Chapter 7,8 and 9]{ahuja}). Note that when we apply Lemma~\ref{main lemma}, we use Claim~\ref{claim2} to find the ordering $\sigma$ and a subgraph $D$ in polynomial time on $n$.
With these tools, the proof itself immediately gives a polynomial-time algorithm to find the desired digraph $D$ as in Theorem~\ref{main theorem}.


\section{Acknowledgement}

We are grateful to Deryk Osthus for a careful reading and helpful comments. We thank the referees for a thorough reading and valuable suggestions.

\end{document}